\newtheorem{theorem}{Theorem}[chapter]
\newtheorem{definition}{Definition}[chapter]
\newtheorem{lemma}{Lemma}[chapter]
\newtheorem{corollary}{Corollary}[chapter]
\newtheorem{remark}{Remark}[chapter]
\global\long\def\dive{\mathrm{div}}
\global\long\def\R{\mathbb{R}}
\global\long\def\geom{\mathrm{geom}}
\global\long\def\Span{\mathrm{Span}}
\global\long\def\supp{\mathrm{supp}}
\global\long\def\loc{\mathrm{loc}}
\global\long\def\diame{\mathrm{diam}}
\global\long\def\const{\mathrm{const}}
\global\long\def\coarse{\mathrm{coarse}}
\renewcommand{\thechapter}{\arabic{chapter}}
\titleformat{\chapter}[display]
{\bfseries\LARGE}
{\titlerule
\vspace{1pt}
\titlerule[0.9pt] 
\vspace{1pt}%
\titlerule
\vspace{0.5pc}
\filleft \large{\chaptertitlename}  \large \thechapter}
{2pt}
{
\filcenter
}
[\vspace{4pt}%
\titlerule
\vspace{1pt}%
\titlerule
\vspace{1pt}%
]
\newcommand*{\titleGM}{\begingroup 
\hbox{ 
\hspace*{0.01\textwidth} 
\rule{1pt}{\textheight} 
\hspace*{0.01\textwidth} 
\parbox[b]{1.0\textwidth}{ 

{\noindent\Huge\bfseries Elliptic Equations in High-Contrast Media and Applications}\\[2\baselineskip] 
{\LARGE Ecuaciones El\'ipticas en Medios de Alto Contraste y Aplicaciones}\\[2\baselineskip]
{\large \textit{Master Thesis}}\\[6\baselineskip] 
{\Large \textsc{Leonardo Andr\'es Poveda Cuevas}}\\ 

\vspace{0.3\textheight}
\hspace{1cm}\textbf{\includegraphics[scale=0.15]{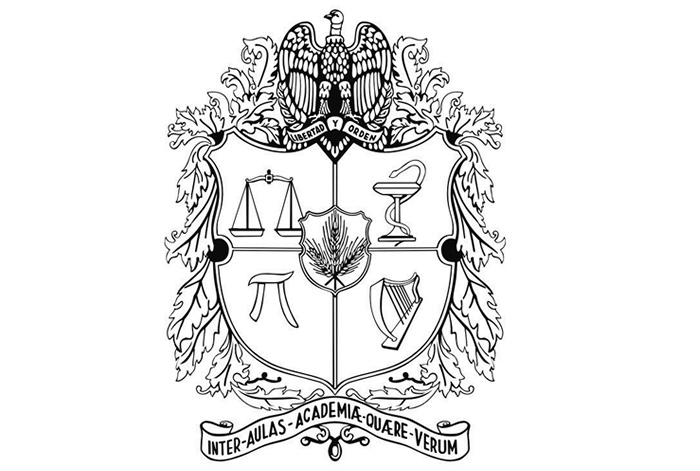}}

{Departamento de Matem\'aticas\\
\bf{Universidad Nacional de Colombia}}\\[\baselineskip] 
}}
\endgroup}
\title{\bfseries Elliptic Equations in High-Contrast Media and Applications}
\author{Leonardo Andr\'es Poveda Cuevas}
\begin{document}

\pagenumbering{roman}\setcounter{page}{1}
\newpage\thispagestyle{empty}  

\titleGM 


\newpage\thispagestyle{empty}\vspace*{1cm}

\begin{center}

\par\end{center}{\large \par}

\newpage\thispagestyle{empty}\vspace*{1cm}

\begin{center}
\textbf{\large{ELLIPTIC EQUATIONS IN HIGH-CONTRAST MEDIA AND APPLICATIONS}}
\par\end{center}{\large \par}
\vspace*{2cm}
\begin{center}
\sc\large{ECUACIONES EL\'IPTICAS EN MEDIOS DE ALTO CONTRASTE Y APLICACIONES}
\par\end{center}{\large \par}

\vfill

\begin{center}
\textbf{LEONARDO ANDR\'ES POVEDA CUEVAS}
\par\end{center}

\begin{center}
\textbf{\vfill }
\par\end{center}

%

\hspace{7cm}%
\begin{minipage}[b]{0.5\columnwidth}%
\begin{singlespace}
Thesis presented to the Graduate Program in Mathematics at the National University of Colombia to obtain the Degree of Master of Science.\\

Adviser: Ph.D. Juan Galvis
\end{singlespace}

\end{minipage}


\vfill
\begin{center}
\textbf{\includegraphics[scale=0.15]{Figures/unal.png}}
\par\end{center}

\begin{center}
\textbf{UNIVERSIDAD NACIONAL DE COLOMBIA}\\
\textbf{FACULTAD DE CIENCIAS}\\
\textbf{DEPARTAMENTO DE MATEM\'ATICAS}\\
\textbf{BOGOT\'A}\\
\textbf{2014}
\par\end{center}

\newpage\thispagestyle{empty}
\vspace{2cm}
\hspace{5cm}%

\begin{flushright}
To my Family.\\
\textsl{``The persistence and dedication are the hope of a new dawn''}
\par\end{flushright}
\hspace{5cm}
\begin{flushright}
To my Girlfriend.\\
\textsl{``Her warm care allow to extend the dream of our live''}
\par\end{flushright}


\newpage\thispagestyle{empty}

\vspace*{4pt}
\begin{center}
\textbf{ACKNOWLEDGMENTS}
\par\end{center}

%

I am thankful to Professor Juan Carlos, my adviser, for his great and patient guidance. I received from him the initial idea, suggestions and scientific conversations for this manuscript. Also I received his example, not only as a scientist but as a person.\\

I am grateful to the most important people in my life: my parents, Aura Rosa and Fredy for their encouragement and financial support; my girlfriend, Paola for her great love and understanding of my new successes, and my brothers Jackson, Alejandro and Laura for their great trust and company at different times of our life.\\

I express my thanks to my uncle Olmer for bearing my presence and for our extraordinary discussions in these last years.\\

I also want to thank to the Departament of Mathematics at National University of Colombia.

\newpage\thispagestyle{empty}

\hspace{9cm}\vspace{12cm}

\hspace{9cm}%
\begin{minipage}[b]{0.35\columnwidth}%
\begin{singlespace}
\lettrine{I}{f nature} were not beautiful, it would not be worth knowing, and if nature were not worth knowing, life would not be worth living.
\end{singlespace}

\begin{flushright}
\texttt{Henri Poincar\'e}
\par\end{flushright}%
\end{minipage}

\clearpage

\pagestyle{fancy}

\pagenumbering{roman}\setcounter{page}{6}

\dominitoc
\tableofcontents

\clearpage\pagenumbering{arabic} \setcounter{page}{10}

\pagestyle{fancy}


\chapter{Introduction}

The mathematical and numerical analysis for partial differential equations in multi-scale and high-contrast media are important  in many practical applications. In fact, in many applications related to fluid flows in porous media the coefficient models the permeability (which represents how easy the porous media let the fluid flow). The values of the  permeability usually, and specially for complicated porous media, vary in several orders of magnitude. We refer to this case as  the high-contrast case  and we say that the corresponding elliptic equation that models (e.g. the pressure) has high-contrast coefficients, see for instance \cite{galvis2014spec, Efendiev2014generalized, efendiev2012coarse}.\\

A fundamental purpose is to understand the effects on the solution related to the variations of high-contrast in the properties of the porous media. In terms of the model, these variations appear in the coefficients of the differential equations. In particular, this interest rises the importance of computations of numerical solutions. For the case of elliptic problems with discontinuous coefficients and bounded contrast, Finite Element Methods and Multiscale methods have been helpful to understand numerically solutions of these problems.\\

In the case  of high-contrast (not necessarily bounded contrast), in order to devise efficient numerical strategies  it is important to understand the behavior of solutions of these equations. Deriving and manipulating asymptotic expansions representing solutions certainly help in this task. The nature of the asymptotic expansion will reveal properties of the solution. For instance, we can compute functionals of solutions and single out its behavior with respect to the contrast or other important parameters.\\

Other important application of elliptic equations correspond to elasticity problems. In particular, linear elasticity equations model the equilibrium and the local deformation of deformable bodies. For instance, an elastic body subdues to local external forces. In this way, we recall these local  forces as strains. The local relationship between strains and deformations is the main problem in the solid mechanics, see  \cite{MR936420,MR1477663,MR0010851,MR0075755}. Under mathematical development these relations are known as Constitutive Laws, which depend on the material, and the process that we want to model.\\

In the case of having composite materials, physical properties such as the Young's modulus can vary in several orders of magnitude. In this setting, having asymptotic series to express solutions it is also useful tool to understand the effect of the high-contrast in the solutions, as well as the interaction among different materials.\\

In this manuscript we study asymptotic expansions of high-contrast elliptic problems. In Chapter \ref{Chapter2} we recall the weak formulation and make the overview of the derivation for high-conductivity inclusions. In addition, we explain in detail the derivation of asymptotic expansions for the one dimensional case with one concentric inclusion, and we develop a particular example. In Chapter \ref{Chapter3} we use the Finite Element Method, which is applied in the numerical computation of terms of the asymptotic expansion. We also present an application to Multiscale Finite Elements, in particular, we numerically design approximation for the term $u_0$ with local harmonic characteristic functions. Chapter \ref{Chapter4} is dedicated to the case of the linear elasticity, where we study the asymptotic problem with one inelastic inclusion and we provide the convergence for this expansion problem. Finally, in Chapter \ref{Chapter5} we state some conclusions and final comments.



\chapter{Asymptotic Expansions for the Pressure Equation}\label{Chapter2}

\minitoc

\section{Introduction}\label{Section2.1}

In this chapter we detail the derivation of asymptotic expansions for high-contrast elliptic problems of the form,
\begin{equation} \label{Strongform}
-\dive(\kappa(x)\nabla u(x))=f(x),\ \mbox{  in }  D,  
\end{equation}
with Dirichlet data defined by $u=g$ on $\partial D$. We assume that $D$ is the disjoint union of a background domain and inclusions, $D=D_0\cup (\bigcup_{m=1}^M \overline{D}_m)$. We assume that $D_0,D_1,\dots,D_M,$ are polygonal domains (or domains with smooth boundaries). We also assume that each $D_m$ is a connected domain, $m=0,1,\dots,M$. Additionally, we assume that $D_m$ is compactly included in the open set $D\setminus 
\bigcup_{\ell=1, \ell \not=m}^M\overline{D}_\ell$, i.e., 
$\overline{D}_m\subset D\setminus \bigcup_{\ell=1, \ell\not=m}^M \overline{D}_\ell$, and we define $D_0:= D\setminus 
\bigcup_{m=1}^M\overline{D}_m$. Let $D_0$ represent the background domain and the sub-domains $\{D_m\}_{m=1}^M$ represent the inclusions. For simplicity of the presentation  we consider only interior inclusions. Other cases can be study similarly.\\

We consider a coefficient with 
multiple high-conductivity inclusions. Let $\kappa$ be defined by 
\begin{equation}\label{eq:coeff1inc-multiple}
\kappa(x)=\left\{\begin{array}{cc} 
\eta,& x\in   D_m, ~~m=1,\dots,M, \\
1,& x\in  D_0=D\setminus \bigcup_{m=1}^M\overline{D}_m.
\end{array}\right.
\end{equation}

We seek to determine $\{u_j\}_{j=0}^\infty\subset 
H^1(D)$ such that
\begin{equation}\label{expansionu_eta}
u_\eta=u_0+\frac{1}{\eta}u_1+\frac{1}{\eta^2}u_2+\dots=
\sum_{j=0}^\infty \eta^{-j} u_{j}, 
\end{equation}
and such that they satisfy the following Dirichlet boundary 
conditions, 
\begin{equation}\label{Boundarycondi}
u_0=g \mbox{ on } \partial D \quad \mbox{ and }
\quad u_{j}=0 \mbox{ on } \partial D \mbox{ for } j\geq 1.
\end{equation}
This work complements current work in the use of similar expansions to design and analyze efficient numerical approximations of high-contrast elliptic equations; see \cite{Efendiev2014generalized,MR2477579}.\\

We have the following weak formulation of problem \eqref{Strongform}: find $u\in H^{1}(D)$ such that 
\begin{equation}\label{eq:problem}
\left\{\begin{array}{ll}
{\cal A}(u,v)={\cal F}(v), & \mbox{ for all } v\in H_0^1(D),\\
\hspace{.3in} u=g, &\mbox{ on } \partial D.
\end{array}\right.
\end{equation}
The bilinear form ${\cal A}$ and the linear functional ${\cal F}$ are defined by
\begin{align}\label{eq:def:a}
{\cal A}(u,v)&=\int_D 
\kappa(x)\nabla u(x)\cdot  \nabla v(x)dx,  
 &&\mbox{ for all }  u,v\in H_0^1(D)\\ \nonumber
{\cal F}(v)&=\int_Df(x)v(x)dx, &&\mbox{ for all } v\in H_0^1(D). 
\end{align}
For more details of weak formulation, see Section \ref{SectionA.3}. We denote by $u_{\eta}$ the solution of the problem  \eqref{eq:problem} with Dirichlet boundary condition  \eqref{Boundarycondi}. From now on, we use the notation $w^{(m)}$, which means that the function $w$ is restricted on domain $D_{m}$, that is $w^{(m)}=w|_{D_m},\,m=0,1,\dots,M.$

\section{Overview of the Iterative Computation of Terms in Expansions}

In this section we summarize the procedure to obtain the terms in the expansion. We present a detailed derivation of the expansion in Section \ref{sec:2.3} for the one dimensional case.

\subsection{Derivation for One High-Conductivity Inclusion}\label{Sec:2.2.1}

Let us denote by $u_\eta$ the solution of~\eqref{eq:problem} with the coefficient $\kappa(x)$ defined in \eqref{eq:coeff1inc-multiple}. We express  the expansion as in \eqref{expansionu_eta} with the functions $u_{j}$, $j=0,1,\dots,$ satisfying the conditions on 
the boundary of $D$ given in \eqref{Boundarycondi}. 
For the case of the one inclusion with $m=0,1$, we consider $D$ as the disjoint union of a background domain $D_0$ and one inclusion $D_1$, such that $D=D_0\cup \overline{D}_1$.
We assume that 
$D_1$ is compactly included in $D$ ($\overline{D}_1\subset D$).\\

We obtain the following development for each term of this asymptotic expansion in the problem \eqref{eq:problem}. First we replace $u(x)$ for the expansion \eqref{expansionu_eta} in the bilinear form  \eqref{eq:def:a}, we have
\begin{eqnarray*}
\int_D\kappa(x)\sum_{j=0}^{\infty}\eta^{-j}\nabla u_{j}\cdot \nabla v & = & \int_{D_0}\sum_{j=0}^{\infty}\eta^{-j}\nabla u_{j}\cdot \nabla v+\eta\int_{D_1}\sum_{j=0}^{\infty}\eta^{-j}\nabla u_{j}\cdot \nabla v \\
& = & \sum_{j=0}^{\infty}\eta^{-j}\int_{D_0}\nabla u_{j}\cdot \nabla v+\eta\sum_{j=0}^{\infty}\eta^{-j}\int_{D_1}\nabla u_{j}\cdot \nabla v\\
& = & \sum_{j=0}^{\infty}\eta^{-j}\int_{D_0}\nabla u_{j}\cdot \nabla v+\sum_{j=0}^{\infty}\eta^{-j+1}\int_{D_1}\nabla u_{j}\cdot \nabla v,\\
\end{eqnarray*}
we change the index in the last sum to obtain,
\[
\sum_{j=0}^{\infty}\eta^{-j}\int_{D_0}\nabla u_{j}\cdot \nabla v+\sum_{j=-1}^{\infty}\eta^{-j}\int_{D_1}\nabla u_{j+1}\cdot \nabla v,
\]
and then, we have
\[
\sum_{j=0}^{\infty}\eta^{-j}\int_{D_0}\nabla u_{j}\cdot \nabla v+\eta\int_{D_1}\nabla u_0\cdot \nabla v+\sum_{j=0}^{\infty}\eta^{-j}\int_{D_1}\nabla u_{j+1}\cdot \nabla v.
\]
We obtain,
\begin{equation}
\eta \int_{D_{1}}\nabla u_{0}\cdot \nabla v+\sum_{j=0}^\infty \eta^{-j}\left(\int_{D_{0}}\nabla u_{j}\cdot \nabla v+\int_{D_{1}}\nabla u_{j+1}\cdot \nabla v\right)=\int_{D}fv.   
\end{equation}
In brief, we obtain the following equations after matching equal powers,
\begin{equation}\label{arreglo1}
\int_{D_{1}}\nabla u_{0}\cdot \nabla v = 0,
\end{equation}
\begin{equation}\label{arreglo2}
\int_{D_{0}}\nabla u_{0}\cdot \nabla v+\int_{D_{1}}\nabla u_{1}\cdot \nabla v=\int_{D}fv,
\end{equation}
and for $j\geq 1$, 
\begin{equation}\label{arreglo3}
\int_{D_{0}}\nabla u_{j}\cdot \nabla v+\int_{D_{1}}\nabla u_{j+1}\cdot \nabla v=0,
\end{equation}
for all $v\in H_{0}^{1}(D)$.\\

The equation \eqref{arreglo1} tells us that the function $u_{0}$ restricted to $D_{1}$ is constant, that is $u_{0}^{(1)}$ is a constant function. We introduce the following subspace,
\[
V_{\const}=\{ v\in H_{0}^{1}(D), \mbox{ such that } v^{(1)}=v|_{D_{1}} \mbox { is constant} \}.
\]
If in the equation \eqref{arreglo2} we choose test function  $z\in V_{\const}$, then, we see that $u_{0}$ satisfies the problem
\begin{equation}\label{zVconst}
\begin{array}{ll}
\int_{D}\nabla u_{0}\cdot \nabla z= \int_{D}fz,& \mbox{for all } z\in V_{\const}\\
\hspace{0.65in} u_{0}=g,& \mbox{on } \partial D.
\end{array}
\end{equation}
The problem \eqref{zVconst} is elliptic and it has a unique solution. This follows from the ellipticity of bilinear form ${\cal A}$, see Section \ref{Weakforsection}.\\

We introduce the \emph{harmonic characteristic function} $\chi_{D_{1}}\in H_{0}^{1}(D)$ with the condition
\[
\chi_{D_{1}}^{(1)}=1, \mbox{ in }D_{1},
\]
and which is equal to the harmonic extension of its boundary data in $D_{0}$ (see \cite{calo2014asymptotic}).
We then have,
\begin{align}\label{Chioneinclusion}
\displaystyle \int_{D_0} \nabla \chi_{D_{1}}^{(0)}\cdot \nabla z &=0, &&\mbox{ for all }  z\in H^1_0(D_0), \\
\chi_{D_{1}}^{(0)}&=1, &&\mbox{ on } \partial D_1,\nonumber\\
\chi_{D_{1}}^{(0)}&=0, &&\mbox{ on } \partial D. \nonumber
\end{align}
To obtain an explicit formula for $u_0$ we use the facts that the problem \eqref{zVconst} is elliptic and has unique solution, and a property of the harmonic characteristic functions described in the next Remark.

\begin{remark}\label{RemarkNew}
Let $w$ be a harmonic extension to $D_0$ of its Neumann data on $\partial D_0$. That is, $w$ satisfy the following problem
\[
\int_{D_0}\nabla w\cdot \nabla z=\int_{\partial D_0}\nabla w \cdot n_0z, \quad \mbox{for all }z\in H^1(D_0).
\]
Since $\chi_{D_1}=0$ on $\partial D$ and $\chi_{D_1}=1$ on $\partial D_1$, we have that
\[
\int_{D_0}\nabla \chi_{D_1}\cdot \nabla w=\int_{\partial D_0}\nabla w\cdot n_0\chi_{D_1}=0\left(\int_{\partial D}\nabla w\cdot n\right)+1\left(\int_{\partial D_1}\nabla w \cdot n_0\right),
\]
and we conclude that for every harmonic function on $D_0$,
\[
\int_{D_0}\nabla \chi_{D_1}\cdot \nabla w=\int_{\partial D_1}\nabla w\cdot n_0.
\]
Note that if $\xi \in H^1(D)$ is such that $\xi^{(1)}=\xi|_{D_1}=c$ is a constant in $D_1$ and $\xi^{(0)}=\xi|_{D_0}$ is harmonic in $D_0$, then $\xi=c\chi_{D_1}$.
\end{remark}
An explicit formula for $u_{0}$ is obtained as
\begin{equation}\label{eq:formulau0}
u_{0}=u_{0,0}+c_{0}\chi _{D_{1}},
\end{equation}
where $u_{0,0}\in H^{1}(D)$ is defined by $u_{0,0}^{(1)}=0$ in $D_{1}$ and $u_{0,0}^{(0)}$ solves the Dirichlet problem
\begin{align}
\displaystyle \int_{D_0} \nabla u_{0,0}^{(0)}\cdot \nabla z &=\int_{D_0}fz, &&\mbox{ for all } z\in H^1_0(D_0). \\
u_{0,0}^{(0)}&=0, &&\mbox{ on } \partial D_1,\nonumber\\
u_{0,0}^{(0)}&=g, &&\mbox{ on } \partial D. \nonumber
\end{align}
From equation \eqref{zVconst}, \eqref{eq:formulau0} and  the facts in the Remark \ref{RemarkNew} we have
\[
\int_{D_0}\nabla u_0\cdot \nabla \chi_{D_1}  =\int_Df\chi_{D_1},
\]
or
\[
\int_{D_0}\nabla(u_{0,0}+c_0\chi_{D_1})\cdot \nabla \chi_{D_1} =\int_Df\chi_{D_1},
\]
from which we can obtain the constant $c_{0}$ given by 
\begin{equation}
c_{0}=\frac{\int_Df\chi_{D_1}-\int_{D_{0}}\nabla u_{0,0}\cdot \nabla \chi_{D_{1}}}{\int_{D_{0}}|\nabla \chi_{D_{1}}|^{2}},
\end{equation}
or, using the Remark above we also have
\begin{equation} \label{c-u00}
c_{0}=\frac{\int_{D_{1}}f-\int_{\partial D_{1}}\nabla u_{0,0}\cdot n_0}{\int_{\partial D_{1}}\nabla \chi_{D_{1}}\cdot n_0}.
\end{equation}
Thus, by \eqref{c-u00} $c_{0}$ balances the fluxes across $\partial D_1$, see \cite{calo2014asymptotic}.\\

\begin{figure}[!h]
\begin{centering}
\includegraphics[scale=0.35]{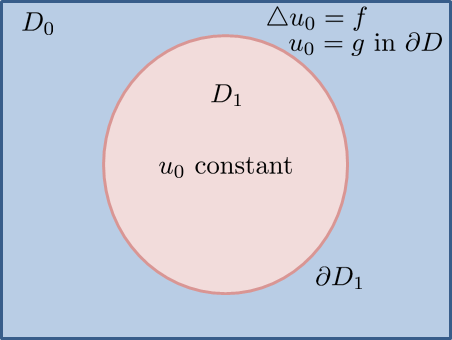}
\caption{Illustration of local problems related to $u_{0}$  in the inclusion and the background.} \label{Figure01}
\end{centering}
\end{figure}

In Figure  \ref{Figure01} we illustrate the properties of $u_{0}$, that is, the local problem $u_{0}$ solves in the inclusions and the background.\\

To get the function $u_{1}$ we proceed as follows. We first write
\[
u_1^{(1)}=\widetilde{u}_1^{(1)}+c_1, \quad \mbox{ where } \int_{D_1}\widetilde{u}_1^{(1)}=0,
\]
and $\widetilde{u}_1^{(1)}$ solves the Neumann problem
\begin{equation}\label{eq:Neumanu1}
\int_{D_1}\nabla \widetilde{u}_1^{(1)}\cdot \nabla z=\int_{D_1}fz -
\int_{\partial D_1}\nabla u_0^{(0)}\cdot n_1  z,
\quad \mbox{ for all } z\in H^1(D_1). 
\end{equation}
The detailed deduction of this Neumann problem will be presented later in Section \ref{sec:2.3}. Problem~\eqref{eq:Neumanu1}
satisfies the compatibility condition so it is solvable. The constant $c_1$ is given by
\begin{equation}\label{eq:def:ci}
c_1=-\frac{\int_{\partial D_1} \nabla \widetilde{u}_{1}^{(0)}\cdot n_0  }{
\int_{\partial D_1} \nabla \chi_{D_1}^{(0)}\cdot n_0}=
-\frac{\int_{D_0}\nabla \widetilde{u}_1 \cdot\nabla \chi_{D_1} }{
\int_{D_0} |\nabla \chi_{D_1}|^2}. 
\end{equation}

\begin{figure}[!h]
\begin{centering}
\includegraphics[scale=0.35]{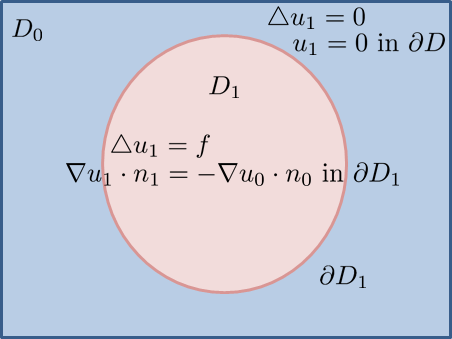}
\caption{Illustration of local problems related to $u_{1}$  in the inclusions and the background. First a problem in $D_1$ is solved and then a problem in $D_0$ is solved using the boundary data in $\partial D_1$.} \label{Figure02}
\end{centering}
\end{figure}

In the Figure \ref{Figure02} we illustrate the properties of $u_{1}$, that is, the local problem $u_{1}$ solves in the inclusion and the background. Now, we complete the construction of $u_1$ and show how to construct $u_j$, $j=2,3,\dots$. For a given $j=1,2,\dots$ and given $u_j^{(1)}$ we show how to construct $u_j^{(0)}$ and $u^{(1)}_{j+1}$.\\

Assume that we already constructed  $u_j^{(1)}$ in $D_1$ as the solution of a Neumann problem in 
$D_1$ and can be written as
\begin{equation}
u_j^{(1)}=\widetilde{u}_j^{(1)}+c_j, \quad \mbox{ where } \int_{D_1}
\widetilde{u}_j=0.
\end{equation}
We find $u_j^{(0)}$ in $D_0$ by solving a Dirichlet problem with known Dirichlet data, that is, 
\begin{equation}\label{eq:DirforuiD0}
\begin{array}{l}
\displaystyle \int_{D_0}\nabla u_{j}^{(0)}\cdot \nabla z=0, \mbox{ for all } 
z\in H^1_0(D_0), \\\\
u_j^{(0)}=u_j^{(1)} \,\left(=\widetilde{u}_j^{(1)}+c_j\right) \mbox{ on } 
\partial D_1
\quad \mbox{ and } \quad u_j=0 \mbox{ on } \partial D. 
\end{array} 
\end{equation}

Since $c_j$, $j=1,2,\dots,$  are constants, their harmonic extensions 
are  given by $c_j\chi_{D_1}$, $j=1,2,\dots$.  Then,  we conclude that 
\begin{equation}
u_j=\widetilde{u}_j+c_j\chi_{D_1},
\end{equation}
where $\widetilde{u}_j^{(0)}$ is defined by~\eqref{eq:DirforuiD0}. 
The balancing constant $c_j$ is given as
\begin{equation}\label{eq:def:ci}
c_j=-\frac{\int_{\partial D_1} \nabla \widetilde{u}_{j}^{(0)}\cdot n_0  }{
\int_{\partial D_1} \nabla \chi_{D_1}^{(0)}\cdot n_0}=
-\frac{\int_{D_0}\nabla \widetilde{u}_j \cdot\nabla \chi_{D_1} }{
\int_{D_0} |\nabla \chi_{D_1}|^2}, 
\end{equation}
so we have $\int_{\partial D_1 }\nabla 
u_{j}^{(0)}\cdot n_0=0$.
This completes the construction of $u_{j}$. 
The compatibility conditions are satisfied, so the problem can be to solve \eqref{eq:def:ci}. This will be detailed in the one dimensional case in Section \ref{sec:2.3}. In the Figure \ref{Figure03} we 
illustrate the properties of $u_{j}$, that is, the local problem $u_{j}$ solves in the inclusions and the 
background.\\

\begin{figure}[!h]
\begin{centering}
\includegraphics[scale=0.35]{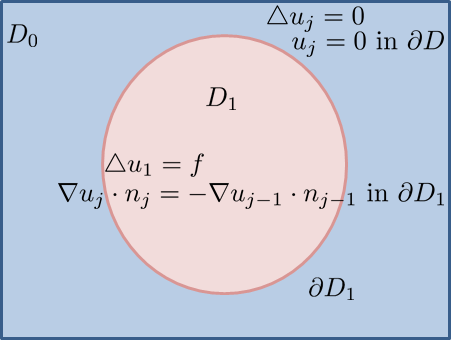}
\caption{Illustration of local problems related to $u_{j}$ , $j\geq 2$, in the inclusion and the background. First a problem in $D_1$ is solved and then a problem in $D_0$ is solved using the boundary data in $\partial D_1$.} \label{Figure03}
\end{centering}
\end{figure}

We recall the convergence result obtained in \cite{calo2014asymptotic}. 
There it is proven that there is a constant $C>0$ such that for every 
$\eta>C$, the expansion~\eqref{expansionu_eta} converges 
(absolutely) in $H^1(D)$. The asymptotic limit 
$u_0$ satisfies~\eqref{eq:formulau0}. Additionally,
 for every 
$\eta>C$, we have 
\[
\left\Vert u-\sum_{j=0}^J \eta^{-j}u_{j}\right\Vert _{H^1(D)}\leq C_1
\left(\|f\|_{H^{-1}(D_1)}
+\|g\|_{H^{1/2}(D)}\right)\sum_{j=J+1}^\infty\left(\frac{C}{\eta}\right)^{j},
\]
for $J\geq 0$.  For the proof we refer to \cite{calo2014asymptotic}. See also Section \ref{Sec:2.2.2} and Chapter \ref{Chapter4} were we extended the analysis for the case of high-contrast elliptic problems.

\subsection{Derivation for Multiple High-Conductivity Inclusions}\label{Sec:2.2.2}

In this section we express the expansion as in \eqref{expansionu_eta} with the coefficient 
\[
\kappa(x)=\left\{\begin{array}{cc} 
\eta,& x\in   D_m, ~~m=1,\dots,M, \\
1,& x\in  D_0=D\setminus \bigcup_{m=1}^M\overline{D}_m.
\end{array}\right.
\]
First, we describe the asymptotic problem.\\

We recall the space of constant functions inside the inclusions
\[
V_{\const}=\left\{v\in H_0^1(D), \mbox{ such that }v|_{D_m} \mbox{ is a constant for all }m=1,\dots,M\right\}.
\]

By analogy with the case of one high-conductivity inclusion, if we choose test function  $z\in V_{\const}$, then, we see 
that $u_{0}$ satisfies the problem
\begin{equation}\label{zVconstmultiply}
\begin{array}{ll}
\int_{D}\nabla u_{0}\cdot \nabla z= \int_{D}fz,& \mbox{for all } z\in V_{\const}\\
\hspace{0.65in} u_{0}=g,& \mbox{on } \partial D.
\end{array}
\end{equation}
The problem \eqref{zVconst} is elliptic and it has a unique solution. For each $m=1,\dots, M$ we introduce the harmonic characteristic function $\chi_{D_{m}}\in H_{0}^{1}(D)$ with the condition
\begin{equation}\label{chi multi}
\chi_{D_{m}}^{(1)}\equiv\delta_{m\ell} \mbox{ in }D_{\ell},\mbox{ for }\ell=1,\dots,M,
\end{equation}
and which is equal to the harmonic extension of its boundary data in $D_{0}$, $\chi_{D_m}$ (see \cite{calo2014asymptotic}).\\

We then have,
\begin{align}\label{Chimultiinclusions}
\int_{D_0} \nabla \chi_{D_{m}}\cdot \nabla z &=0, &&\mbox{ for all }  z\in H_0^1(D_0).\\
\chi_{D_{m}}&=\delta_{m\ell}, &&\mbox{ on } \partial D_{\ell},\mbox{ for }\ell=1,\dots,M,\nonumber\\
\chi_{D_{m}}&=0, &&\mbox{ on } \partial D. \nonumber
\end{align}
Where $\delta_{m\ell}$ represent the Kronecker delta, which is equal to $1$ when $m=\ell$ and $0$ otherwise. To obtain an explicit formula for $u_0$ we use the facts that the problem \eqref{zVconst} is elliptic and has unique solution, and a similar property of the harmonic characteristic functions described in the Remark \ref{RemarkNew}, we replace $\chi_{D_1}$ by $\chi_{D_m}$ for this case.\\

We decompose $u_0$ into the harmonic extension to $D_0$ of a function in $V_{\const}$ plus a function $u_{0,0}$ with value $g$ on the boundary and zero boundary condition on $\partial D_m$, $m=1,\dots,M$. We write
\[
u_0=u_{0,0}+\sum_{m=1}^{M}c_m(u_0)\chi_{D_m},
\]
where $u_{0,0}\in H^1(D)$ with $u_{0,0}=0$ in $D_m$ for $m=1,\dots,M$, and $u_{0,0}$ solves the problem in $D_0$
\begin{equation}\label{Def:u00}
\int_{D_0}\nabla u_{0,0}\cdot \nabla z=\int_{D_0}fz, \quad\mbox{for all }z\in H_0^1(D_0),
\end{equation}
with $u_{0,0}=0$ on $\partial D_m$, $m=1,\dots,M$ and $u_{0,0}=g$ on $\partial D$. We compute the constants $c_m$ using the same procedure as before. We have
\[
\int_{D_0}\nabla\left(u_{0,0}+\sum_{m=1}^{M}c_m(u_0)\chi_{D_m}\right)\cdot \nabla \chi_{D_{\ell}}=\int_Df\chi_{D_{\ell}},\quad\mbox{for }\ell=1,\dots,M,
\]
or
\[
\sum_{m=1}^{M}c_m(u_0)\int_{D_0}\nabla \chi_{D_m}\cdot \nabla\chi_{D_{\ell}}=\int_{D}f\chi_{D_{\ell}}-\int_{D_0}\nabla u_{0,0}\cdot \nabla \chi_{D_{\ell}},
\]
and this last problem is equivalent to the linear system,
\[
\mathbf{A}_{\geom}\mathbf{X}=\mathbf{b},
\]
where $\mathbf{A}_{\geom}=[a_{m\ell}]$ and $\mathbf{b}=(b_1,\dots,b_M)\in \R^M$ are defined by
\begin{align}\label{Def:aml}
a_{m\ell}&=\int_{D}\nabla \chi_{D_m}\cdot \nabla \chi_{D_{\ell}}=\int_{D_0}\nabla \chi_{D_m}\cdot \nabla\chi_{D_{\ell}},&&\\
b_{\ell}&=\int_{D}f\chi_{D_{\ell}}-\int_{D_0}\nabla u_{0,0}\cdot \nabla\chi_{D_{\ell}},&&\label{Def:bl}
\end{align}
and $\mathbf{X}=(c_1(u_0),\dots,c_M(u_0))\in \R^M$. Then we have
\begin{equation}\label{Def:X}
\mathbf{X}=\mathbf{A}_{\geom}^{-1}\mathbf{b}.
\end{equation}
Now using the conditions given for $\chi_{D_m}$ in the Remark \ref{RemarkNew} we have
\begin{equation}\label{Def:aml2}
a_{m\ell}=\int_{D}\nabla \chi_{D_m}\cdot \nabla \chi_{D_{\ell}}=\int_{\partial D_m}\chi_{D_{\ell}}\cdot n_m=\int_{\partial D_{\ell}}\nabla \chi_{D_{\ell}}\cdot n_{\ell}.
\end{equation}
Note that $\sum_{m=1}^{M}c_m(u_0)\chi_{D_m}$ is the solution of a Galerkin projection in the space $\Span\left\{\chi_{D_m}\right\}_{m=1}^{M}$. For more details see \cite{calo2014asymptotic} and references there in.\\

Now, we describe the next individual terms of the asymptotic expansion. As before, we have the restriction of $u_1$ to the sub-domain $D_m$, that is
\[
u_1^{(m)}=\widetilde{u}_1^{(m)}+c_{1,m},\quad\mbox{with }\int_{D_m}\widetilde{u}_1^{(m)}=0,
\]
and $\widetilde{u}_1^{(m)}$ satisfies the Neumann problem
\[
\int_{D_m}\nabla \widetilde{u}_1^{(m)}\cdot \nabla z=\int_{D_m}fz-\int_{\partial D_m}\nabla u_0^{(0)}\cdot n_{m}z,\mbox{ for all }z\in H^1(D_m),
\]
for $m=1,\dots,M$. The constants $c_{1,m}$ will be chosen later.\\

Now, for $j=1,2,\dots$ we have that $u_j^{(m)}$ in $D_m$, $m=1,\dots,M$, then we find $u_j^{(0)}$ in $D_0$ by solving the Dirichlet problem
\begin{align}\label{Def:ujDiric}
\int_{D_0}\nabla u_{j}^{(0)}\cdot \nabla z&=0, &&\mbox{ for all }z\in H_0^1(D_0)\\\nonumber
u_j^{(0)}&=u_j^{(m)}\, \left(=\widetilde{u}_j^{(m)}+c_{j,m}\right), &&\mbox{on }\partial D_m,\, m=1,\dots,M,\\\nonumber
u_j^{(0)}&=0, &&\mbox{on }\partial D.\nonumber
\end{align}
Since $c_{j,m}$ are constants, we define their corresponding harmonic extension by $\sum_{m=1}^{M}c_{j,m}\chi_{D_m}$. So we rewrite
\begin{equation}\label{Def:uj}
u_j=\widetilde{u}_j+\sum_{m=1}^{M}c_{j,m}\chi_{D_m}.
\end{equation}
The $u_{j+1}^{(m)}$ in $D_m$ satisfy the following Neumann problem
\[
\int_{D_m}\nabla u_{j+1}^{(m)}\cdot \nabla z=-\int_{\partial D_m}\nabla u_j^{(0)}\cdot n_0z,\quad\mbox{for all }z\in H^1(D).
\]
For the compatibility condition we need that for $\ell=1,\dots, M$
\begin{eqnarray*}
0  = \int_{\partial D_{\ell}}\nabla u_{j+1}^{(\ell)}\cdot n_{\ell} & = & -\int_{\partial D_{\ell}}\nabla u_j^{(0)}\cdot n_0\\
& = & -\int_{\partial D_{\ell}}\nabla \left(\widetilde{u}_j^{(0)}+\sum_{m=1}^{M}c_{j,m}\chi_{D_m}^{(0)}\right)\cdot n_0\\
& = & -\int_{\partial D_{\ell}}\nabla \widetilde{u}_j^{(0)}\cdot n_0-\sum_{m=1}^{M}c_{j,m}\int_{\partial D_m}\nabla \chi_{D_m}^{(0)}\cdot n_0.
\end{eqnarray*}
From \eqref{Def:aml} and \eqref{Def:aml2} we have that $\mathbf{X}_j=(c_{j,1},\dots,c_{j,M})$ is the solution of the system
\[
\mathbf{A}_{\geom}\mathbf{X}_j=\mathbf{Y}_j,
\]
where
\[
\mathbf{Y}_j=\left(-\int_{\partial D_1}\nabla \widetilde{u}_{j}^{(0)}\cdot n_0,\dots ,-\int_{\partial D_m}\nabla \widetilde{u}_j^{(0)}\cdot n_0\right),
\]
or
\[
\mathbf{Y}_j=\left(-\int_{D_0}\nabla \widetilde{u}_{j}^{(0)}\cdot \nabla \chi_{D_1},\dots ,-\int_{D_0}\nabla \widetilde{u}_j^{(0)}\cdot \nabla \chi_{D_M}\right).
\]
For the convergence we have the result obtained in \cite{calo2014asymptotic}. There it is proven that there are constants $C,C_1>0$ such that $\eta>C$, the expansion \eqref{expansionu_eta} converges absolutely in $H^1(D)$ for $\eta$ sufficiently large. We recall the following result.

\begin{theorem}
Consider the problem \eqref{eq:problem} with coefficient \eqref{eq:coeff1inc-multiple}. The corresponding expansion \eqref{expansionu_eta} with boundary condition  \eqref{Boundarycondi} converges absolutely in $H^1(D)$ for $\eta$ sufficiently large. Moreover, there exist positive constants $C$ and $C_1$ such that for every $\eta>C$, we have
\[
\left\Vert u-\sum_{j=0}^{J}\eta^{-j}u_j\right\Vert_{H^1(D)}\leq C_1\left(\|f\|_{H^{-1}(D)}+\|g\|_{H^{1/2}(\partial D)}\right)\sum_{j=J+1}^{\infty}\left(\frac{C}{\eta}\right)^{j},
\]
for $J\geq 0$.
\end{theorem}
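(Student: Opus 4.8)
The plan is to reduce the theorem to three facts: a \emph{residual identity} showing that the truncated sum $E_J := \sum_{j=0}^J \eta^{-j}u_j$ solves the original problem up to a single explicit boundary term of size $O(\eta^{-J})$; the \emph{coercivity} of $\mathcal{A}$ on $H_0^1(D)$ (which holds because $\kappa\ge 1$, so $\mathcal{A}(w,w)\ge\|\nabla w\|_{L^2(D)}^2\gtrsim\|w\|_{H^1(D)}^2$ by Poincar\'e); and a \emph{uniform geometric bound} $\|u_j\|_{H^1(D)}\le C_0\,C_2^{\,j}\bigl(\|f\|_{H^{-1}(D)}+\|g\|_{H^{1/2}(\partial D)}\bigr)$ on the individual terms, with $C_0,C_2$ independent of $\eta$ and $j$. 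Granting these, absolute convergence for $\eta>C_2$ is immediate, and matching the two limits gives $u_\eta=\sum_j\eta^{-j}u_j$ together with the stated rate.

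For the residual identity I would substitute $E_J$ into $\mathcal{A}(\cdot,v)$, split the integral over $D_0$ and $\bigcup_{m}D_m$, weight the inclusion part by $\eta$, and re-index the inclusion sum by $j\mapsto j-1$. The term of order $\eta^{1}$ vanishes because $u_0$ is constant in each inclusion (the multiple-inclusion analogue of \eqref{arreglo1}); the term of order $\eta^{0}$ equals $\mathcal{F}(v)$ by the equations defining $u_0$ (the analogue of \eqref{arreglo2}, i.e.\ \eqref{zVconstmultiply}, \eqref{Def:u00} and the definition of the constants $c_m(u_0)$); and the terms of order $\eta^{-j}$ for $1\le j\le J-1$ vanish by the analogue of \eqref{arreglo3}. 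Only one boundary contribution survives:
\[
\mathcal{A}(E_J,v)=\mathcal{F}(v)+\eta^{-J}\int_{D_0}\nabla u_J\cdot\nabla v,\qquad\text{for all }v\in H_0^1(D).
\]
Since $u_\eta-E_J\in H_0^1(D)$ by \eqref{Boundarycondi}, subtracting the weak formulation \eqref{eq:problem}, testing with $v=u_\eta-E_J$, and using coercivity yields
\[
\|u_\eta-E_J\|_{H^1(D)}\le c_0^{-1}\,\eta^{-J}\,\|u_J\|_{H^1(D)},
\]
where $c_0>0$ is the coercivity constant.

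The core of the argument is the uniform estimate on $\|u_j\|_{H^1(D)}$, which I would prove by induction on $j$, tracking constants through the three solution operators used to build $u_j$: (i) the Neumann problems in the inclusions $D_m$ defining $\widetilde u_j^{(m)}$, which are solvable because the compatibility conditions $\int_{\partial D_m}\nabla u_{j-1}^{(0)}\cdot n_m=0$ hold by the choice of the previous constants $c_{j-1,m}$, and whose stability is governed by the trace theorem on $\partial D_m$ applied to $\nabla u_{j-1}^{(0)}$; (ii) the Dirichlet (harmonic extension) problem \eqref{Def:ujDiric} in $D_0$, controlled by the trace/extension constant of $D_0$; and (iii) the linear system $\mathbf{A}_{\geom}\mathbf{X}_j=\mathbf{Y}_j$ for the balancing constants, where $\mathbf{A}_{\geom}=\bigl[\int_{D_0}\nabla\chi_{D_m}\cdot\nabla\chi_{D_\ell}\bigr]$, see \eqref{Def:aml}--\eqref{Def:aml2}, is the Gram matrix of the linearly independent functions $\{\nabla\chi_{D_m}\}_{m=1}^M$, hence symmetric positive definite with $\|\mathbf{A}_{\geom}^{-1}\|$ bounded purely in terms of the geometry of $D$ and the $D_m$. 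Composing these bounds produces one amplification factor $C_2$ per level, independent of $\eta$; the base step treats $u_0$ directly from \eqref{zVconstmultiply}, \eqref{Def:u00}, \eqref{Def:X} and $u_1$ from its Neumann problem, which is where $\|g\|_{H^{1/2}(\partial D)}$ and $\|f\|_{H^{-1}(D)}$ enter.

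With the uniform bound in hand, for $\eta>C_2=:C$ the series $\sum_j\eta^{-j}u_j$ converges absolutely in $H^1(D)$ to some $\widetilde u$; the residual estimate of the second paragraph gives $\|u_\eta-E_J\|_{H^1(D)}\le c_0^{-1}C_0\,(\eta/C)^{-J}\bigl(\|f\|_{H^{-1}(D)}+\|g\|_{H^{1/2}(\partial D)}\bigr)\to 0$, so $\widetilde u=u_\eta$, and then
\[
\Bigl\|u_\eta-\sum_{j=0}^{J}\eta^{-j}u_j\Bigr\|_{H^1(D)}\le\sum_{j=J+1}^{\infty}\eta^{-j}\|u_j\|_{H^1(D)}\le C_1\bigl(\|f\|_{H^{-1}(D)}+\|g\|_{H^{1/2}(\partial D)}\bigr)\sum_{j=J+1}^{\infty}\Bigl(\frac{C}{\eta}\Bigr)^{j},
\]
with $C_1:=C_0$. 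I expect the main obstacle to be steps (ii)--(iii) of the inductive estimate, specifically obtaining the amplification constant $C_2$ \emph{independently of} $\eta$ and a quantitative lower bound on the smallest eigenvalue of $\mathbf{A}_{\geom}$; once these are in place the rest is standard elliptic stability and trace estimates. This mirrors the one-inclusion argument of \cite{calo2014asymptotic}, carried out for $M$ inclusions.
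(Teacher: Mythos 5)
Your proposal is correct and follows essentially the same route as the paper: the heart of both arguments is the $\eta$-independent geometric bound $\|u_j\|_{H^1(D)}\lesssim C^j\bigl(\|f\|_{H^{-1}(D)}+\|g\|_{H^{1/2}(\partial D)}\bigr)$, obtained by composing the stability of the inclusion Neumann solves, the harmonic extension to $D_0$, and the positive-definite balancing system $\mathbf{A}_{\geom}\mathbf{X}_j=\mathbf{Y}_j$ (this is exactly the content of Lemmas \ref{mainlemma}, \ref{lemmaconver1} and \ref{lemmaconver2} in the elasticity analogue proved in Chapter \ref{Chapter4}, the pressure case being deferred to the cited reference). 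Your explicit residual identity combined with coercivity, which identifies the limit of the series with $u_\eta$ and yields the truncation bound, is a slightly more careful way of closing the argument than the paper's tail-of-the-series estimate, but it is the same strategy rather than a genuinely different one.
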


\section{Detailed Derivation in One Dimension}\label{sec:2.3}

In this section we detail the procedure to derive (by using weak formulations) the asymptotic expansion for the solution of the problem \eqref{Strongform} in one dimension. In order to simplify the presentation we have chosen a one dimensional problem with only one high-contrast inclusion. The procedure, without detail on the derivation was summarized in Section \ref{Sec:2.2.1} for one inclusion and Section \ref{Sec:2.2.2} for multiple inclusions.\\

Let us consider the following one dimensional problem (in its strong form)
\begin{equation}
\left\{ \begin{array}{cc}
-\left(\kappa\left(x\right)u'\left(x\right)\right)'=f\left(x\right), & \mbox{for all }x\in\left(-1,1\right),\\
u\left(-1\right)=u\left(1\right)=0.
\end{array}\right.\label{eq1 strong form}
\end{equation}
where the high-contrast coefficient is given by,
\begin{equation}\label{eq:def:kappa1D}
\kappa\left(x\right)=\left\{ \begin{array}{cc}
1, & -1\leq x<-\delta\mbox{ or }\delta<x\leq1,\\
\eta, & -\delta\leq x\leq\delta.
\end{array}\right.
\end{equation}
In this case we assume $\eta>>1$. This differential equation models the stationary temperature of a bar represented by the one dimensional domain $\left[-1,1\right]$.  In this case, the coefficient $\kappa$ models the conductivity of the bar which depends on the material the bar is made of. For this particular coefficient, the part of the domain represented by the interval  $\left(-\delta,\delta\right)$ is highly-conducting when compared with the rest of the domain and we say that this medium (bar) has high-contrast conductivity properties.\\

We first write the weak form of this problem. We follow the usual procedure, that is, we select a space of test functions, multiply both sides of the equations by this test functions, then, we use integration by parts formula (in one dimension) and obtain the weak form. In order to fix ideas and concentrate on the derivation of the asymptotic expansion we use the usual test function and solutions spaces, in this case,
that would be subspaces of $H^{1}\left(-1,1\right)$ for both, solutions and test functions.\\

Let $v\in H_{0}^{1}\left(-1,1\right)$ be a test function and $u\in H^{1}\left(-1,1\right)$ is the sought solution, for more details of the spaces $H^1$ and $H_0^1$ see Section \ref{SectionA.3}. Then after integration by parts we can write
\[
\int_{-1}^{1}\kappa u'v'=\int_{-1}^{1}fv,\quad \mbox{ for all }  v\in H_{0}^{1}\left(-1,1\right).
\]
Here $u$ is the weak solution of problem (\ref{eq1 strong form}). Existence and uniqueness of the weak solution follows from usual arguments (Lax-Milgram theorem in Section \ref{SectionB.3}). To emphasis the dependence of $u$ on the contrast $\eta$, from now on, we write $u_{\eta}$ for the solution of (\ref{eq1 strong form}). Here we note that in order to simplify the notation we have omitted the integration variable $x$ and the integration measure $dx$. We can split this integral in  sub-domains integrals and recalling the definitions of the high-contrast conductivity coefficient $\kappa=\kappa\left(x\right)$ in \eqref{eq:def:kappa1D}, we obtain
\begin{equation}
\int_{-1}^{-\delta}u'v'+\eta\int_{-\delta}^{\delta}u'v'+\int_{\delta}^{1}u'v' =\int_{-1}^{1}fv,\label{eq2 weak form}
\end{equation}
for all $v\in H_{0}^{1}\left(-1,1\right)$. We observe that each integral on the left side is finite (since all the factors are in $L^{2}\left(-1,1\right)$).\\

Our goal is a write a expansion of the form 
\begin{equation}
u_{\eta}\left(x\right)=\sum_{j=0}^{\infty}\eta^{-j}u_{j}\left(x\right),\label{eq3 power series}
\end{equation}
with individual terms in $H^{-1}\left(-1,1\right)$ such that the satisfy the Dirichlet boundary condition $u_{j}\left(-1\right)=u_{j}\left(1\right)=0$ for $j\geq1$.  Other boundary conditions for \eqref{eq1 strong form} can be handled similarly. Each term will solve (weakly)  boundary value problems in the sub-domains $\left(-1,-\delta\right),\,\left(-\delta,\delta\right)$ and $\left(\delta,1\right)$. The different data on the boundary of sub-domains are revealed by the corresponding local weak formulation derived from the power
series above \eqref{eq3 power series}. We discuss this in detailed below.\\

We first assume that \eqref{eq3 power series} is a valid solution of problem \eqref{eq2 weak form}, see \cite{calo2014asymptotic}, so that we can substitute \eqref{eq3 power series} into (\ref{eq2 weak form}). We obtain that for all $v\in H_{0}^{1}\left(-1,1\right)$ the following hold
\[
\int_{1}^{-\delta}\sum_{j=0}^{\infty}\eta^{-j}u'_{j}v'+\eta\int_{-\delta}^{\delta}\sum_{j=0}^{\infty}\eta^{-j}u'_{j}v'+\int_{\delta}^{1}\sum_{j=0}^{\infty}\eta^{-j}u'_{j}v'=\int_{-1}^{1}fv,
\]
or, after formally interchanging integration and summation signs, 
\[
\sum_{j=0}^{\infty}\eta^{-j}\left[\int_{1}^{-\delta}u'_{j}v'+\int_{\delta}^{1}\eta^{-j}u'_{j}v'\right]+\sum_{j=0}^{\infty}\eta^{-j+1}\int_{-\delta}^{\delta}u'_{j}v'=\int_{-1}^{1}fv.
\]
Rearrange these to obtain,
\[
\sum_{j=0}^{\infty}\eta^{-j}\left[\int_{1}^{-\delta}u'_{j}v'+\int_{\delta}^{1}\eta^{-j}u'_{j}v'\right]+\eta\int_{-\delta}^{\delta}u'_{0}v'+\sum_{j=0}^{\infty}\eta^{-j}\int_{-\delta}^{\delta}u'_{j+1}v'=\int_{-1}^{1}fv,
\]
which after collecting terms can be written as
\begin{equation}\label{eq4 wf-ueta-exp}
\sum_{j=0}^{\infty}\eta^{-j}\left[\int_{1}^{-\delta}u'_{j}v'+\int_{-\delta}^{\delta}u'_{j+1}v'+\int_{\delta}^{1}\eta^{-j}u'_{j}v'\right]+\eta\int_{-\delta}^{\delta}u'_{0}v'=\int_{-1}^{1}fv,
\end{equation}
which holds for all test function $v\in H^1_0(-1,1)$. Now we match up the coefficients corresponding to equal powers on the both sides of equation \eqref{eq4 wf-ueta-exp}.

\subsubsection{Terms corresponding to $\eta$}

In the equation \eqref{eq4 wf-ueta-exp} above, there is only one term with $\eta$
so that we obtain
\[
\int_{-\delta}^{\delta}u'_{0}v'=0,\quad\mbox{for all }v\in H_{0}^{1}\left(-1,1\right).
\]
Thus  $u_0^\prime=0$  (which can be readily seen if we take a test function $v$ such that $v=u_{0}$) and therefore $u_{0}$ is a constant in $(-\delta , \delta)$.

\subsubsection{Terms corresponding  to $\eta^{0}=1$}

The next coefficients to match up are those of $\eta^{0}=1$, the coefficients in \eqref{eq4 wf-ueta-exp} with $j=0$.\\
\begin{figure}
\begin{centering}
\includegraphics[scale=0.3]{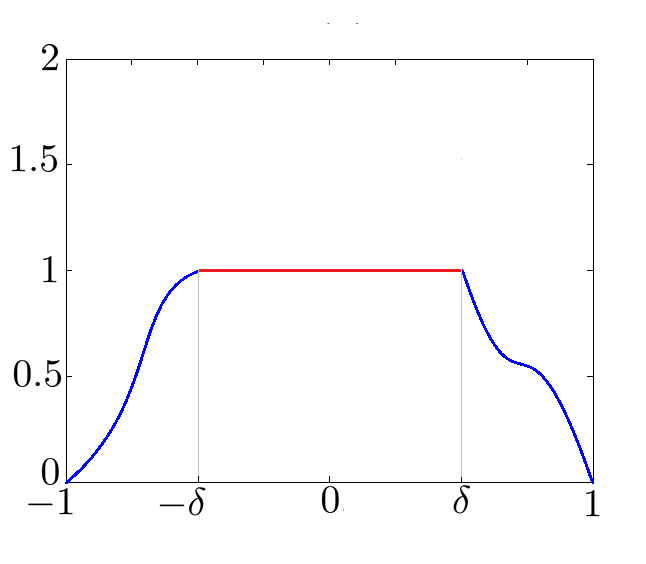}
\caption{$v$ function in $V_{\const}$.} \label{Figurev}
\end{centering}
\end{figure}
Let
\[
V_{\const}=\left\{ v\in H_{0}^{1}(-1,1): \ v^{(1)}=v|_{(-\delta,\delta)} \mbox{ is constant}\right\}.
\]
For an illustration of the function $v$ in $V_{\const}$, see Figure \ref{Figurev}. We have
\begin{equation}\label{eq5 wf u1}
\int_{-1}^{-\delta}u'_{0}v'+\int_{\delta}^{1}u'_{0}v'=\int_{-1}^{1}fv,\quad\mbox{for all }v\in V_{\const}.
\end{equation}
To study further this problem we introduce the following decomposition for functions in $v\in V_{\const}$. For any $v\in  V_{\const}$, we write 
\[
v=c_0\chi+v^{\left(1\right)}+v^{\left(2\right)},
\]
where $v^{\left(1\right)}\in H_{0}^{1}\left(-1,-\delta\right)$, $v^{\left(2\right)}\in H_{0}^{1}\left(\delta,1\right)$ and $\chi$ is a continuous function defined by
\begin{equation}\label{Chifunction}
\chi(x)=\left\{ \begin{array}{cc}
1, & x\in(-\delta,\delta),\\
0, & x=-1,\, x=1,\\
\mbox{harmonic}, & \mbox{otherwise}.
\end{array}\right.
\end{equation}
\begin{figure}[!h]
\begin{centering}
\includegraphics[scale=0.3]{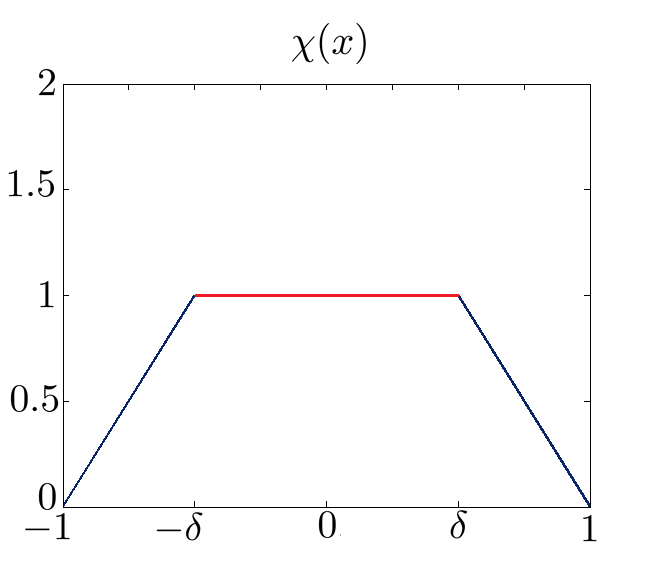}
\caption{$\chi$ function.} \label{FigureChi}
\end{centering}
\end{figure}
See Figure \ref{FigureChi} for an illustration of $\chi$. Note also that
\begin{equation}\label{Chideriv}
\chi'(x)=\left\{ \begin{array}{cc}
0, & x\in(-\delta,\delta),\\
1/(1-\delta), & x\in(-1,-\delta)\\
-1/(1-\delta), & x\in(\delta,1),
\end{array}\right.
\end{equation}
and observe that $\chi\in H_0^1(-1,1)$.\\

The same decomposition holds for the $u_{0}$, that is, $u_{0}=c_0\chi+u^{(1)}+u^{(2)}$, since $u_0\in V_{\const}$. Note that this is an orthogonal decomposition. This can be verified by direct integration. Next we split (\ref{eq5 wf u1}) into three equations using the decomposition introduced above. This is done by testing again subset of test functions determined by the decomposition introduced above.

\begin{enumerate}
\item Test against $\chi$, that is, let $v=\chi\in H_{0}^{1}\left(-1,1\right)$
in (\ref{eq5 wf u1}) and using that $u_{0}=d\chi+u^{\left(1\right)}+u^{\left(2\right)}$
we get,
\begin{equation} \label{decomposition u0}
\int_{-1}^{-\delta}\left(c_0\chi+u^{\left(1\right)}+u^{\left(2\right)}\right)'\chi'+\int_{\delta}^{1}\left(c_0\chi+u^{\left(1\right)}+u^{\left(2\right)}\right)'\chi'=\int_{-1}^{1}f\chi,
\end{equation}
which after simplification (using the definition of $\chi$ and the fundamental theorem of calculus) gives 
\[
\int_{-1}^{-\delta}c_0\left(\chi'\right)^{2}+\int_{\delta}^{1}c_0\left(\chi'\right)^{2}=\int_{-1}^{1}f\chi,
\]
from which we get 
\begin{equation}\label{const d}
c_0=\frac{\int_{-1}^{1}f\chi}{\int_{-1}^{-\delta}\left(\chi'\right)^{2}+\int_{\delta}^{1}\left(\chi'\right)^{2}}=\frac{1-\delta}{2}\int_{-1}^{1}f\chi,
\end{equation}
where we have used the derivative of $\chi$ defined in \eqref{Chideriv} for $(-1,-\delta)$ and $(\delta,1)$ respectively.

\item We now test (\ref{eq5 wf u1}) against $v^{\left(1\right)}\in H_{0}^{1}\left(-1,-\delta\right)$ (extended by zero), that is, we take $v=v^{\left(1\right)}$ in $v^{\left(1\right)}\in H_{0}^{1}\left(-1,-\delta\right)$
to get
\[
\int_{-1}^{-\delta}\left(c_0\chi+u^{\left(1\right)}+u^{\left(2\right)}\right)'\left(v^{\left(1\right)}\right)'+\int_{\delta}^{1}\left(c_0\chi+u^{\left(1\right)}+u^{\left(2\right)}\right)'\left(v^{\left(1\right)}\right)'=\int_{-1}^{1}fv^{\left(1\right)}.
\]
Note that ${\displaystyle \int_{\delta}^{1}\left(c_0\chi+u^{\left(1\right)}+u^{\left(2\right)}\right)'\left(v^{\left(1\right)}\right)'=0}$
(since $v^{\left(1\right)}$ is supported in $\left(-1,-\delta\right)$).
Using this, we have that
\[
\int_{-1}^{-\delta}\left(c_0\chi+u^{\left(1\right)}+u^{\left(2\right)}\right)'\left(v^{\left(1\right)}\right)'=\int_{-1}^{-\delta}fv^{\left(1\right)}.
\]
After simplifying (using the definition of $\chi$ and the fundamental theorem of calculus) we have
\begin{equation}
\int_{-1}^{-\delta}\left(u^{\left(1\right)}\right)'\left(v^{\left(1\right)}\right)'=\int_{-1}^{-\delta}fv^{\left(1\right)},\quad\mbox{for all }v^{\left(1\right)}\in H_{0}^{1}\left(-1,-\delta\right).\label{eq6 wf u(1)}
\end{equation}
Recalling the boundary values of $u^{\left(1\right)}$ we see that (\ref{eq6 wf u(1)}) is the weak formulation of Dirichlet problem
\begin{equation}\label{eq:dirproblemastrong1}
\left\{ \begin{array}{c}
-\left(u^{\left(1\right)}\right)''=f, \quad \mbox{ in } (-1,-\delta), \\
u^{\left(1\right)}\left(-1\right)=0,\, u^{\left(1\right)}\left(-\delta\right)=0.
\end{array}\right.
\end{equation}

\item Testing against $v^{\left(2\right)}\in H_{0}^{1}\left(\delta,1\right)$
we get (in a similar fashion) that
\begin{equation}
\int_{\delta}^{1}\left(u^{\left(2\right)}\right)'\left(v^{\left(2\right)}\right)'=\int_{\delta}^{1}fv^{\left(2\right)},\quad\mbox{for all }v^{\left(2\right)}\in H_{0}^{1}\left(\delta,1\right).\label{eq7 wf u(2)}
\end{equation}
Then $u^{\left(2\right)}$ is the weak formulation to the Dirichlet problem 
\[
\left\{ \begin{array}{c}
-\left(u^{\left(2\right)}\right)''=f, \quad \mbox{ in } (\delta,1),\\
u^{\left(2\right)}\left(\delta\right)=0,\, u^{\left(2\right)}\left(1\right)=0.
\end{array}\right.
\]
\end{enumerate}

\subsubsection{A Remark about Dirichlet and Neumann Sub-Domain Problems}
Now we make the following remark that will be central for the upcoming arguments. 
Looking back to (\ref{eq5 wf u1}) we make the following observation
about $u_{0}$. Taking test functions $v\in H_{0}^{1}\left(-1,-\delta\right)$
we see that $u_{0}$ solve the following Dirichlet problem
\[
\int_{-1}^{-\delta}u'_{0}v'=\int_{-1}^{-\delta}fv,\quad\mbox{for all }v\in H_{0}^{1}\left(-1,-\delta\right),
\]
with the corresponding boundary data. The strong form for this Dirichlet problem is given by
\[
\left\{ \begin{array}{c}
-z''=f, \quad \mbox{ in }  (-1,-\delta)\\
z\left(-1\right)=0,\, z\left(-\delta\right)=u_{0}\left(-\delta\right),
\end{array}\right.
\]
with only solution $z=u_{0}$ in $\left(-1,-\delta\right)$. From here we conclude that $z=u_{0}$ is also the solution of the following mixed Dirichlet and Neumann boundary condition problem
\[
\left\{ \begin{array}{c}
-z''=f, \quad \mbox{ in } (-1,-\delta),\\
z\left(-1\right)=0,\, z'\left(-\delta^{-}\right)=u'_{0}\left(-\delta^{-}\right),
\end{array}\right.
\]
with weak formulation given by
\[
\int_{-1}^{-\delta}z'v'=u'_{0}\left(-\delta^{-}\right)v\left(-\delta^{-}\right)+\int_{-1}^{-\delta}fv,\quad \mbox{ for all }v\in H^1(-1,-\delta),
\]
for all $v\in H^{1}\left(-1,-\delta\right)$. Since $z=u_{0}\in H^{1}\left(-1,-\delta\right)$ is solution of this problem we can write
\begin{equation}\label{eq8 u0 first}
\int_{-1}^{-\delta}u'_{0}v'=u'_{0}\left(-\delta^{-}\right)v\left(-\delta^{-}\right)+\int_{-1}^{-\delta}fv,\quad \mbox{ for all }v\in H^1(-1,-\delta)
\end{equation}
for all $v\in H^{1}\left(-1,-\delta\right)$. Analogously we can write 
\begin{equation}\label{eq9 u0 second}
\int_{\delta}^{1}u'_{0}v'=u'_{0}\left(\delta^{+}\right)v\left(\delta^{+}\right)+\int_{\delta}^{1}fv,\quad \mbox{ for all }v\in H^1(\delta,1)
\end{equation}
for all $v\in H^{1}\left(\delta,1\right)$.\\

By substituting the equations (\ref{eq8 u0 first}) and (\ref{eq9 u0 second})
into (\ref{eq5 wf u1}) we have
\[
\left(u'_{0}\left(-\delta^{-}\right)v\left(-\delta^{-}\right)+\int_{-1}^{-\delta}fv\right)+\int_{-\delta}^{\delta}u'_{1}v'+\left(\int_{\delta}^{1}fv-u'_{0}\left(\delta^{+}\right)v\left(\delta^{+}\right)\right)=\int_{-1}^{1}fv,
\]
so that, after simplifying it gives, for all $v\in H^{1}\left(-1,1\right)$, that
\begin{equation}
\int_{-\delta}^{\delta}u'_{1}v'=\int_{-\delta}^{\delta}fv+u'_{0}\left(\delta^{+}\right)v\left(\delta^{+}\right)-u'_{0}\left(-\delta^{-}\right)v\left(-\delta^{-}\right).\label{eq10 wf u1 in -d,d}
\end{equation}
This last equation (\ref{eq10 wf u1 in -d,d}) is the weak formulation of the Neumann problem for $u_{1}$ defined by
\begin{equation}
\left\{ \begin{array}{c}
-u_{1}''=f, \quad \mbox{ in } (-\delta,\delta),\\
u'_{1}\left(-\delta^{+}\right)=u'_{0}\left(-\delta^{-}\right),\, u'_{1}\left(\delta^{-}\right)=u'_{0}\left(\delta^{+}\right).
\end{array}\right.\label{eq11 Neumann for u1}
\end{equation}
This classical Neumann problem has solution only if the compatibility condition is satisfied. The compatibility condition comes from the fact that if we integrate directly the first equation in (\ref{eq11 Neumann for u1}), we have
\[
-\int_{-\delta}^{\delta}u''_{1}=\int_{-\delta}^{\delta}f,\,\mbox{ and then we need }u'_{1}(-\delta^{+})-u'_{1}(\delta^{-})=\int_{-\delta}^{\delta}f.
\]
Here, the derivatives are defined using side limits for the function $u_{1}$. Using the fact that $u'_{1}(-\delta^{+})=u'_{0}(-\delta^{-})$
and $u'_{1}(\delta^{-})=u'_{0}(\delta^{+})$ then, the compatibility conditions becomes
\begin{equation}
u'_{0}(-\delta^{-})-u'_{0}(\delta^{+})=\int_{-\delta}^{\delta}f.\label{eq12 compati u1}
\end{equation}
In other to verify this compatibility condition we first observe that if we take $v=\chi$ in (\ref{eq8 u0 first}) and (\ref{eq9 u0 second}) and recalling the definition of $\chi$ we obtain
\[
\int_{-1}^{-\delta}u'_{0}\chi'=u'_{0}(-\delta^{-})(1)+\int_{-1}^{-\delta}f\chi\quad\mbox{and}\quad\int_{\delta}^{1}u'_{0}\chi'=u'_{0}(\delta^{+})(-1)+\int_{\delta}^{1}f\chi.
\]
On the other hand, if we take $v=\chi$ in (\ref{eq5 wf u1}) we have
\[
\int_{-1}^{-\delta}u'_{0}\chi'+0+\int_{\delta}^{1}u'_{0}\chi'=\int_{-1}^{1}f\chi.
\]
Combining these three equations we conclude that (\ref{eq12 compati u1})
holds true.\\
%

Now, observe that \eqref{eq11 Neumann for u1} has unique solution up to a constant so, the solution take the form $u_{1}=\widetilde{u}_{1}+c_{1}$, with $c_{1}$ an integration constant; in addition, $\widetilde{u}_{1}$ is a function with the property that its means measure is $0$, i.e.,
\[
\int_{-\delta}^{\delta}\widetilde{u}_{1}=0.
\]
In this way, we need to determine the value of constant $c_{1}$, for this we substitute the function $u_{1}$ with a total function $\widetilde{u}_{1}+c_{1}$, but the constant $c_{1}$ cannot be computed in this part, so it will be specified later.\\

So $\widetilde{u}_1$ solves the Neumann problem in $(-\delta,\delta)$
\begin{equation}\label{problem u'1}
\int_{-\delta}^{\delta}\widetilde{u}_1'v'=\int_{-\delta}^{\delta}fv-\left[u_0'(\delta^{+})-u_0'(-\delta^{-})\right],\mbox{ for all }v\in H^1(-\delta,\delta).
\end{equation}

\subsubsection{Terms Corresponding  to $\eta^{-1}$}

For the other parts of $u_{1}$ in the interval, we need the term of $\eta$ with $j=1$, which is given from the equation \eqref{eq4 wf-ueta-exp}, we have
\begin{equation}
\int_{-1}^{-\delta}u'_{1}v'+\int_{-\delta}^{\delta}u'_{2}v'+\int_{\delta}^{1}u'_{1}v'=0,\quad\mbox{for all }v\in H_{0}^{1}\left(-1,1\right).\label{eq13 wf u2}
\end{equation}
Note that if we restrict this equation to test functions $v\in H_{0}^{1}(-1,-\delta)$ and $v\in H_{0}^{1}(\delta,1)$ such as in (\ref{eq5 wf u1}), i.e., $\int_{-\delta}^{\delta}u'_{2}v'=0$, we have
\[
\int_{-1}^{-\delta}u'_{1}v'=0, \mbox{ for all }v\in H_{0}^{1}(-1,-\delta),
\]
and
\[
\int_{\delta}^{1}u'_{1}v'=0,\mbox{ for all }v\in H_{0}^{1}(\delta,1),
\]
where each integral is a weak formulation to problems with Dirichlet conditions
\begin{equation}
\left\{ \begin{array}{l}
-u_{1}''=0,\quad \mbox{ in } (-1,-\delta)\\
u_{1}(-1)=0,\\
u_{1}(-\delta^{-})=u_{1}(-\delta^{+})=\widetilde{u}_1(-\delta^{+})+c_1,
\end{array}\right.\label{eq14 D-N u1}
\end{equation}
and
\begin{equation}
\left\{ \begin{array}{l}
-u_{1}''=0,\quad \mbox{ in } (\delta,1)\\
u_{1}(\delta^{+})=u_{1}(\delta^{-})=\widetilde{u}_1(\delta^{-})+c_1,\\ 
u_{1}(1)=0,
\end{array}\right.\label{eq15 D-N u1}
\end{equation}
respectively.\\

Back to the problem (\ref{eq13 wf u2}) above, we compute $u_{2}$ with given solutions in \eqref{eq14 D-N u1} and \eqref{eq15 D-N u1} we get
\[
\int_{-\delta}^{\delta}u_{2}'v'=u_{1}'(-\delta^{-})v(-\delta^{-})-u_{1}'(\delta^{+})v(\delta^{+}).
\]
This equation is the weak formulation to the Neumann problem 
\begin{equation}\label{u2''forcompa}
\left\{ \begin{array}{c}
-u_{2}''=0,\quad \mbox{ in } (-\delta,\delta)\\
u'_{2}(-\delta^{+})=u'_{1}(-\delta^{-}),\, u_{2}'(\delta^{-})=u_{1}'(\delta^{+}).
\end{array}\right.
\end{equation}
Note that, since $u_2$ depends only on the (normal) derivative of $u_1$, then, it does not depend on the value of $c_1$. But the value of $c_1$ is chosen such that compatibility condition holds.

\subsubsection{Terms Corresponding  to $\eta^{-j}$, $j\geq2$}

In order to determinate the other parts of $u_{2}$ we need the term of $\eta$ but this procedure is similar for the case of $u_j$ with $j\geq 2$, so we present the deduction for general $u_j$ with $j\geq 2$. Thus we have that 
\begin{equation} 
\int_{-1}^{-\delta}u'_{j}v'+\int_{-\delta}^{\delta}u'_{j+1}v'+\int_{\delta}^{1}u'_{j}v'=0,\quad\mbox{for all }v\in H_{0}^{1}(-1,1). \label{wf u3}
\end{equation}
Again, if we restrict this last equation to $v\in H_{0}^{1}(-1,-\delta)$ and $v\in H_0^1(\delta,1)$ to be 
defined in (\ref{eq5 wf u1}), we have
\[
\int_{-1}^{-\delta}u'_{j}v'=0, \mbox{ for all }v\in H_{0}^{1}(-1,-\delta),
\]
and
\[
\int_{\delta}^{1}u'_{j}v'=0, \mbox{ for all }v\in H_{0}^{1}(\delta,1),
\]
respectively. Where each integral is a weak formulation to the Dirichlet problem
\begin{equation}
\left\{ \begin{array}{c}
-u_{j}''=0, \quad \mbox{ in } (-1,\delta), \\
u_{j}(-1)=0,\\
u_{j}(-\delta^{-})=u_{j}(-\delta^{+})=\widetilde{u}_j(-\delta^{+})+c_j,
\end{array}\right.\label{eq16 D-N u2}
\end{equation}
and
\begin{equation}
\left\{ \begin{array}{l}
-u_{j}''=0, \quad \mbox{ in } (\delta,1),\\
u_{j}(\delta^{+})=u_{j}(\delta^{-})=\widetilde{u}_j(\delta^{-})+c_j,\\
u_{j}(1)=0.
\end{array}\right.\label{eq17 D-N u2}
\end{equation}
Following a similar argument to the one given above, we conclude that $u_{j}$ is harmonic in the intervals $(-1,-\delta)$ and $(\delta,1)$ for all $j\geq 1$ and $u_{j-1}$ is harmonic in $(-\delta,\delta)$ for $j\geq 2$. As before, we have
\[
u_{j}'(\delta^{-})-u_{j}'(-\delta^{+})=-\left[u_{j-1}'(\delta^{+})-u_{j-1}'(-\delta^{-})\right],\quad \mbox{ for all }j\geq 2.
\]
Note that $u_{j}$ is given by the solution of a Neumann problem  in $(-\delta,\delta)$. Thus, the function take the form $u_{j}=\widetilde{u}_{j}+c_{j}$, with
$c_{j}$ being a integration constant, though, $\widetilde{u}_{j}$ is a function
with the property that its integral is $0$, i.e.,
\[
\int_{-\delta}^{\delta}\widetilde{u}_{j}=0,\quad\mbox{ for all }j\geq 2.
\]
In this way, we need to determinate the value of constant $c_{j}$, for this, we substitute the function $u_{j}$ with a total function $\widetilde{u}_{j}+c_{j}$. Note that $\widetilde{u}_j$ solves the Neumann problem in $(-\delta,\delta)$\\
\begin{equation}\label{Neumann u_j-New}
\int_{-\delta}^{\delta}\widetilde{u}_j'v'=-\left[u_{j-1}'(\delta^{+})-u_{j-1}'(-\delta^{-})\right],\quad\mbox{for all }v\in H^1(-\delta,\delta).
\end{equation}

Since $c_{j}$, with $j=2,\dots$, are constants, their harmonic extensions are given by $c_{j}\chi$ in $(-1,1)$; see Remark \ref{RemarkNew}, We have 
\[
u_j=\widetilde{u}_j+c_j\chi,\quad\mbox{in }(-\delta,\delta).
\]
This complete the construction of $u_{j}$.\\

From the equation \eqref{wf u3}, and solutions of Dirichlet problems \eqref{eq16 D-N u2} and \eqref{eq17 D-N u2} we have
\begin{equation} \label{Weakneumann u_j+1}
\int_{-\delta}^{\delta}u_{j+1}'v'=u_j'(-\delta^{-})v(-\delta^{-})-u_j'(\delta^{+})v(\delta^{+})
\end{equation}
This equation is the weak formulation to the Neumann problem
\begin{equation}\label{uj+1forcompa}
\left\{ \begin{array}{c}
-u_{j+1}''=0,\quad \mbox{ in } (-\delta,\delta)\\
u_{j+1}'(-\delta^{+})=u'_{j}(-\delta^{-}),\, u_{j+1}'(\delta^{-})=u'_{j}(\delta^{+}).
\end{array}\right.
\end{equation}
As before, we have
\[
u_{j+1}'(\delta^{-})-u_{j+1}'(-\delta^{+})=-\left[u_j'(\delta^{+})-u_j'(-\delta^{-})\right].
\]
The compatibility conditions need to be satisfied. Observe that
\begin{eqnarray*}
u_{j+1}'(\delta^{-})-u_{j+1}'(-\delta^{+}) & = & -\left[u_j'(\delta^{+})-u_j'(-\delta^{-})\right]\\
& = & -\widetilde{u}_j'(\delta^{+})-c_j\chi'(\delta^{+})+\widetilde{u}_j'(-\delta^{-})+c_j\chi'(-\delta^{-})\\
& = & \widetilde{u}_j'(-\delta^{-})-\widetilde{u}_j'(\delta^{+})+c_j\left[\chi'(-\delta^{-})-\chi'(\delta^{+})\right]\\
& = & 0.
\end{eqnarray*}
By the latter we conclude that, in order to have the compatibility condition of \eqref{uj+1forcompa} it is enough to set,
\[
c_j=-\frac{\widetilde{u}_j'(-\delta^{-})-\widetilde{u}_j'(\delta^{+})}{\chi'(-\delta^{-})-\chi'(\delta^{+})}.
\]
We can choose $u_{j+1}$ in $(-\delta,\delta)$ such that
\[
u_{j+1}=\widetilde{u}_{j+1}+c_{j+1},\quad\mbox{ where }\int_{-\delta}^{\delta}\widetilde{u}_{j+1}=0,
\]
and $\widetilde{u}_{j+1}$ solves the Neumann problem
\begin{equation}\label{Neumann u_j+1-New}
\int_{-\delta}^{\delta}\widetilde{u}_{j+1}'v'=-\left[u_{j}'(\delta^{+})-u_{j}'(-\delta^{-})\right],\quad\mbox{for all }v\in H^1(-\delta,\delta).
\end{equation}

and, as before
\[
c_{j+1}=-\frac{\widetilde{u}_{j+1}'(-\delta^{-})-\widetilde{u}_{j+1}'(\delta^{+})}{\chi'(-\delta^{-})-\chi'(\delta^{+})}.
\]

\subsection{Illustrative Example in One Dimension}

In this part we show a simple example of the weak formulation with the purpose of illustrating of the development presented above. First take the next (strong) problem 
\begin{equation}\label{example1DS}
\left\{ \begin{array}{c}
\left(\kappa \left(x\right)u'\left(x\right)\right)'=0, \quad \mbox{ in } (-2,2),\\
u\left(-2\right)=0,\, u\left(2\right)=4,
\end{array}\right.
\end{equation}
with the function $\kappa \left(x\right)$ defined by 
\[
\kappa \left(x\right)=\left\{ \begin{array}{ll}
1,&-2\leq x<-1,\\
\eta,&-1\leq x<1,\\
1,&1\leq x\leq2.
\end{array}\right.
\]
The weak formulation for problem \eqref{example1DS} is to find a function $u\in H^{1}(-2,2)$ such that  
\begin{equation}
\left\{ \begin{array}{c}
\int_{-2}^{2} \kappa (x)u'(x)v'(x)dx=0\\
u(-2)=0,\, u(2)=4,
\end{array}\right.\label{example1DW}
\end{equation}
for all $v\in H_{0}^{1}(-2,2)$.\\

Note that the boundary condition is not homogeneous but this case is similar and only the term $u_0$ inherit a non-homogeneous boundary condition.\\

As before, for $j=0$ we have the equation \eqref{eq5 wf u1}, then  $u_{0}$ is constant in $(-1,1)$ and we write the decomposition $u_{0}=c_0\chi+u^{(1)}+u^{(2)}$, and if $v=\chi\in H_{0}^{1}(-2,2)$ from equation \eqref{decomposition u0} we have that $c_0=2$.\\

Now, similarly, we can take $v=v^{(1)}\in H_{0}^{1}(-2,-1)$ and we obtain the weak formulation of Dirichlet problem 
\[
\left\{ \begin{array}{c}
-\left(u^{(1)}\right)''=0, \quad \mbox{ in } (-2,-1),\\
u^{\left(1\right)}(-2)=0,\, u^{\left(1\right)}(-1)=0,
\end{array}\right.
\]
that after integrating directly twice gives a linear function $u^{(1)}=\alpha_1(x+2)$ in $(-2,-1)$ and $\alpha_1$ is an integration constant. Using the boundary data we have $u^{(1)}=0$.\\

If $v=v^{(2)}\in H_{0}^{1}(1,2)$ we have the weak formulation to the Dirichlet problem  (\ref{eq:dirproblemastrong1}) that in this case becomes,
\[
\left\{ \begin{array}{c}
-\left(u^{(2)}\right)''=0, \quad \mbox{ in } (1,2),\\
u^{(2)}(1)=0,\, u^{(2)}(2)=4.
\end{array}\right.
\]
Integrating twice in the interval $(1,2)$ we obtain the solution,  $u^{(2)}=\alpha_{2}(x-1)$, which becomes $u^{(2)}=4(x-1)$. We use the boundary condition to determine the integration constant $\alpha_2$. Then, we find the decomposition for $u_{0}=c_0\chi+u^{(1)}+u^{(2)}$ for each part of interval which is given by 
\[
u_{0}=\left\{ \begin{array}{ll}
2(x+2), & x\in (-2,-1),\\
2,& x\in (-1,1),\\
2x,& x\in (1,2).
\end{array}\right.
\]
With the $u_0$ already computed we can to obtain the boundary data for the  Neumann problem that determines $u_1$  in the equation \eqref{eq11 Neumann for u1}.  We have
\[
\left\{ \begin{array}{c}
-u_{1}''=0, \quad \mbox{ in } (-1,1),\\
u_{1}'(-1)=2,\, u_{1}'(1)=2.
\end{array}\right.
\]
As before, we see that the compatibility condition holds and therefore it has solution in the interval $(-1,1)$. Easy calculation gives $u_{1}=2x$. Computing the constant $c_1$, we consider the Neumann problem \eqref{problem u'1}, which has the solution $\widetilde{u}_1=2x$. By the definition of $u_1=\widetilde{u}_1+c_1$ we conclude that $c_1=0$.  Now, in order to compute $u_1$ in the $(-2,-1)$ and $(1,2)$ we apply the condition in \eqref{eq13 wf u2}. It follows from the Dirichlet problems \eqref{eq14 D-N u1} and \eqref{eq15 D-N u1}, that the solutions are $u_{1}=-2(x+2)$ and $u_{1}=-2x+4$.
We summarize the expression for $u_1$ as, 
\[
u_{1}=\left\{ \begin{array}{ll}
-2(x+2),& x\in (-2,-1),\\
2x,& x\in (-1,1),\\
-2(x-2),& x\in (1,2).
\end{array}\right.
\]
Returning to equation \eqref{eq13 wf u2} we can obtain $u_{2}$ in the interval $(-1,1)$ by solving the Neumann problem
\[
\left\{ \begin{array}{c}
-u_{2}''=0 \mbox{ in } (-1,1),\\
u_{2}'(-1)=-2,\, u_{2}'(1)=2.
\end{array}\right.
\]
Note again that the compatibility condition holds. The solution is given by $u_{2}=-2x$. Computing the constant $c_2$, we consider the Neumann problem \eqref{Neumann u_j-New} for $j=2$, has the solution $\widetilde{u}_2=-2x$. Note that we assume the boundary conditions of Dirichlet problems \eqref{eq16 D-N u2} and \eqref{eq17 D-N u2}. Then $c_{2}=0$. So, in order to find $u_2$ in the intervals $(-2,-1)$ and $(1,2)$, we apply the condition in \eqref{wf u3}. It follows from \eqref{eq16 D-N u2} and \eqref{eq17 D-N u2} that the solutions are $u_{2}=2(x+2)$ and $u_{2}=2(x-2)$, respectively.\\

For this case of terms $u_{j+1}$ with $j\geq2$, we consider the Neumann problem \eqref{uj+1forcompa} with solution $u_{j+1}=\pm 2x$ in $(-1,1)$. Note that the compatibility condition holds. Again we recall the Neumann problem \eqref{Neumann u_j+1-New} with solution $\widetilde{u}_{j+1}=\pm 2x$ in this case. We conclude that $c_j=0$ for each $j=2,3\dots$. Again in order to find $u_{j+1}$ in the intervals $(-2,-1)$ and $(1,2)$. It follows of analogous form above that the solutions $u_{j+1}=\mp 2(x+2)$ in $(-2,-1)$ and $u_{j+1}=\mp 2(x-2)$ in $(1,2)$, for $j=2,3,\dots$. So, we can be calculated following terms of the power series.\\

We note that above we computed an approximation of the solution by solving local problems to the inclusion and the background. In this example, we can directly compute the solution for the problem \eqref{example1DS} and verify that the expansion is correct. We have
\[
u(x)=\alpha\int_{-2}^{x}\frac{1}{\kappa (t)}dt.
\]
Then with boundary condition of problem \eqref{example1DS} we calculate a constant $\displaystyle \alpha=\frac{2\eta}{1+\eta}$, so we have 
\begin{equation}
u\left(x\right)=\left\{ \begin{array}{ll}
\displaystyle\frac{\eta}{1+\eta}2\left(x+2\right)& x\in[-2,-1),\\
\displaystyle \frac{\eta}{1+\eta}2\left[1+\frac{1}{\eta}\left(x+1\right)\right]& x\in[-1,1),\\
\displaystyle \frac{\eta}{1+\eta}2\left[1+\frac{2}{\eta}+\left(x-1\right)\right]& x\in[1,2].
\end{array}\right. \label{Exact solution exa}
\end{equation}
Recall that the term $\frac{\eta}{1+\eta}$ can be written as a power series given by
$\frac{\eta}{1+\eta}=\sum_{j=0}^{\infty}\frac{(-1)^{j}}{\eta^{j}}$.\\

By inserting this expression into \eqref{Exact solution exa} and after some manipulations, we have 
\begin{equation}\label{Series1D}
u(x)=\left\{ \begin{array}{ll}
\displaystyle 2(x+2)\sum_{j=0}^{\infty}\frac{(-1)^{j}}{\eta^{j}}, & x\in[-2,-1),\\
\displaystyle 2-2x\sum_{j=1}^{\infty}\frac{(-1)^{j}}{\eta^{j}}, & x\in[-1,1),\\
\displaystyle 2x\sum_{j=0}^{\infty}\frac{(-1)^{j}}{\eta^{j}}-4x\sum_{j=1}^{\infty}\frac{(-1)^{j}}{\eta^{j}}, & x\in[1,2].
\end{array}\right.
\end{equation}
We can rewrite this expression to get
\begin{equation}
u\left(x\right)=\underset{u_{0}}{\underbrace{\left\{ \begin{array}{ll}
2\left(x+2\right)\\
2\\
2x
\end{array}\right\} }}+\frac{1}{\eta}\underset{u_{1}}{\underbrace{\left\{ \begin{array}{c}
-2\left(x+2\right)\\
2x\\
-\left(2x-4\right)
\end{array}\right\} }}+\frac{1}{\eta^{2}}\underset{u_{2}}{\underbrace{\left\{ \begin{array}{c}
2\left(x+2\right)\\
-2x\\
\left(2x-4\right)
\end{array}\right\} }}+\cdots,\label{expand series exact sol}
\end{equation}
Observe that these were the same terms computed before by solving local problems in the inclusions and the background.

\section{An Application}

We return to the two dimension case and show some  calculations, that may be useful for applications. In particular, we compute the energy of the solution. Other functionals of the solution can be also considered as well.

\subsection{Energy of Solution}

In this section we discuss about the energy of solutions for the elliptic problems. This is special application for our study. We show the case for problems with high-contrast coefficient, now we consider the problem \eqref{eq:problem}
\[
\int_D\kappa(x)\nabla u(x) \cdot \nabla v(x)dx=\int_Df(x)v(x)dx, \mbox{ for all }v\in H_0^1(D),
\]
if we replace again $u(x)=u_\eta(x)$ and additionally, the test function $v(x)=u_\eta(x)\in V_\const$ we have 
\begin{equation}\label{solenergy1}
\int_D\kappa(x) \nabla u_\eta(x) \cdot \nabla u_\eta(x)dx=\int_Df(x)u_\eta(x)dx, \mbox{ for all }u_\eta\in V_\const,
\end{equation}
with $u_\eta(x)=0$ on $\partial D$. We rewrite the equation \eqref{solenergy1}
\begin{equation}\label{solenergy2}
\int_D\kappa(x) |\nabla u_\eta(x)|^2dx=\int_Df(x)u_\eta(x)dx, \mbox{ for all }u_\eta\in H_0^1(D).
\end{equation}
By the equation \eqref{solenergy2} we consider, for simplicity the integral on the right side and $u_\eta$ defined in \eqref{expansionu_eta}, so
\begin{eqnarray*}
\int_Df(x)u_\eta(x)dx & = &\int_Df(x)\left(\sum_{j=0}^{\infty}\eta^{-j}u_j(x)\right)dx\\
& = & \sum_{j=0}^{\infty}\eta^{-j}\int_Df(x)u_j(x)dx\\
& = & \int_Df(x)u_0(x)dx+\frac{1}{\eta}\int_Df(x)u_1(x)dx+\cdots.
\end{eqnarray*}  
First we assume that $u_0\in V_\const$ for the integral
\begin{equation}
\int_Df(x)u_0(x)dx=\int_D|\nabla u_0(x)|^2dx, \mbox{ for all }u_0\in V_\const.
\end{equation}
Now if $u_0\in V_\const$ and  we consider the equation \eqref{arreglo2} in the integral $\frac{1}{\eta}\int_Dfu_1$, we have 
\begin{equation}
\frac{1}{\eta}\int_Df(x)u_1(x)dx = \frac{1}{\eta}\left(\int_{D_0}\nabla u_1(x)\cdot \nabla u_0(x)dx+\int_{D_1}|\nabla u_1(x)|^2\right)dx, \mbox{ for all }u_0\in V_\const.
\end{equation}
Observe that if $u_0=u_{0,0}+c_0\chi_{D_1}$ given in \eqref{eq:formulau0} we have
\[
\int_{D_0}\nabla u_0(x) \cdot \nabla u_1(x)dx=\int_{D_0}\nabla u_{0,0}(x)\cdot \nabla u_1(x)dx+c_0\int_{D_0}\nabla \chi_{D_1}(x) \cdot \nabla u_1(x)dx,
\]
where
\begin{equation}\label{energyu00}
\int_{D_0}\nabla u_{0,0}(x)\cdot \nabla u_1(x)dx=0,
\end{equation}
for $u_{0,0}\in H_0^1(D_0)$ and
\begin{equation}\label{energychi}
\int_{D_0}\nabla \chi_{D_1}(x)\cdot \nabla u_1(x)dx=0,
\end{equation}
this is because we consider the problem
\begin{equation}\label{energyu1}
\left\{ \begin{array}{ll}
-\Delta u_1(x)=0,&\mbox{ in }D_0\\
\hspace{0.25in}u_1(x)=0,&\mbox{ on }\partial D_{0}.
\end{array}\right.
\end{equation}
By the weak formulation of the equation \eqref{energyu1} with test function $z\in H_0^1(D_0)$, in particular $z=\chi_{D_1}$ we have the result. Thus by the equations \eqref{energyu00} and \eqref{energychi} we conclude that
\[
\int_{D_0}\nabla u_0(x) \cdot \nabla u_1(x)dx=0.
\]
Thereby, we find an expression for the energy of the solution in terms of the expansion as follows
\[
\int_{D}\kappa(x)\left|\nabla u_\eta(x)\right|^2dx=\int_{D_0}|\nabla u_0(x)|^2dx+\frac{1}{\eta}\int_{D_1}|\nabla u_1(x)|^2dx+{\cal O}\left(\frac{1}{\eta^2}\right).
\]



\chapter{Numerical Computation of Expansions Using the Finite Element Method}\label{Chapter3}

\minitoc

This chapter is dedicated to illustrate numerically some examples of the elliptic problems with high-contrast coefficients. We recall substantially procedures of the asymptotic expansion in the previous chapter. We develop the numerical approximation of few terms of the asymptotic expansion through the Finite Element Methods, which was implemented in a code of \textsc{MatLab} using PDE toolbox, see Appendix \ref{AppendixD} for a \textsc{MatLab} code.

\section{Overview of the Using of the Finite Element Method}\label{Sec:3.1}

In this section we describe the development of the numerical implementation for example used in the Section \ref{SectionNumerical}. We refer to techniques of the approximation of the Element Finite Method to find numerical solutions. In this case we relates few terms of the asymptotic expansion with Dirichlet and Neumann problems generate in the sub-domains.\\

The first condition is that the triangulation resolves the different geometries presented in different examples of the Section \ref{SectionNumerical}. The triangulation of the implementation is well defined and conforming geometrically in the domain. For more details of the triangulation see Appendix \ref{AppendixC}.\\

Configuring the geometry for each type of domain, we solve the problem \eqref{Chimultiinclusions} (for the case in one inclusion, see \eqref{Chioneinclusion}), using Finite Elements on the background, in general the background is considered $D_0$ for all examples.\\

The second condition we compute $\mathbf{A}_{\geom}$ defined in \eqref{Def:aml} and $\mathbf{b}$ defined in \eqref{Def:bl}. Then we can find  the vector $\mathbf{X}$ defined in \eqref{Def:X}.\\

We compute the function $u_{0,0}$ using Finite Element in the background $D_0$. We recall that $u_{0,0}$ solves the problem defined in \eqref{Def:u00}. Then we construct the function $u_{0}=u_{0,0}+\sum_{m=1}^{M}c_m\chi_{D_m}$.\\

For the other terms in the expansion we solve using Finite Element the corresponding problems \eqref{Def:aml2} and \eqref{Def:ujDiric}. Finally, we compute the corresponding constants in \eqref{Def:uj} by solving $\mathbf{A}_{\geom}\mathbf{X}_j=\mathbf{Y}_j$.

\section{Numerical Computation of Terms of the Expansion in Two Dimensions}\label{SectionNumerical}

In this section we show some examples of the expansion terms in two dimensions. In particular, few terms are computed numerically using a Finite Element Method.\\

For details on the Finite Element Method, see the Appendix \ref{AppendixC}, and the general theory see for instance \cite{johnson2012numerical, Galvis-book2009}. We recall that, apart for verifying the derived asymptotic expansions numerically, these numerical studies are a first step to understand the approximation $u_0$ and with this understanding then device efficient numerical approximations for $u_0$ (and then for $u_\eta$).

\begin{figure}[!h] 
\begin{centering}
\includegraphics[scale=0.4]{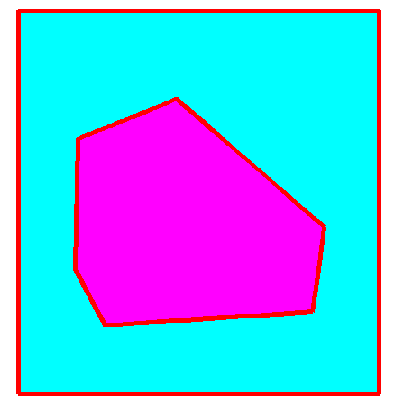}
\caption{Example of geometry configurations with interior inclusion to 
solve   the problem \eqref{Strongexample} with $u(x)=0$ on $\partial D$ and the coefficient 
$\kappa$ in \eqref{eq:coeff1inc-multiple}.
} \label{Figure1}
\end{centering}
\end{figure}

\subsection{Example 1: One inclusion}

\begin{figure}[!h]
\begin{centering}
\subfloat[$u_{0}$]{\begin{centering}
\includegraphics[scale=0.2]{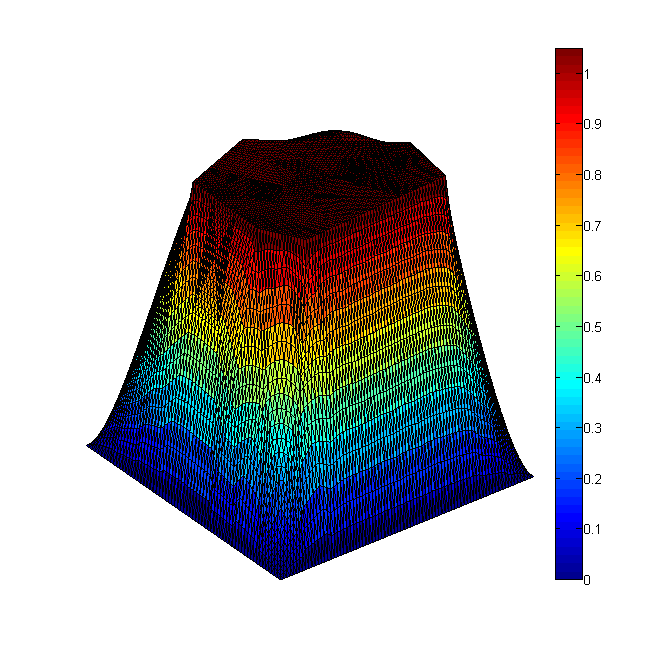}
\par\end{centering}

}\subfloat[$u_{1}$]{\begin{centering}
\includegraphics[scale=0.2]{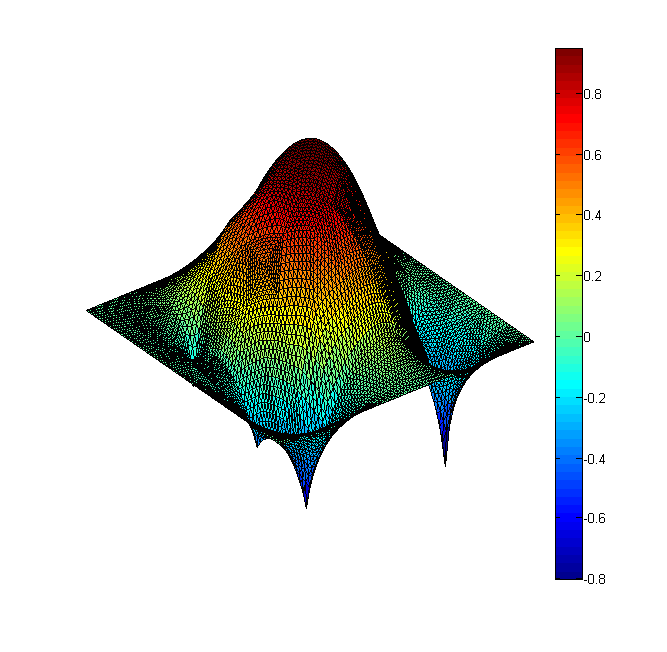}
\par\end{centering}

}
\par\end{centering}

\begin{centering}
\subfloat[$u_{2}$]{\begin{centering}
\includegraphics[scale=0.2]{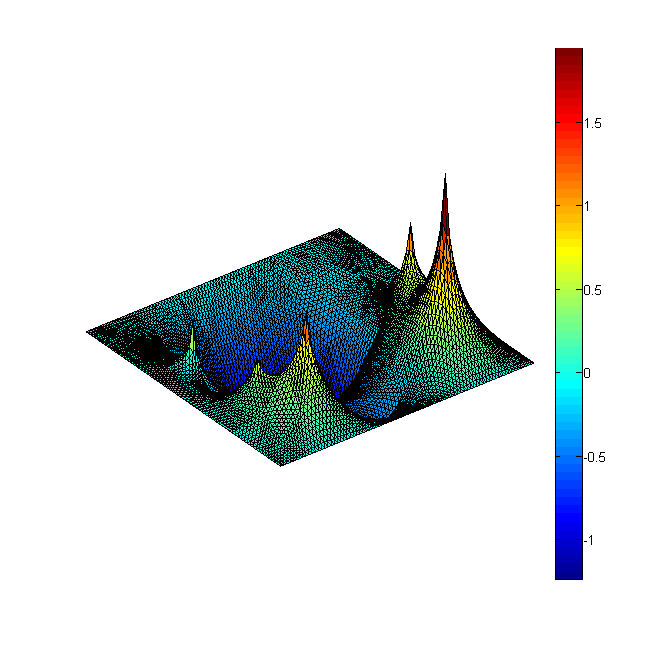}
\par\end{centering}

}\subfloat[$u_{3}$]{\begin{centering}
\includegraphics[scale=0.2]{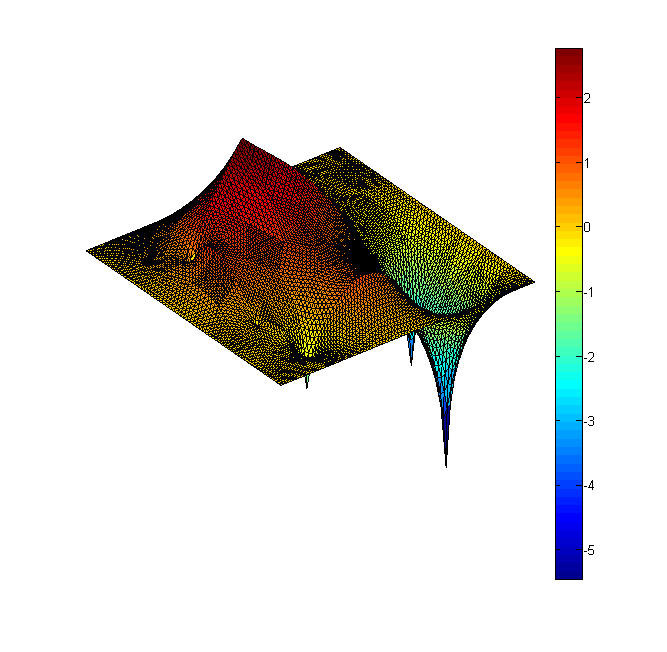}
\par\end{centering}

}
\par\end{centering}
\caption{Few terms of the asymptotic expansion \eqref{expansionu_eta} for the solution
of \eqref{Strongexample} with $u(x)=0$ on $\partial D$ and the coefficient 
$\kappa$ in \eqref{eq:coeff1inc-multiple}  where the domain configuration is  illustrated in Figure (\ref{Figure1}).
Here we consider $\eta=10$.} \label{Figure2}
\end{figure}

We numerically solve the problem 
\begin{equation} \label{Strongexample}
-\dive(\kappa(x)\nabla u(x))=f(x),\ \mbox{  in }  D,  
\end{equation}
with $u(x)=0$ on $\partial D$ and the coefficient 
$\kappa$ in \eqref{eq:coeff1inc-multiple}  where the domain configuration is  illustrated in Figure \ref{Figure1}.
This configuration contains one polygonal interior inclusion. We applied the conditions above in a numerical implementation in \textsc{MatLab} using the PDE toolbox. We set $\eta=10$ and $\eta=100$ for this example.\\

\begin{figure}[!h]
\centering
\includegraphics[width=0.4\textwidth]{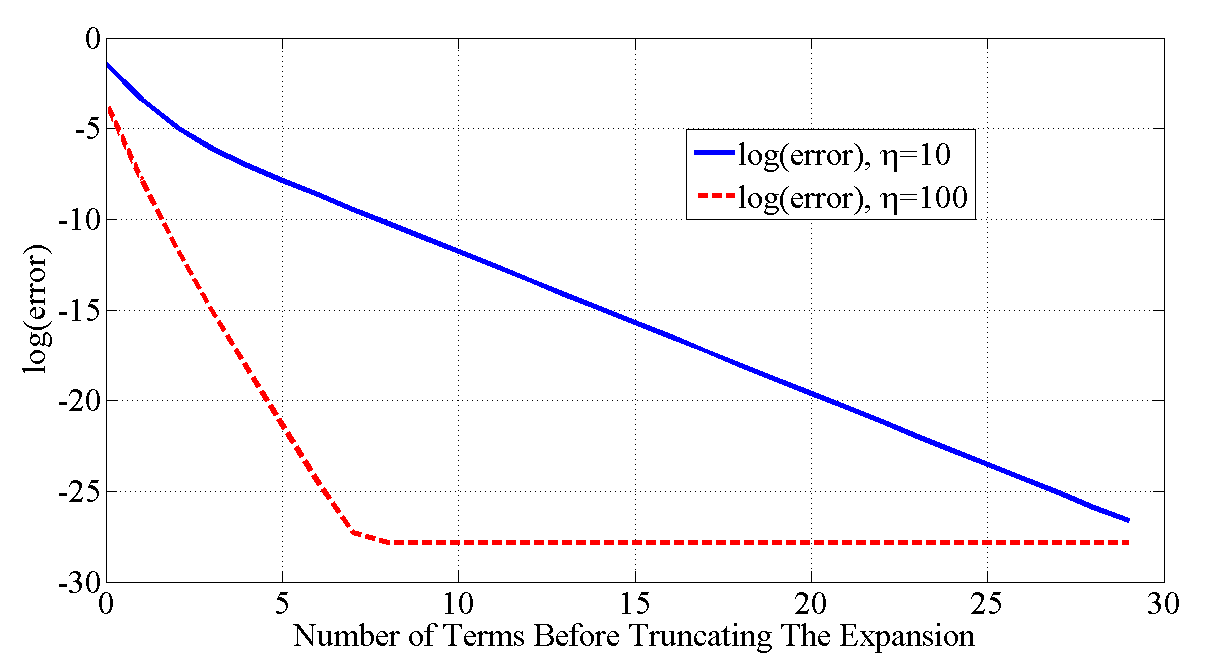}
\caption{Error of the difference of the solution $u_\eta$ computed directly and the addition of the truncation of the expansion \eqref{expansionu_eta}. We consider $\eta=10$ (solid line) and $\eta=100$ (dashed line).}
\label{fig:error} 
\end{figure}

We show the computed first four terms $u_0,u_1,u_2$ and $u_3$ of the asymptotic expansions in Figure \ref{Figure2}. In particular we show the asymptotic limit $u_0$. In addition, observe that the term $u_0$ is constant inside the inclusion region. For this case $\eta=10$ and the errors form the truncated series and the whole domain solution $u_\eta$ is reported in Figure \ref{fig:error}.  We see a linear decay of the logarithm of the error with respect to the number of terms that corresponds to the decay of the power series tail.\\


\begin{figure}[!h]
\begin{centering}
\includegraphics[scale=0.22]{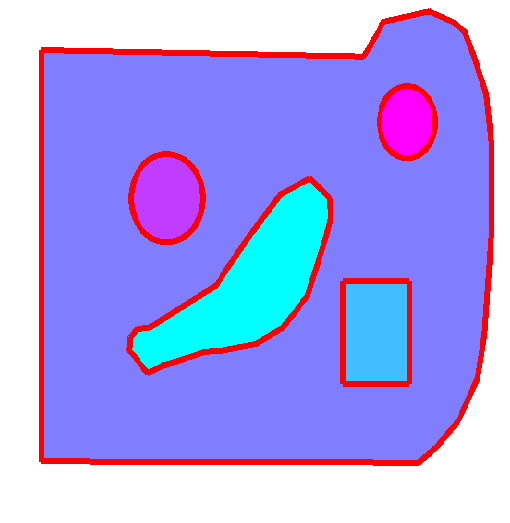}\hspace{1cm}\includegraphics[scale=0.2]{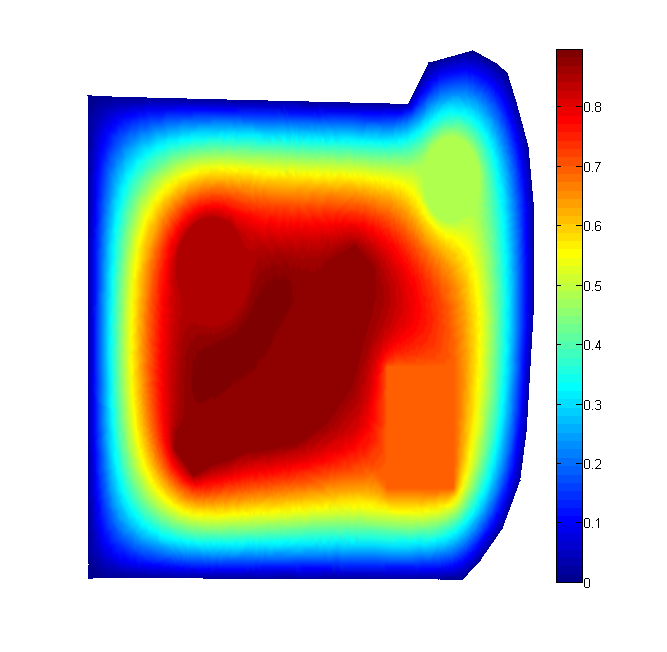}
\caption{Example of geometry configurations with interior inclusion (left picture).
Asymptotic limit $u_0$  for the solution
of \eqref{Strongexample} with $u(x)=0$ on $\partial D$ and the coefficient 
$\kappa$ in \eqref{eq:coeff1inc-multiple}  where the domain configuration is  illustrated in the left picture.
Here we consider $\eta=10$ (right picture).
} \label{Figure3}
\end{centering}
\end{figure}

\subsection{Example 2: Few Inclusions}

\begin{figure}[!h]
\begin{centering}
\subfloat[$u_{0}$]{\begin{centering}
\includegraphics[scale=0.2]{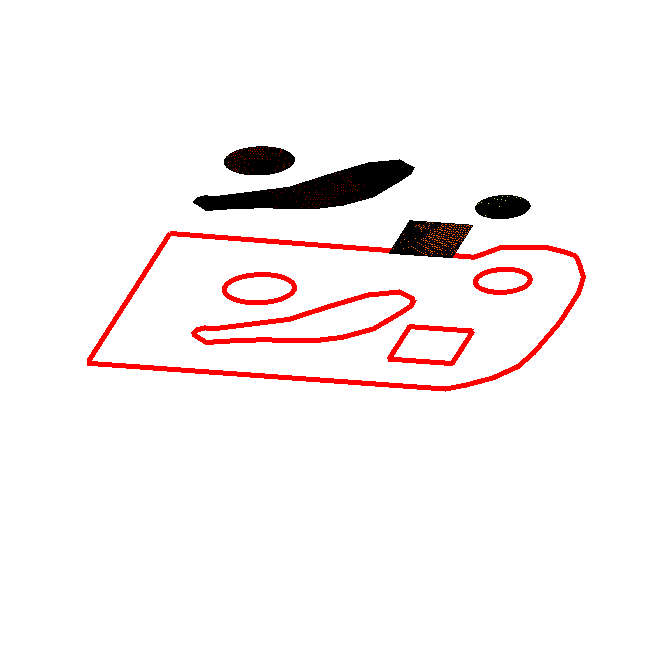}
\par\end{centering}

}\subfloat[$u_{1}$]{\begin{centering}
\includegraphics[scale=0.2]{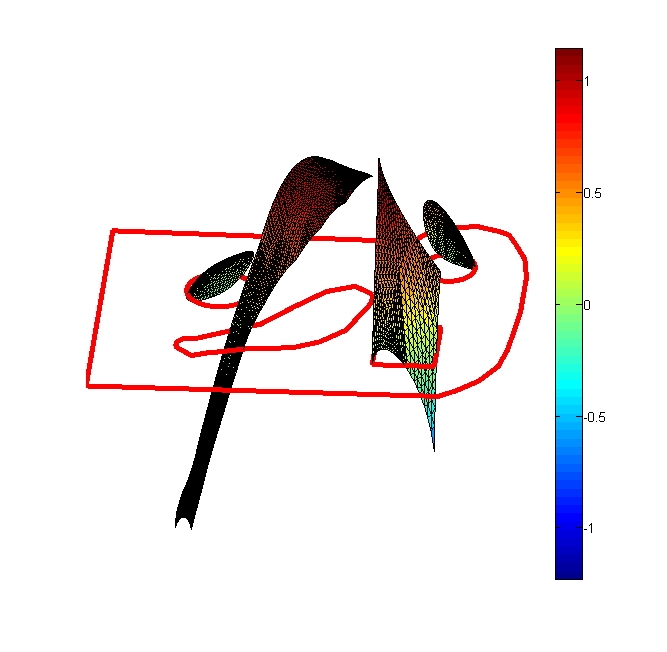}
\par\end{centering}

}\subfloat[$u_{2}$]{\begin{centering}
\includegraphics[scale=0.2]{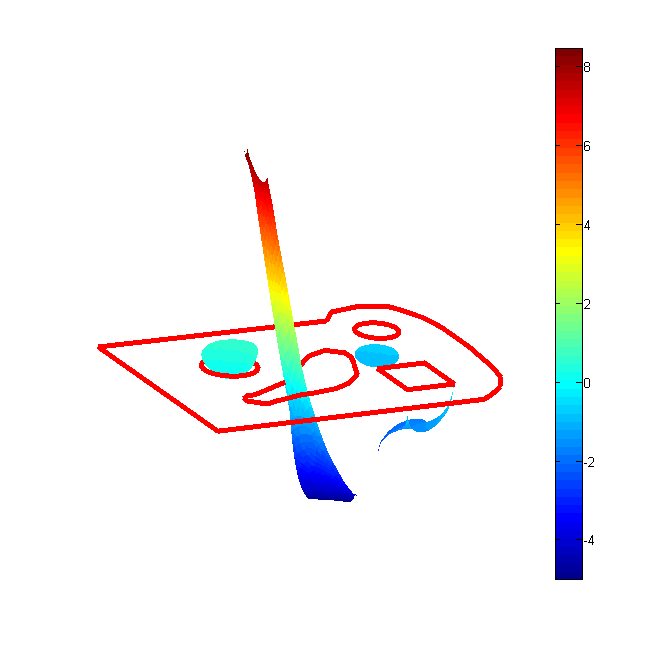}
\par\end{centering}

}
\par\end{centering}

\caption{Illustration, inside inclusions, of first few terms of the asymptotic expansion \eqref{expansionu_eta} for the solution
of \eqref{Strongexample} with $u(x)=0$ on $\partial D$ and the coefficient 
$\kappa$ in \eqref{eq:coeff1inc-multiple}  where the domain configuration is  illustrated in Figure \ref{Figure3}.
Here we consider $\eta=10$.}\label{Figure4}
\end{figure}

As a second example we consider several inclusions. The geometry for this problem posed to the defined triangulation is coupled to the finite element. The domain is illustrated in  Figure \ref{Figure3}  as well as the corresponding $u_0$ term.\\

In the Figure \ref{Figure4} we display of the asymptotic solution and show its behaviour on inclusions. Observe that the term $u_0$,  Figure \ref{Figure4} (a), is constant inside the respectively inclusions regions. This checks that $u_0$ is a combination of harmonic characteristic functions.


\subsection{Example 3: Several Inclusions}\label{Sec:3.2.3}

In this case we design a domain with several inclusions are distributed homogeneously and analyze numerically the derived asymptotic expansions.\\

We consider $D=B(0,1)$ the circle with center $(0,0)$, radius $1$ and  $36$ (identical) circular inclusions of radius $0.07$, this is illustrated in the Figure \ref{fig:figure1} (left side). Then we numerically solve the problem
\begin{equation}\label{Example1Multiscale}
\left\{\begin{array}{ll}
-\dive(\kappa(x)\nabla u_\eta(x))=1, & \mbox{ in }D\\
\hspace{0.1in}u(x)=x_1+x_2^2, & \mbox{ on }\partial D.
\end{array}\right.
\end{equation}
We show in the Figure \ref{fig:figure1} an approximation for function $u_0$ with $\eta=6$. For this case we have the behavior of term $u_0$ inside the background. We refer to the term $u_{\coarse}$ for the number of inclusions present in the geometry of problem \eqref{Example1Multiscale}.\\ 

\begin{figure}[!h]
\centering
\includegraphics[width=0.3\textwidth]{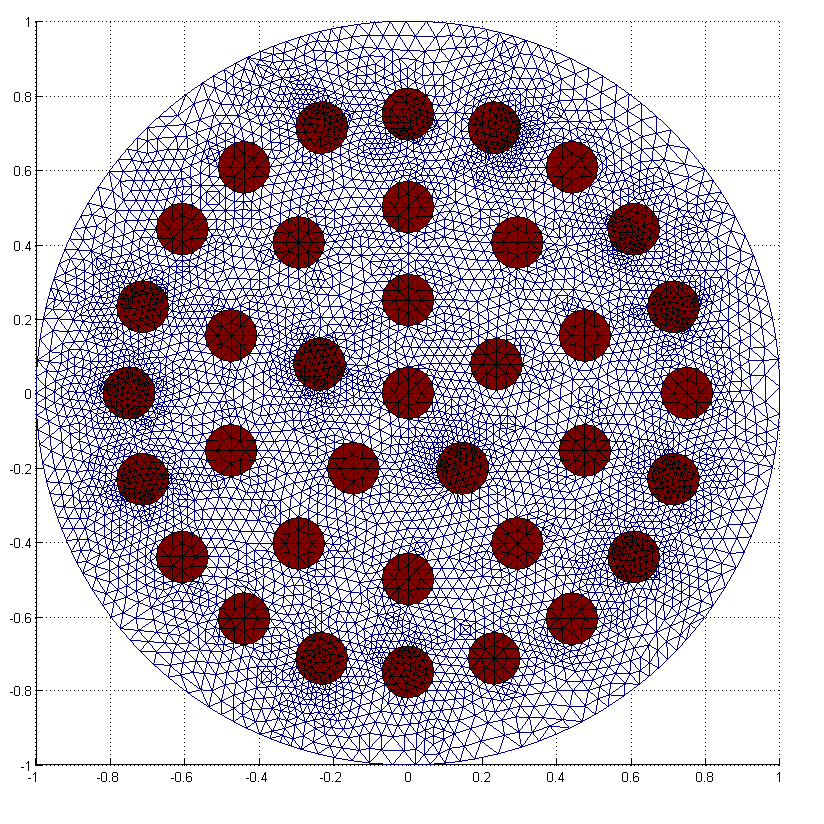}
\includegraphics[width=0.3\textwidth]{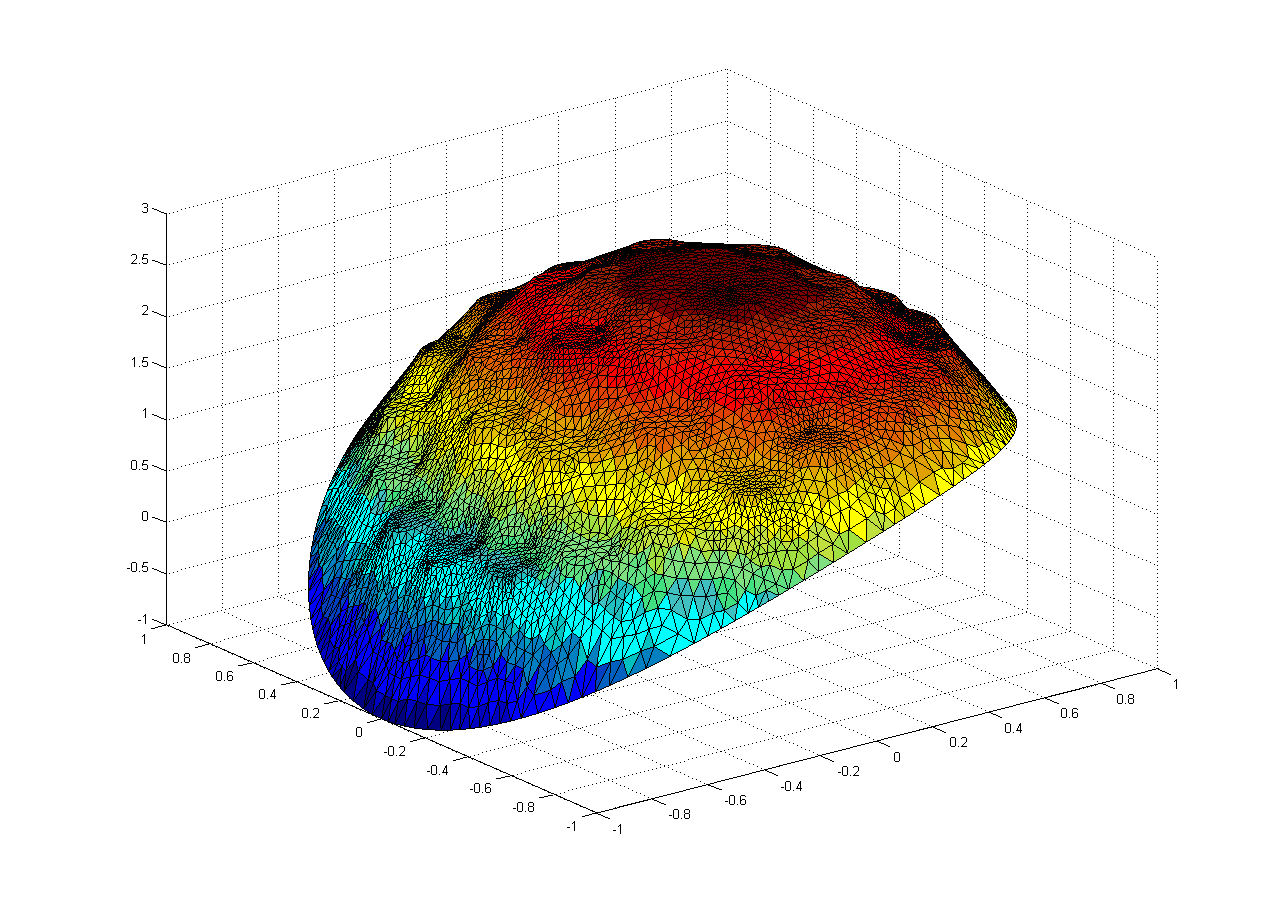}
\includegraphics[width=0.3\textwidth]{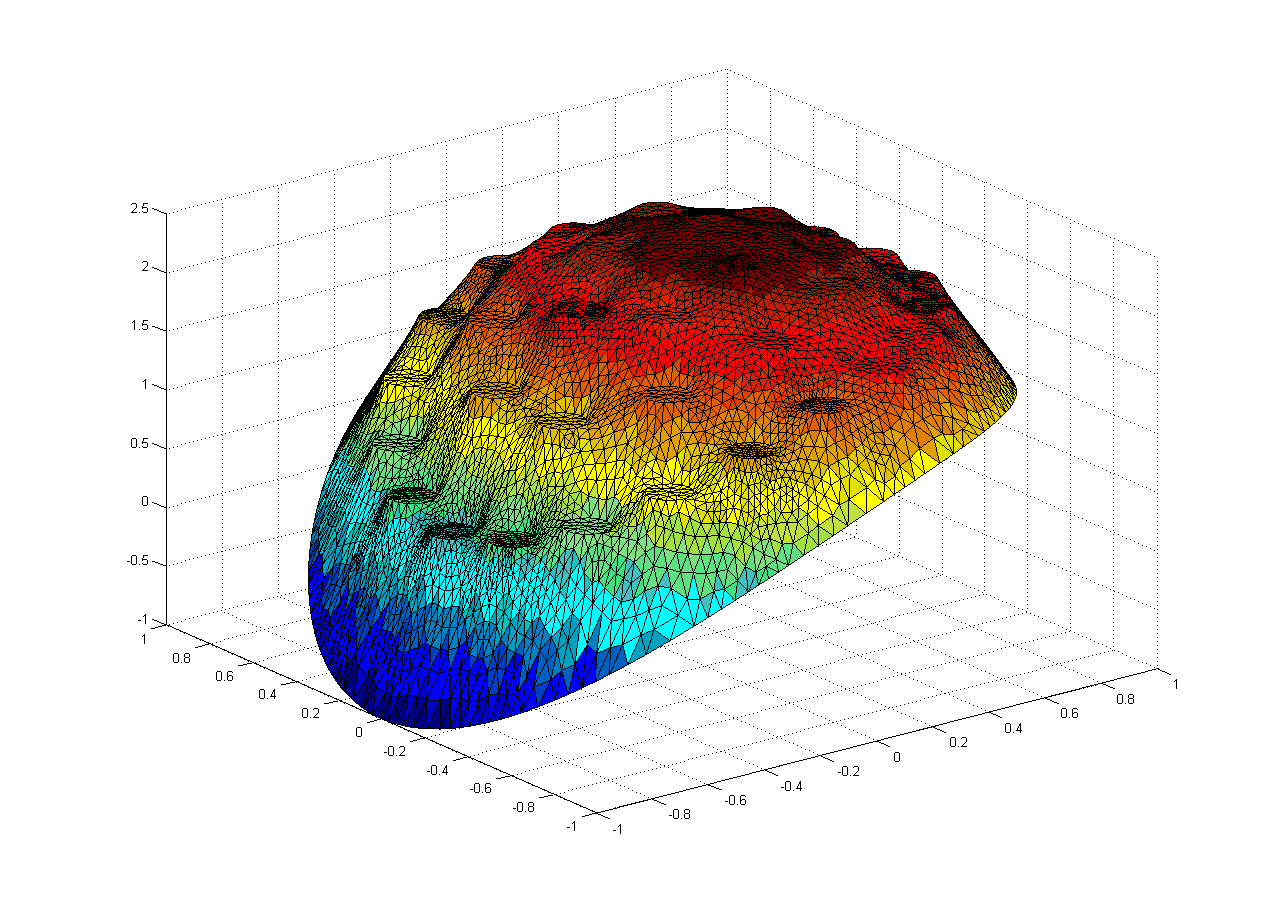}
\caption{Geometry of the problem \eqref{Example1Multiscale}. with $\eta=6$. 
Asymptotic solution $u_0=u_{00}+u_{\coarse}$ homogeneously}
\label{fig:figure1} 
\end{figure}

In the Figure \ref{fig:figure2} we show the computed two terms $u_1$ and $u_2$ of the asymptotic expansion. In the Top we plot the behavior of functions $u_1$ and $u_2$ in the background. For the Botton of the Figure \ref{fig:figure2} we show the same functions only restricted to the inclusions.\\

An interesting case for different values of $\eta$ and the number of terms needed to obtain the relative error of the approximation is reported in the Table \ref{table:3.1}. We observe that to the problem \eqref{Example1Multiscale} is necessary using, for instance with a value of $\eta=10^8$ one term of the expansion for that the relative error was approximately of the order of $10^{-8}$.

\begin{figure}[!htb]
\centering
\includegraphics[width=0.3\textwidth]{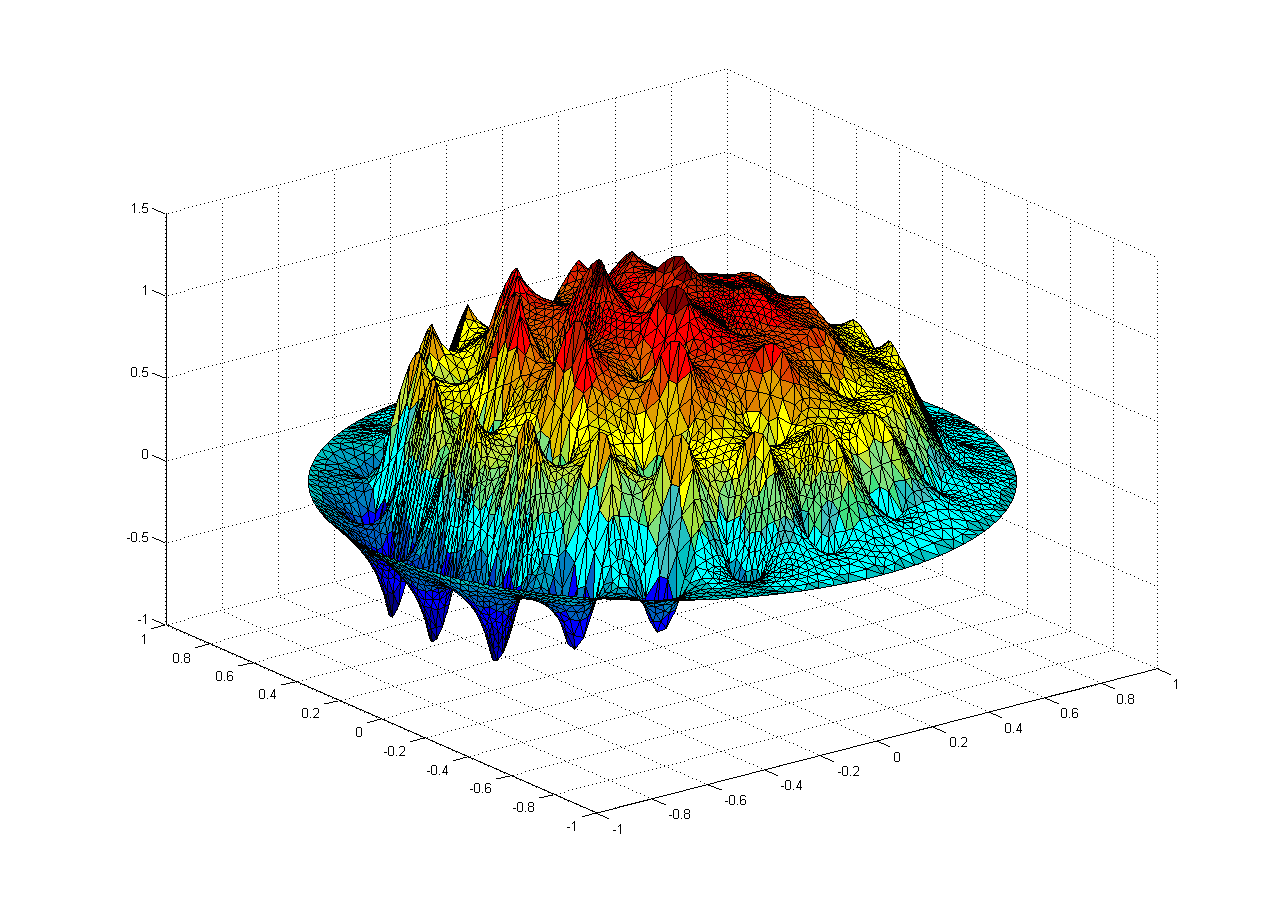}
\includegraphics[width=0.3\textwidth]{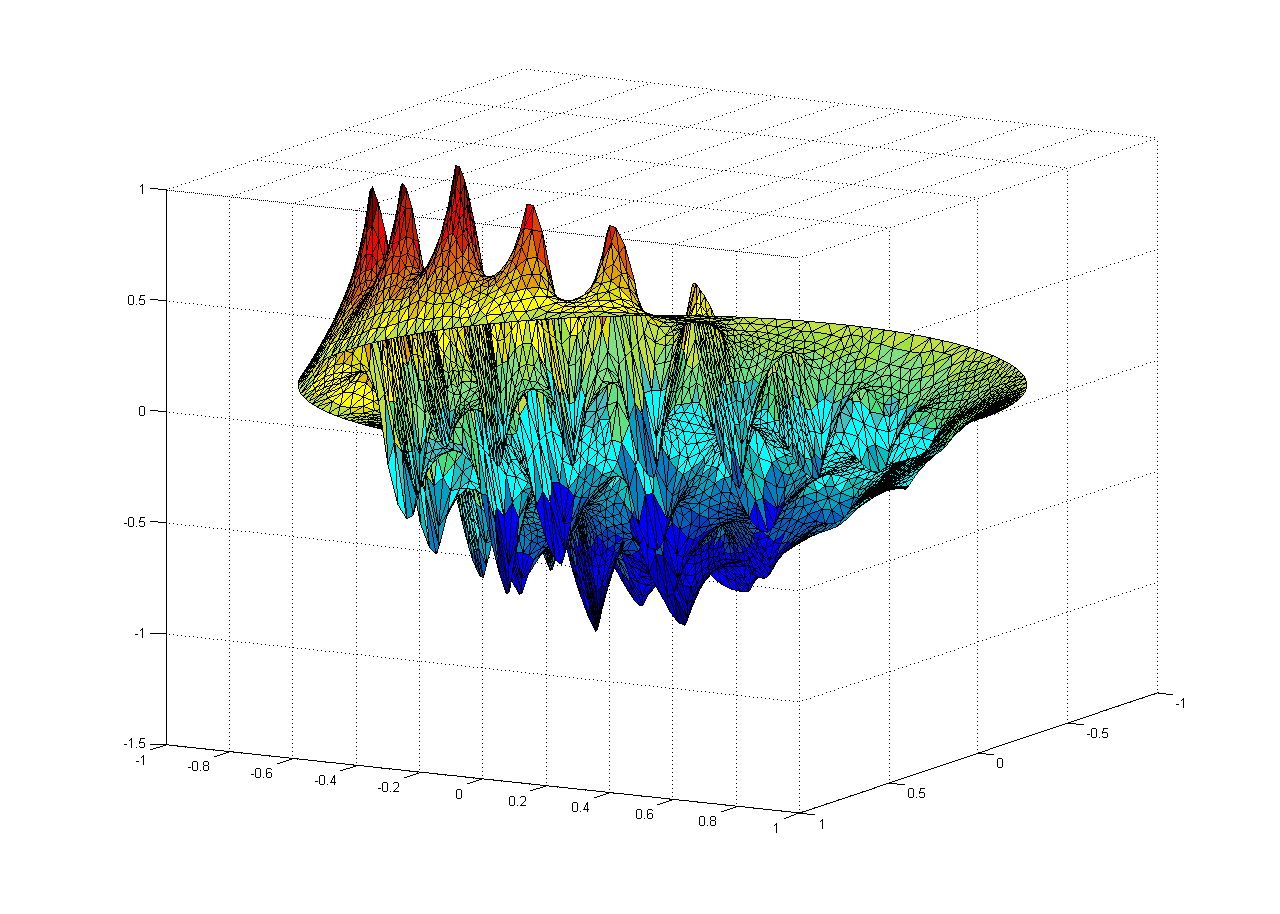}\\
\includegraphics[width=0.3\textwidth]{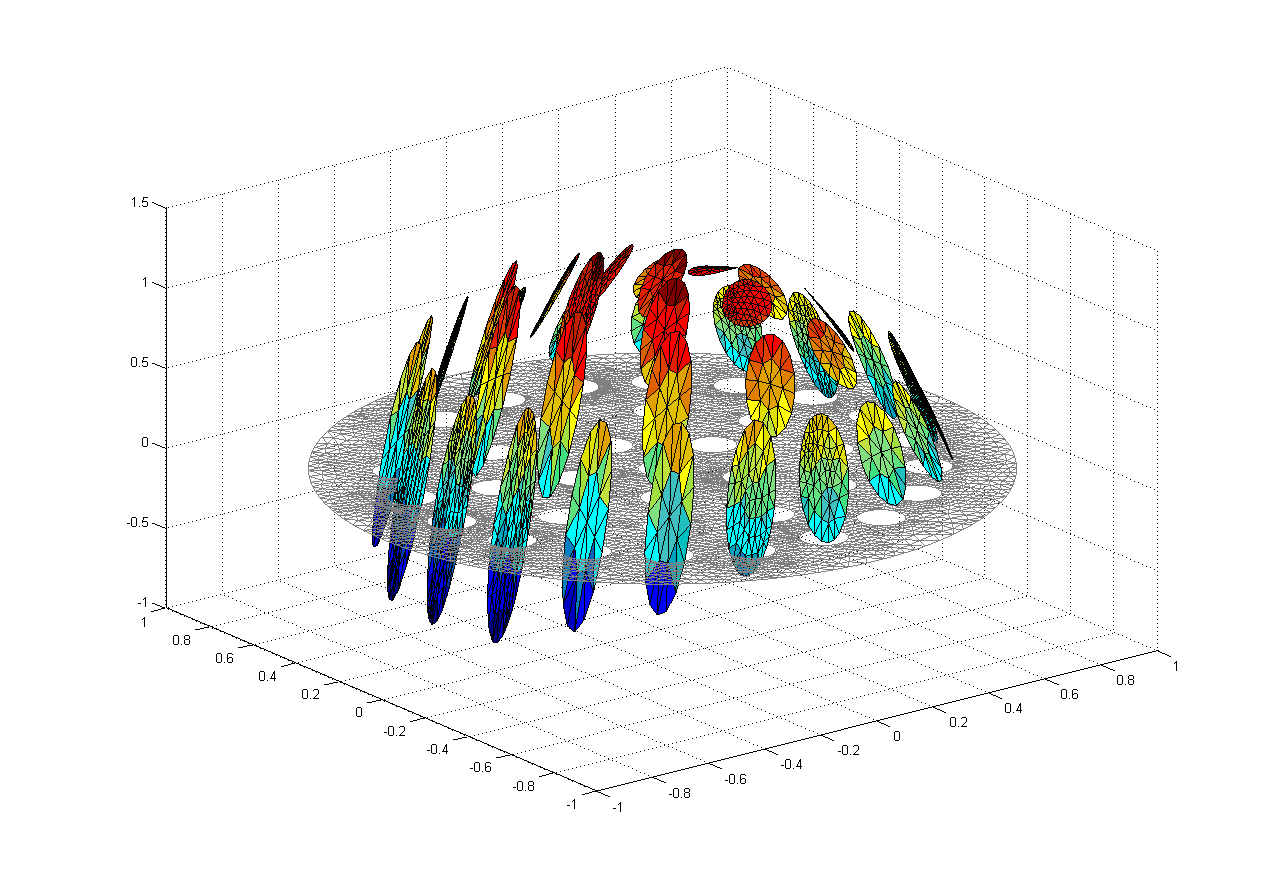}
\includegraphics[width=0.3\textwidth]{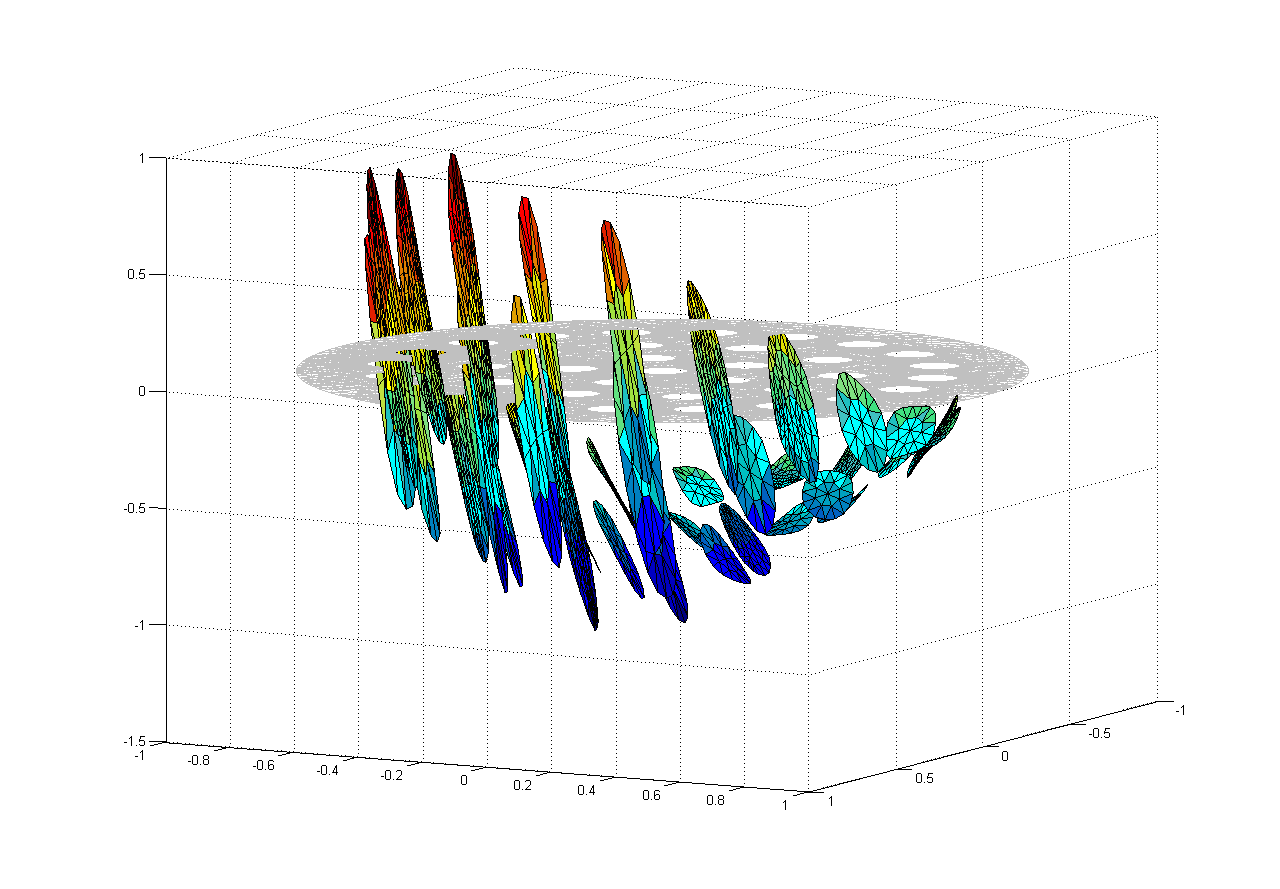}
\caption{Top: Functions $u_1$ and $u_2$. Bottom: 
Functions $u_1$ and $u_2$ restricted to the inclusions.}
\label{fig:figure2} 
\end{figure}

\begin{table}[!h]\
\begin{center}\footnotesize
\begin{tabular}{|c|c|c|c|c|c|c|c|c|c|c|c|c|c|c|c|} \hline\hline
$\eta$ & 3 &4 &5&6&7&8&9&10&$10^2$&$10^3$&$10^4$& $10^5$&$10^6$&$10^7$&$10^8$\\ \hline
\# & 25 &    16&    13&    11&    10&     9&     8&     8&     4&  
   3&     2&     2&     2&     1&     1 \\
\hline\hline
\end{tabular}
\end{center}
\caption{Number of terms needed to obtain a relative error 
of $10^{-8}$ for a given value of $\eta$.}\label{table:3.1}
\end{table}


\section{Applications to Multiscale Finite Elements: Approximation of $u_0$ with Localized Harmonic Characteristic}

In Section \ref{SectionNumerical} we showed examples of the expansions terms in two dimensions. In this section we also present some numerically examples, but in this case we consider that the domain $D$ is the union of a background and multiple inclusions homogeneously distributed. In particular, we compute a few terms using Finite Element Method as above. The main issue here is that, instead of using the harmonic characteristic functions defined in \eqref{chi multi}, we use a modification of them that allows computation in a local domain (instead of the whole background domain as in Section \ref{SectionNumerical}).\\

In particular, we consider one approximation of the terms of the expansion $u_0$ with localized harmonic characteristics. This is motivated due to the fact that, a main difficulty is that the computation of the harmonic characteristic functions is computationally expensive. One option is to approximate functions by solving a local problem (instead of a whole background problem). For instance, the domain where a harmonic characteristic functions can computed is illustrated in Figure \ref{fig:deltaneigh} where the approximated harmonic characteristic function will be zero on the a boundary of a neighborhood of the inclusions. This approximation will be consider for the case of many (highly dense) high-contrast inclusions. The analysis of the resulting methods is under study and results will be presented elsewhere.\\

\begin{figure}[!h]
\centering
\includegraphics[width=0.3\textwidth]{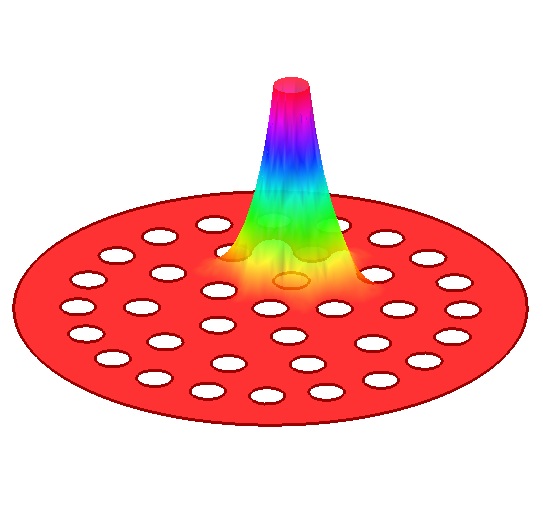}
\includegraphics[width=0.3\textwidth]{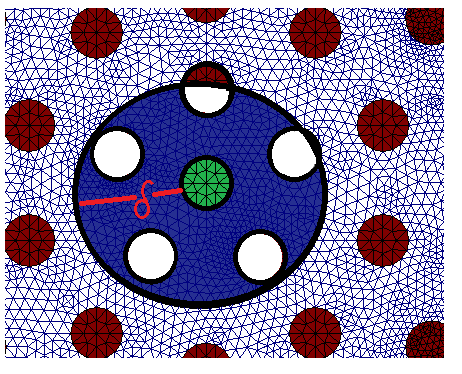}
\caption{Illustration of the Globally characteristic function  and $\delta-$ neighborhood of an inclusion (blue region).}
\label{fig:deltaneigh} 
\end{figure}

We know that the harmonic characteristic function are define by solving a problem in the background, which is a global problem. The idea is then to solve a similar problem only a neighborhood of each inclusion.\\

The exact characteristic functions are defined by the solution of 
\[
\left\{\begin{array}{ll}
\int_D\nabla \chi_m\cdot \nabla v=0, & \mbox{ for }v\in H_0^1(D_0)\\
\chi_m=0, & \mbox{ on }\partial D_\ell, \, m\neq \ell, \, \ell=1,2,\dots,M\\
\chi_m=1, & \mbox{ on }\partial D_m,
\end{array}\right.
\]
for $m\geq 1$. We define the neighborhood of the inclusion $D_m$ by
\[
D_{m,\delta}=\overline{D}_{m}\cup\left\{x\in D_0:d(x,D_m)<\delta\right\},\quad\mbox{(see Figure \ref{fig:deltaneigh})},
\]
and we define the characteristic function for the $\delta$-neighborhood
\[
\left\{\begin{array}{ll}
\int_D\nabla \chi_m^{\delta}\cdot \nabla v=0, & \mbox{ for }v\in H_0^1(D_{m,\delta})\\
\chi_m^{\delta}=0, & \mbox{ on }\partial D_{\ell} \cap \partial D_{m,\delta}\\
\chi_m^{\delta}=1, & \mbox{ on }\partial D_m.
\end{array}\right.
\]
We remain that the exact expression for $u_0$ is given by
\begin{equation}\label{u0multiscale}
u_0=u_{0,0}+\sum_{m=1}^{M}c_m\chi_{D_m},
\end{equation}
and the matrix problem for $u_0$ with globally supported basis is then,
\[
\mathbf{Ac}=\mathbf{b},
\]
with $\mathbf{A}=[a_{m\ell}]$, with $a_{m\ell}=\int_{D_m}\nabla \chi_m \cdot \nabla \chi_{\ell}$, $\mathbf{c}=[c_0,\dots,c_M]$ and $\mathbf{b}=[b_{\ell}]=\int_Df\chi_{D_{\ell}}$. Define $u_0^{\delta}$ using similar expansion given by
\begin{equation}\label{u0trnmultiscale}
u_0^{\delta}=u_{0,0}^{\delta}+\sum_{m=1}^{M}c_m^{\delta}\chi_{D_m}^{\delta},
\end{equation}
where $c_m^{\delta}$ is computed similarly to $c_{m}$ using an alternative matrix problem with basis $\chi_{D_m}^{\delta}$ instead of $\chi_{D_m}$. This system is given by
\[
\mathbf{A}^{\delta}\mathbf{c}^{\delta}=\mathbf{b}^{\delta}.
\]
with $\mathbf{A}^{\delta}=[a_{m\ell}^{\delta}]$, with $a_{m\ell}^{\delta}=\int_{D_m}\nabla \chi^{\delta}_m \cdot \nabla \chi^{\delta}_{\ell}$, $\mathbf{c}=[c^{\delta}_0,\dots,c^{\delta}_M]$ and $\mathbf{b}^{\delta}=[b_{\ell}^{\delta}]=\int_Df\chi_{D_{\ell}}^{\delta}$.\\
$u_{0,0}^{\delta}$ is an approximation to $u_{0,0}$ that solves the problem
\begin{align}
\int_{D^{\delta}}\nabla u_{0,0}^{\delta}\cdot \nabla v& = \int_{D^{\delta}}fv, &&\mbox{for all}v \in
H_0^1(D^{\delta})\\
u& =g, && \mbox{on }\partial D\\ 
u&=0, &&\mbox{on }\partial D^{\delta}\cap D,
\end{align}
where
\[
D^{\delta}=\left\{x\in D:d(x,\partial D)<\delta\right\}.
\]
We recall relative errors given by 
\begin{align}
e(u_0-u_0^{\delta})&=\frac{\|u_0-u_0^{\delta}\|_{H^1}}{\|u_0\|_{H^1}},&&\\
e(u_{0,0}-u_{0,0}^{\delta})&=\frac{\|u_{0,0}-u_{0,0}^{\delta}\|_{H^1}}{\|u_{0,0}\|_{H^1}}.&&
\end{align}
Which relate the exact solution \eqref{u0multiscale} and the alternative solution  \eqref{u0trnmultiscale}. We use the numerical implementation for this case and show two numerical examples.

\subsection{Example 1: 36 Inclusions}\label{Sec:3.3.1}

We study of the expansion with $\chi$ in problem \eqref{Example1Multiscale} was presented in the Section \ref{Sec:3.2.3}. The geometry of this problem is illustrated in the Figure \ref{fig:figure1}.\\

\begin{table}[!h]
\begin{center}\footnotesize
\begin{tabular}{|c|c|c|c|} \hline
$\delta$ & $e(u_0-u_{0}^{\delta})$ & 
$e(u_{0,0}-u_{0,0}^{\delta})$&$e(u_{c}-u_{c}^{\delta})$ \\
\hline\hline
 0.001000 & 0.830673 & 0.999907& 0.555113\\ 
  0.050000 & 0.530459 & 0.768135& 0.549068\\ 
  0.100000 & 0.336229 & 0.639191& 0.512751\\ 
  0.200000 & 0.081500 & 0.261912& 0.216649\\ 
  0.300000 & 0.044613 & 0.088706& 0.048173\\ 
  0.400000 & 0.041061 & 0.047743& 0.007886\\ 
  0.500000 & 0.033781 & 0.034508& 0.001225\\ 
  0.600000 & 0.029269 & 0.029362& 0.000174\\ 
  0.700000 & 0.020881 & 0.020888& 0.000021\\ 
  0.800000 & 0.012772 & 0.012773& 0.000003\\ 
  0.900000 & 0.006172 & 0.006172& 0.000000\\  \hline
\end{tabular}\\
\includegraphics[width=0.5\textwidth]{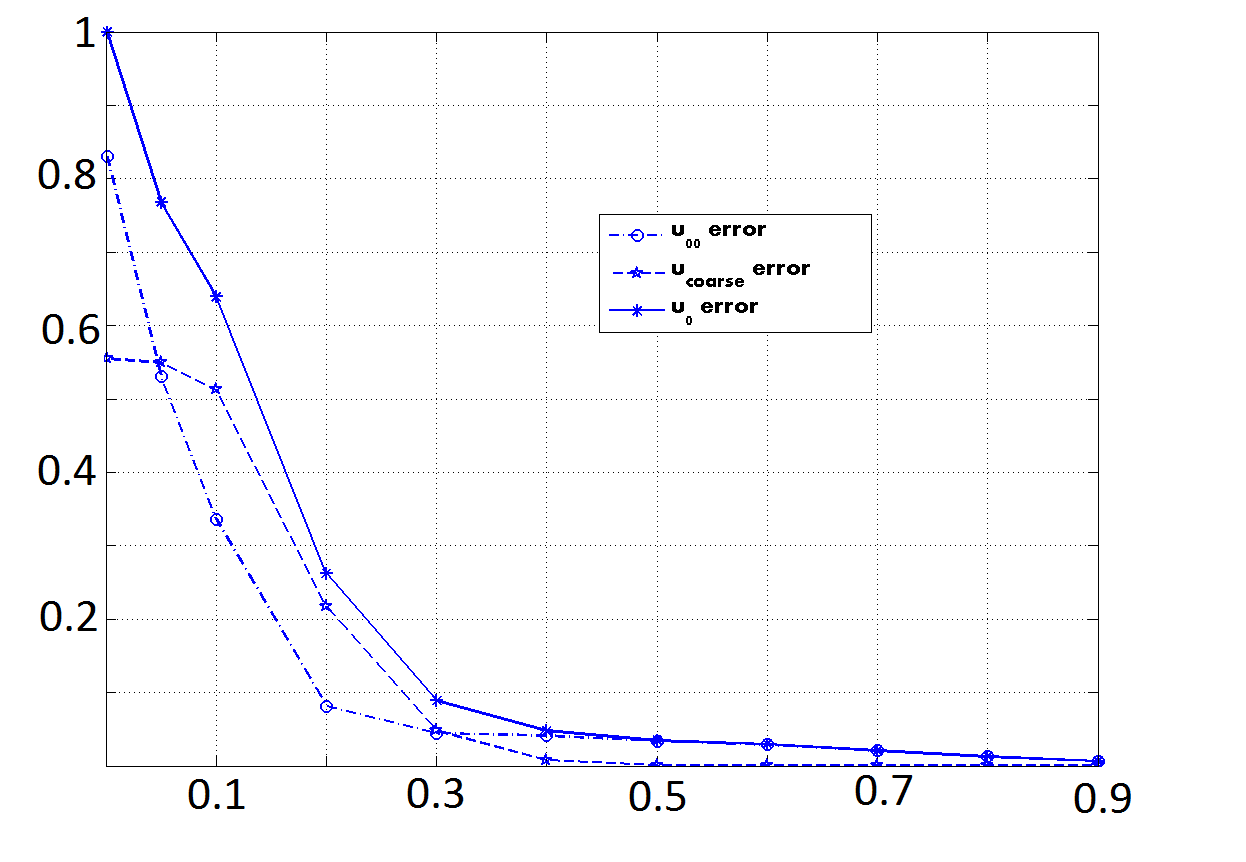}
\end{center}
\caption[Relative error in the approximation of $u_0$ by using locally computed 
basis functions and truncated boundary condition effect. 
$e(w)=\|w\|_{H^1}/\|u_0\|_{H^1}$.]{ Relative error in the approximation of $u_0$ by using locally computed 
basis functions and truncated boundary condition effect. 
$e(w)=\|w\|_{H^1}/\|u_0\|_{H^1}$. Here 
$u_0=u_{0,0}+u_c$ where $u_c$ is combination of 
harmonic characteristic functions and $u_{0}^{\delta}=u_{0,0}^{\delta}+u_{c}^{\delta}$
is computed by solving $u_{0,0}^{\delta}$ on a $\delta-$strip of the boundary 
$\partial D$ and the basis functions on a $\delta-$strip of the 
boundary of each inclusion.}\label{tab:deltaerror}
\end{table}

We have the numerical results for the different values of $\delta$ in the Table \ref{tab:deltaerror}. We solve for the harmonic characteristic function with zero Dirichlet boundary condition within a distance $\delta$ of the boundary of the inclusion.\\

In the Table \ref{tab:deltaerror} we observe that as the neighborhood approximates total domain, its error is relatively small. For instance, if we have a $\delta = 0.2$, the error that relates the exact solution $u_0$ and the truncated solution $u_0^{\delta}$ is around $8\%$. In analogous way an error presents around $6 \%$ between the function $u_{0,0}$ and $u_{0,0}^{\delta}$.\\

\subsection{Example 2: 60 Inclusions}

Similarly, we study of the expansion with $\chi$ in $D=(0,1)$  the circle with center $(0,0)$, radius $1$ and $60$ (identical) circular inclusions of radius $0.07$. We numerically solve the following problem 
\begin{equation}\label{Example2Multiscale}
\left\{\begin{array}{ll}
-\dive(\kappa(x)\nabla u_\eta(x))=1, & \mbox{ in }D\\
\hspace{0.1in}u(x)=x_1+x_2^2, & \mbox{ on }\partial D,
\end{array}\right.
\end{equation}

\begin{figure}[!h]
\centering
\includegraphics[width=0.35\textwidth]{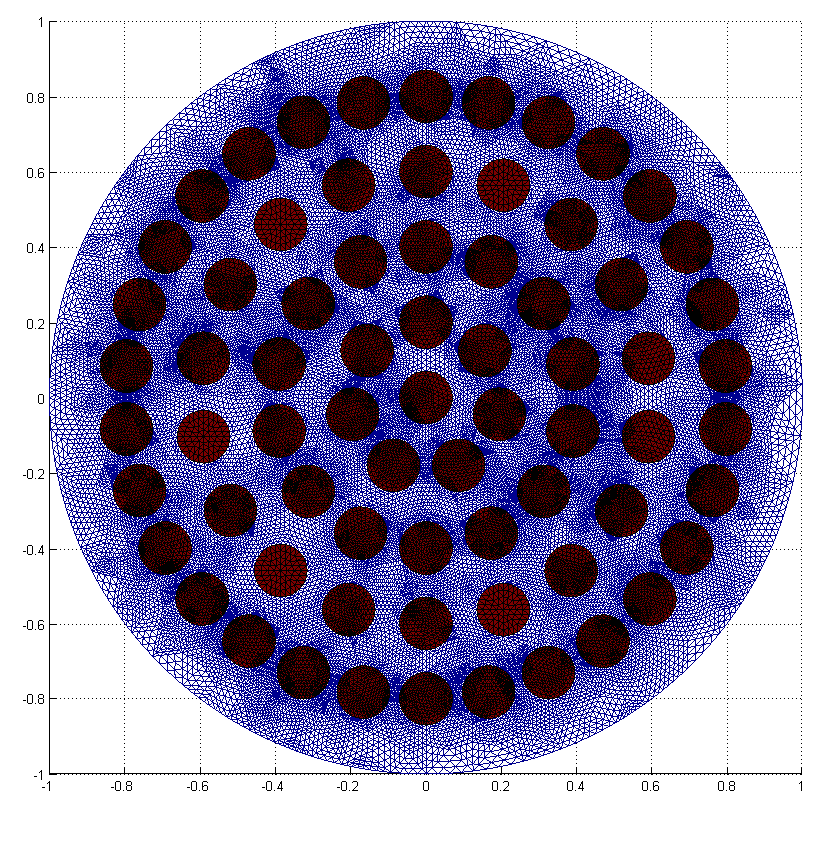}
\caption{Geometry for the problem \eqref{Example2Multiscale}.}
\label{fig:decay} 
\end{figure}

In the Figure \ref{fig:decay} we illustrate the geometry for the problem \eqref{Example2Multiscale}. We have the similar numeric results for the different values of $\delta$ in the Table \ref{tab:deltaerror2} for the problem \eqref{Example2Multiscale}. We solve for the harmonic characteristic function with zero Dirichlet boundary condition within a distance $\delta$ of the boundary of the inclusion.\\

For this example we have a difference in the numerically analysis to the problem \eqref{Example1Multiscale} in the Section \ref{Sec:3.3.1} it is due to numbers inclusions presents in the problem \eqref{Example2Multiscale}. In the Table \ref{tab:deltaerror2} we observe that as the neighborhood approximates total domain, its error is relatively small. For instance, if we have a $\delta = 0.20000$, the error that relates the exact solution $u_0$ and the truncated solution $u_0^{\delta}$ is around $1\%$. The relative error is $2\%$ between the function $u_{0,0}$ and $u_{0,0}^{\delta}$.\\

We state that if there are several high-conductivity inclusions in the geometry, which are closely distributed,  the localized harmonic characteristic function decay rapidly to zero for any $\delta$-neighborhood.\\


\textcolor{white}{We state that if there are several high-conductivity inclusions in the geometry, which are closely distributed,  the localized harmonic characteristic function decay rapidly to zero for any $\delta$-neighborhood.
We state that if there are several high-conductivity inclusions in the geometry, which are closely distributed,  the localized harmonic characteristic function decay rapidly to zero for any $\delta$-neighborhood.
}
\begin{table}
\begin{center}\footnotesize
\begin{tabular}{|c|c|c|c|} \hline\hline
$\delta$ & $e(u_0-u_{0}^{\delta})$ & 
$e(u_{00}-u_{0,0}^{\delta})$&$e(u_{c}-u_{c}^{\delta})$ \\
\hline\hline
0.001000 & 0.912746 & 0.999972& 0.408063\\ 
  0.050000 & 0.369838 & 0.549332& 0.399472\\ 
  0.100000 & 0.181871 & 0.351184& 0.258946\\ 
  0.200000 & 0.013781 & 0.020172& 0.011061\\ 
  0.300000 & 0.013332 & 0.013433& 0.000737\\ 
  0.400000 & 0.010394 & 0.010396& 0.000057\\ 
  0.500000 & 0.009228 & 0.009228& 0.000004\\ 
  0.600000 & 0.006102 & 0.006102& 0.000000\\ 
  0.700000 & 0.005561 & 0.005561& 0.000000\\ 
  0.800000 & 0.002239 & 0.002239& 0.000000\\ 
  0.900000 & 0.001724 & 0.001724& 0.000000\\ \hline
\hline
\end{tabular}\\
\includegraphics[width=0.5\textwidth]{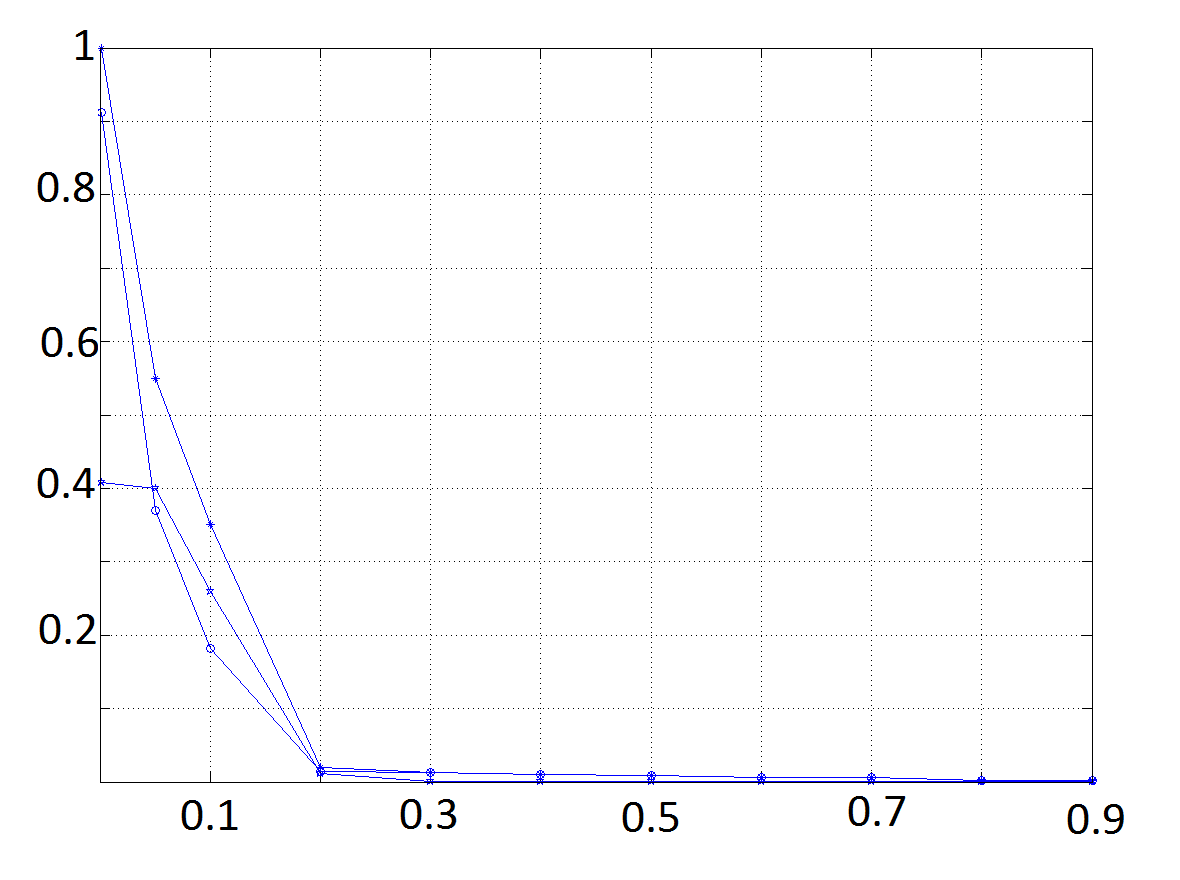}
\end{center}
\caption{ Relative error in the approximation of $u_0$ by using locally computed 
basis functions and truncated boundary condition effect. 
$e(w)=\|w\|_{H^1}/\|u_0\|_{H^1}$. Here 
$u_0=u_{0,0}+u_c$ where $u_c$ is combination of 
harmonic characteristic functions and $u_{0}^{\delta}=u_{0,0}^{\delta}+u_{c}^{\delta}$
is computed by solving $u_{0,0}^{\delta}$ on a $\delta-$strip of the boundary 
$\partial D$ and the basis functions on a $\delta-$strip of the 
boundary of each inclusion.}\label{tab:deltaerror2}
\end{table}



\chapter{Asymptotic Expansions for High-Contrast Linear Elasticity}\label{Chapter4}

\minitoc

In this chapter we recall one of the fundamental problems in solid mechanics, in which relates (locally) strains and deformations. This relationship is known as the constitutive law, that depending of the type of the material. We focus in the formulation of the constitutive models, which present a measure of the stiffness of an elastic isotropic material and is used to characterize of these material. Although, this formulation of models is complex when the deformations are very large.

\section{Problem Setting}\label{sec:problem setting}

In this section we consider the possible simplest case, the linear elasticity. This simple model is in the base of calculus of solid mechanics and structures. The fundamental problem is the formulation of the constitutive models express functionals that allow to compute the value of the stress $\tau$ at a point from of the value of the strain $\epsilon$ in that moment and in all previous.\\

Let $D\subset \R^d$ polygonal domain or a domain with smooth boundary. Given $u\in H^1(D)^d$ and a vector field $f$, we consider the linear elasticity Dirichlet problem
\begin{equation} \label{sformelast}
-\dive (\tau(u))=f \ \mbox{in }D, 
\end{equation} 
with $u=g$ on $\partial D$. Where $\tau(u)$ is stress tensor, defined in the Section \ref{LinearElasticity}.\\

We define the following weak formulation for the problem \eqref{sformelast}: find $u\in H_{0}^{1}(D)^{d}$ such that
\begin{equation}\label{eq:7}
\left\{\begin{array}{ll}
 {\cal A}(u,v)={\cal F}(v), & \mbox{for all } v\in H_0^1(D)^d,\\
\hspace{.37in} u=g, &\mbox{ on } \partial D,
\end{array}\right.
\end{equation}
where the bilinear form ${\cal A}$ and the linear functional $f$ are defined by
\begin{align}\label{eq:8}
{\cal A}(u,v)&=\int_{D}2\widetilde{\mu}E\epsilon(u)\cdot \epsilon(v)+\widetilde{\lambda}E\dive (u)\dive (v),  
 &&\mbox{for all }  u,v\in H_0^1(D)^d\\
{\cal F}(v)&=\int_Dfv, &&\mbox{for all } v\in H_0^1(D)^d. 
\end{align}
Here, the functions $\widetilde{\mu},E,\epsilon$ and $\widetilde{\lambda}$ are defined in the Section \ref{LinearElasticity}. As above, we assume that $D$ is the disjoint union of a background domain and inclusions, that is, $D=D_{0} \cup (\bigcup_{m=1}^{M}\overline{D}_m)$. We assume that $D_0, D_1, \dots ,D_M$ are polygonal domains or domains with smooth boundaries. We also assume that each $D_m$  is a connected domain, $m=0,1,\dots ,M$. Let $D_0$ represent the background domain and the sub-domains $\{D_m\}_{m=1}^M$ represent the inclusions. For simplicity of the presentation we consider only interior inclusions.
Given $w\in H^1(D)^d$ we will use the notation $w^{(m)}$, for the restriction of $w$ to the domain $D_m$, that is
\[
w^{(m)}=w|_{D_m}, \quad m=0,1,\dots ,M.
\]
For later use we introduce the following notation. Given $\Omega \subset D$ we denote by ${\cal A}_{\Omega}$ the bilinear form
\[
{\cal A}_\Omega (u,v)=\int_\Omega 2\widetilde{\mu}\epsilon(u)\cdot \epsilon(v)+\widetilde{\lambda}\dive (u)\dive (v), 
\]
defined for functions $u,v\in H^1(\Omega)^d$. Note that ${\cal A}_\Omega$ does not depend on the Young's modulus $E(x)$. We also denote by ${\cal RB}(\Omega)$ the subset of \emph{rigid body motions} defined on $\Omega$,
\begin{equation}\label{eq:10}
\left\{\begin{array}{ll}
 {\cal RB}(\Omega)=\{(a_1,a_2)+b(x_2,-x_1): \ a_1,a_2,b \in \R \} & \mbox{if } d=2\\
 \\
{\cal RB}(\Omega)=\{(a_1,a_2,a_3)+(b_1,b_2,b_3)\times (x_1,x_2,x_3): \ a_i,b_i \in \R , i=1,2,3. \} & \mbox{if } d=3.
\end{array}\right.
\end{equation}
Observe that for $d=2$, ${\cal RB}(\Omega)=\Span\left\{ (1,0),(0,1),(x_2,-x_1)\right\}$. For $d=3$ the dimension is $6$.

\section{Expansion for One Highly Inelastic Inclusions}

In this section we derive and analyze expansion for the case of highly inelastic inclusions. For the sake of readability and presentation, we consider first the case of only one highly inelastic inclusion in Section \ref{subsec:3.1}.

\subsection{Derivation for One High-Inelastic Inclusion}\label{subsec:3.1}
Let $E$ be defined by
\begin{equation} \label{eq:11}
E(x)=\left\{\begin{array}{ll}
\eta, & x \in D_1,\\
1, & x \in D_0=D\setminus \overline{D}_1,
\end{array}\right.
\end{equation}
and denote by $u_\eta$ the solution of the weak formulation \eqref{eq:7}. We assume that $D_1$ is compactly included in $D$ ($\overline{D}_1 \subset D$). Since $u_\eta$ is solution of \eqref{eq:7} with the coefficient \eqref{eq:11}, we have
\begin{equation}\label{eq:12}
{\cal A}_{D_0}(u_\eta,v)+\eta {\cal A}_{D_1}(u_\eta,v)={\cal F}(v),\quad  \mbox{for all } v \in H_0^1(D).
\end{equation}
We seek to determine $\{u_j\}_{j=0}^\infty \subset H^1(D)^d$ such that 
\begin{equation} \label{eq:13}
u_\eta =u_0+\frac{1}{\eta}u_1+\frac{1}{\eta ^2}u_2+\cdots =\sum_{j=0}^\infty \eta^{-j}u_j,
\end{equation}
and such that they satisfy the following Dirichlet boundary conditions
\begin{equation}\label{eq:14}
u_0=g \mbox{ on } \partial D \quad \mbox{and } \quad u_j=0 \mbox{ on } \partial D \mbox{ for } j\geq 1. 
\end{equation}
We substitute \eqref{eq:13} into \eqref{eq:12} to obtain that for all $v\in H_0^1(D)$ we have
\begin{equation} \label{eq:15new}
\eta{\cal A}_{D_1}(u_0,v)+\sum_{j=0}^\infty \eta^{-j}({\cal A}_{D_0}(u_j,v)+{\cal A}_{D_1}(u_{j+1},v))={\cal F}(v).
\end{equation}
Now we collect terms with equal powers of $\eta$ and analyze the resulting subdomains equations.

\subsubsection{Term corresponding to $\eta =\eta ^1$}
In \eqref{eq:15new} there is one term corresponding to $\eta$ to the power $1$, thus we obtain the following equation
\begin{equation} \label{eq:16}
{\cal A}_{D_1}(u_0,v)=0, \mbox{ for all }v\in H_0^1(D)^d.
\end{equation}
The problem above corresponds to an elasticity equation posed on $D_1$ with homogeneous Neumann boundary condition. Since we are assuming that $\overline{D}_1\subset D$, we conclude that $u_0^{(1)}$ is a \emph{rigid body motion}, that is, $u_0^{(1)}\in {\cal RB}(D_1)$ where ${\cal RB}$ is defined in \eqref{eq:10}. \\

In the general case, the meaning of this equation depends on the relative position of the inclusion $D_1$ with respect to the boundary. It may need to take the boundary data into account.

\subsubsection{Terms corresponding to $\eta^0=1$}
The equation \eqref{eq:14} contains three terms corresponding to $\eta$ to the power $0$, which are
\begin{equation} \label{eq:17new}
{\cal A}_{D_0}(u_0,v)+{\cal A}_{D_1}(u_1,v)={\cal F}(v), \mbox{ for all }\in H_0^1(D)^d.
\end{equation}
Let
\[
V_{\cal RB}=\{ v\in H_0^1(D)^d, \mbox{ such that } v^{(1)}=v|_{D_1} \in {\cal RB}^1(D_1)\}.
\]
If consider  $z\in V_{\cal RB}$ in equation \eqref{eq:17new} we conclude that $u_0$ satifies the following problem
\begin{align}\label{eq:18new}
{\cal A}_{D_0}(u_0,z)&=\int_{D}fz, &&\mbox{for all }  z\in V_{\cal RB}\\
u_0&=g &&\mbox{ on } \partial D. 
\end{align}
The problem \eqref{eq:18new} is elliptic and it has a unique solution, for details see \cite{MR1477663}. To analyze this problem further we proceed as follows. Let $\{ \xi_{D_1;\ell}\}_{\ell=1}^{L_d}$ be a basis for the ${\cal RB} (D_1)$ space. Note that $L_2=3$ and $L_3=6$ then $u_0^{(1)}=\sum_{\ell=1}^{L_d}c_{0;\ell}\xi_{D_1;\ell}$. We define the harmonic extension of the rigid body motions, $\chi_{D_1;1}\in H_0^1(D)^d$ such that
\[
\chi_{D_1;\ell}^{(1)}=\xi_{D_1;\ell},\quad \mbox{ in } D_1
\]
and as the harmonic extension of its boundary data in $D_0$ is given by
\begin{align}\label{eq:20}
{\cal A}_{D_0}(\chi _{D_1;\ell}^{(0)},z)&=0, &&\mbox{for all }  z\in H_0^1(D_0)^d \nonumber \\
\chi_{D_1;\ell}^{(0)}&=\xi_{D_1;\ell}, &&\mbox{ on } \partial D_1.\\
 \chi_{D_1;\ell}^{(0)}&=0, &&\mbox{ on } \partial D.\nonumber
\end{align}
To obtain an explicit formula for $u_0$ we will use the following facts: 
\begin{enumerate}
\item [(i)] problem \eqref{eq:18new} is elliptic and has a unique solution, and
\item [(ii)] a property of the harmonic characteristic functions described in the Remark below.
\end{enumerate}
\begin{remark} \label{remark1}
Let $w$ be a harmonic extension to $D_0$ of its Neumann data on $\partial D_0$. That is, $w$ satisfies the following problem
\[
{\cal A}_{D_0}(w,z)=\int_{\partial D_0}\widetilde{\tau}(w)\cdot n_0z, \quad \mbox{ for all }z\in H^1(D_0)^d.
\]
Since $\chi_{D_1;\ell}=0$ in $\partial D$  and $\chi_{D_1;\ell}=\xi_{D_1;\ell}$ on $\partial D_1$, we readily have that
\[
{\cal A}_{D_0}(\chi_{D_1;\ell},w)=0\left( \int_{\partial D}\widetilde{\tau}(w)\cdot n_0z \right)+1\left( \int_{\partial D_1}\widetilde{\tau}(w)n_0\chi_{D_1;\ell}\right),
\]
and we conclude that for every harmonic function on $D_0$
\begin{equation}\label{eq:21}
{\cal A}_{D_0}(w,\chi_{D_1;\ell})=\int_{\partial D_1}\widetilde{\tau}(w)\cdot n_0\chi_{D_1:\ell}.
\end{equation}
In particular, taking $w=\chi_{D_1}$ we have
\begin{equation}\label{eq:22}
{\cal A}_{D_0}(\chi_{D_1;\ell},\chi_{D_1;\ell})=\int_{\partial D_1} \widetilde{\tau}(\chi_{D_1;\ell})\cdot n_0\chi_{D_1;\ell}.
\end{equation}
\end{remark}
We can decompose $u_0$ into harmonic extension of its value in $D_1$, given by $ u_0^{(1)}=\sum_{\ell=1}^{L_d}c_{0;\ell}\xi_{D_1;\ell}$, plus the remainder $u_{0,0}\in H^1(D_0)$. Thus, we write
\begin{equation}\label{eq:23}
u_0=u_{0,0}+\sum_{\ell=1}^{L_d}c_{0;\ell}\xi_{D_1;\ell},
\end{equation}
where $u_{0,0}\in H^1(D)$ is defined by $u_{0,0}^{(1)}=0$ in $D_1$ and $u_{0,0}^{(0)}$ solves the following Dirichlet problem
\begin{align}\label{eq:24}
{\cal A}_{D_0}(u_{0,0}^{(0)},z)&=\int_{D_0}fz, && \mbox{ for all }z\in H_0^1(D_0)^d,  \\
u_{0,0}^{(0)}&=0, && \mbox{ on }\partial D_1, \nonumber\\
u_{0,0}^{(0)}&=g, && \mbox{ on }\partial D \nonumber.
\end{align}
From \eqref{eq:18new} and  \eqref{eq:23} we get that
\begin{equation} \label{eq:25}
\sum_{\ell=1}^{L_d}c_{0;\ell}{\cal A}_{D_0}(\chi_{D_1;\ell},\chi_{D_1;m})=\int_Df\chi_{D_1;m}-{\cal A}_{D_0}(u_{0,0},\chi_{D_1;m}),
\end{equation}
from which we can obtain the constants $c_{0;\ell}$, $\ell=1,\dots,L_d$ by solving a $L_d\times L_d$ linear system. As readily seen, the $L_d\times L_d$ matrix 
\begin{equation}\label{matrixa_lm}
\mathbf{A}_{\geom}=\left[ a_{\ell m}\right]_{\ell, m=1}^{L_d},\quad\mbox{where }a_{\ell m}={\cal A}_{D_0}(\chi_{D_1;\ell},\chi_{D_1;m}).
\end{equation}
The matrix $\mathbf{A}_{\geom}$ is positive and suitable constants exist.
Given the explicit form of $u_0$, we use it in \eqref{eq:17new} to find $u_1^{(1)}=u_1|_{D_1}$, from if conclude that $u_0^{(0)}$ and $u_1^{(1)}$ satisfy the local Dirichlet problems
\begin{align}
{\cal A}_{D_0}(u_0^{(0)},z)&=\int_{D_0}fz, && \mbox{ for all }z\in H_0^1(D_0)^d,\nonumber \\
{\cal A}_{D_1}(u_1^{(1)},z)&=\int_{D_1}fz, && \mbox{ for all }z\in H_0^1(D_1)^d,\nonumber
\end{align}
with given  boundary data $\partial D_0$ and $\partial D_1$. Equation \eqref{eq:17new} also represents the transmission conditions across $\partial D_1$ for the functions $u_0^{(0)}$ and $u_1^{(1)}$. This is easier to see when the forcing $f$ is square integrable. From now on, in order to simplify the presentation, we assume that $f\in L^2(D)$. If $f\in L^2(D)$ , we have that $u_0^{(0)}$ and $u_1^{(1)}$ are the only solutions of the problems
\[
{\cal A}_{D_0}(u_0^{(0)},z)=\int_{D_0}fz+\int_{\partial D_0\setminus \partial D}\widetilde{\tau}(u_0^{(0)})\cdot n_0z, \quad \mbox{ for all }z\in H^1(D_0)^d, 
\]
with $z=0$ on $\partial D$ and $u_0^{(0)}=g$ on $\partial D$, and
\[
{\cal A}_{D_1}(u_1^{(1)},z)=\int_{D_1}fz+\int_{\partial D_1}\widetilde{\tau}(u_1^{(1)})\cdot n_1z, \quad \mbox{ for all }z\in H^1(D_1)^d. 
\]
Replacing these last two equations back into \eqref{eq:17new} we conclude that
\begin{equation}\label{eq:26}
\widetilde{\tau}(u_1^{(1)})\cdot n_1=-\widetilde{\tau}(u_0^{(0)})\cdot n_0, \quad \mbox{ on }\partial D_1.
\end{equation}
Using this interface condition we can obtain $u_1^{(1)}$ in $D_1$ by writing
\begin{equation}\label{eq:27-1}
u_1^{(1)}=\widetilde{u}_1^{(1)}+\sum_{\ell=1}^{L_d}c_{1;\ell}\xi_{D_1;\ell},
\end{equation}
where $\widetilde{u}_1^{(1)}$ solves the Neumann problem
\begin{equation} \label{eq:27}
{\cal A}_{D_1}(\widetilde{u}_1^{(1)},z)=\int_{D_1}fz-\int_ {\partial D_1}\widetilde{\tau}(u_0^{(0)})\cdot n_1z, \quad \mbox{for all }z\in H^1(D_1)^d.
\end{equation}
Here the constants $c_{1;\ell}$ will be chosen later. Problem \eqref{eq:27} needs the following compatibility conditions
\[
\int_{D_1}f\xi+\int_{\partial D_1}\widetilde{\tau}(u_0^{(0)})\cdot n_1\xi =0, \quad \mbox{for all }\xi \in {\cal RB},
\]
which, using \eqref{eq:23} and \eqref{eq:26} and noting that $\chi_{D_1;\ell}$ in $D_1$, reduces to
\begin{equation} \label{eq:28}
\sum_{\ell =1}^{L_d}c_{0;\ell}\int_{\partial D_1}\widetilde{\tau}(\chi_{D_1;\ell})\cdot n_1\chi_{D_1;m}=\int_{D_1}f\chi_{D_1;m}-\int_{\partial D_1}\widetilde{\tau}(\widetilde{u}_{0,0})\cdot n_1\chi_{D_1;m},
\end{equation}
for $m=1,\dots ,L_d$. This system of $L_d$ equations is the same encountered before in \eqref{eq:25}. The fact that the two systems are the same follows from the next two integration by parts relations:
\begin{enumerate}
\item [(i)] according to Remark \ref{remark1}
\begin{equation} \label{eq:29}
\int_{\partial D_1}\widetilde{\tau}(\chi_{D_1;\ell})\cdot n_1\chi_{D_1;m}={\cal A}_{D_0}(\chi_{D_1;\ell},\chi_{D_1;m}).
\end{equation}
\item [(ii)] we have
\begin{equation} \label{eq:30}
\int_{\partial D_1}\widetilde{\tau}(\widetilde{u}_{0,0}\cdot n_1\chi_{D_1;m}={\cal A}_{D_0}(u_{0,0},\chi_{D_1;m})-\int_{D_0}f\chi_{D_1;m}.
\end{equation}
\end{enumerate}
By replacing the relations in (i) and (ii) into above \eqref{eq:28} we obtain \eqref{eq:25} and conclude that the compatibility condition of problem \eqref{eq:27} is satisfied.
Next, we discuss how to compute $u_1^{(0)}$ and $\widetilde{u}_1^{(0)}$ to completely define the functions $u_1\in H^1(D)^d$  and $\widetilde{u}_1\in H^1(D)^d$. These are presented for general $j\geq 1$ since the construction is independent of $j$ in this range.

\subsubsection{Term corresponding to $\eta^{-j}$ with $j\geq 1$}
For powers $1/\eta$ larger or equal to one there are only two terms in the summation that lead to the following system
\begin{equation}\label{eq:31}
{\cal A}_{D_0}(u_j,v)+{\cal A}_{D_1}(u_{j+1},v)=0, \quad \mbox{for all }v\in H_0^1(D)^d.
\end{equation}
This equation represents both the subdomain problems and the transmission conditions across $\partial D_1$ for $u_j^{(0)}$ and $u_{j+1}^{(1)}$. Following a similar argument to the one give above, we conclude that $u_j^{(0)}$ is harmonic in $D_0$ for all $j\geq 1$ and that $u_{j+1}^{(1)}$ is harmonic in $D_1$ for $j\geq 2$. As before, we have
\begin{equation}\label{eq:32}
\widetilde{\tau}(u_{j+1}^{(1)})\cdot n_1=-\tau (u_j^{(0)})\cdot n_0.
\end{equation}
we note that $u_j^{(1)}$ in $D_1$, (e.g., $u_1^{(1)}$ above) is given by the solution of a Neumann problem in $D_1$. Recall that the solution of a Neumann linear elasticity problem is defined up to a rigid body motion. To uniquely determine $u_j^{(1)}$, we write
\begin{equation} \label{eq:33}
u_j^{(1)}=\widetilde{u}_j^{(1)}+\sum_{\ell =1}^{L_d}c_{j;\ell}\xi_{D_1;\ell},
\end{equation}
where $u_j^{(1)}$ is $L^2$-orthogonal to the rigid body motion and the appropriate $c_{j;\ell}$ will be determinated later.
Given $u_j^{(1)}$ in $D_1$ we find $u_j^{(0)}$ in $D_0$ by solving a Dirichlet problem with known Dirichlet data, that is,
\begin{equation}\label{eq:34}
\begin{array}{l}
\mathcal{A}_{D_0}(u_{j}^{(0)}, z)=0,\quad \mbox{ for all } 
z\in H^1_0(D_0)^d \\
u_j^{(0)}=u_j^{(1)} \ \left(=\widetilde{u}_j^{(1)}+\displaystyle \sum_{\ell=1}^{L_d}
c_{j;\ell}\xi_{D_1;\ell}\right), \mbox{ on } 
\partial D_1
\quad \mbox{ and } \quad u_j=0 \mbox{ on } \partial D.
\end{array} 
\end{equation}
We conclude that
\begin{equation} \label{eq:35}
u_j=\widetilde{u_j}+\sum_{\ell =1}^{L_d}c_{j;\ell}\chi_{D_1;\ell},
\end{equation}
where $\widetilde{u}_{j}^{(0)}$ is defined by \eqref{eq:34} replacing $c_{j;\ell}$ by $0$. This completes the construction of $u_j$.
Now we proceed to show how to find $u_{j+1}^{(1)}$ in $D_1$. For this, we use \eqref{eq:30} and \eqref{eq:31} which lead to the following Neumann problem
\begin{equation} \label{eq:36}
{\cal A}_{D_1}(\widetilde{u}_{j+1}^{(1)},z)=-\int_{\partial D_1}\widetilde{\tau}(u_{j}^{(0)})\cdot n_0z, \quad \mbox{for all }z\in H^1(D_1)^d.
\end{equation}
The compatibility condition for this Neumann problem is satisfied if we choose $c_{j;\ell}$ the solution of the $L_d\times L_d$ system 
\begin{equation} \label{eq:37}
\sum_{\ell =1}^{L_d}c_{j;\ell}\int_{\partial D_1}\widetilde{\tau}(\chi_{D_1;\ell}\cdot n_1\chi_{D_1;m}=-\int_{\partial D_1}\widetilde{\tau}(\widetilde{u}_j)\cdot n_1\chi_{D_1;m},
\end{equation}
with $m=1,\dots ,L_d$. As pointed out before, see \eqref{eq:29}, this system can be written as
\[
\sum_{\ell =1}^{L_d}c_{j;\ell}{\cal A}_{D_0}(\chi_{D_1;\ell},\chi_{D_1;m})=-{\cal A}_{D_0}(\widetilde{u}_j,\chi_{D_1;m}).
\]
In this form we readily see that this $L_d\times L_d$ system matrix is positive definite and therefore solvable.
We can choose $u_{j+1}^{(1)}$ in $D_1$ such that
\[
u_{j+1}^{(1)}=\widetilde{u}_{j+1}^{(1)}+\sum_{\ell =1}^{L_d}c_{j+1;\ell}\xi_{D_1;\ell},
\]
where $\widetilde{u}_{j+1}^{(1)}$ is properly chosen and, as before
\[
\sum_{\ell =1}^{L_d}c_{j+1,\ell}\int_{\partial D_1}\widetilde{\tau}(\chi_{D_1;\ell})\cdot \chi_{D_1;m}=-\int_{\partial D_1}\widetilde{u}_{j+1}\cdot n_1\chi_ {D_1;m},
\]
$m=1,\dots , L_d$ and therefore we have the compatibility condition of the Neumann problem to compute $u_{j+2}^{(1)}$. See the equation \eqref{eq:36}.

\subsection{Convergence in $H^1(D)^d$}\label{convergenceelasti}

We study the convergence of the expansion \eqref{eq:13} with the Dirichlet data \eqref{eq:14}. For simplicity of the presentation we consider the case of one high-inelastic inclusions. We assume that $\partial D$ and $\partial D_1$ are sufficiently smooth. We follow the analysis in \cite{calo2014asymptotic}.\\

We use standard Sobolev spaces. Given a sub-domain $D$, we use the $H^1(D)^d$ norm given by
\[
\| v\|_{H^1(D)^d}^2=\|v\|_{L^2(D)^d}^2+\|\nabla v\|_{L^2(D)^d}^2, 
\]
and the seminorm
\[
|v|_{H^1(D)^d}^2=\|\nabla v\|_{L^2(D)^d}^2.
\]
We also use standard trace spaces $H^{1/2}(\partial D)^d$ and dual space $H^{-1}(D)^d$.

\begin{lemma}\label{mainlemma}
Let $\widetilde{w}\in H^1(D)$ be harmonic in $D_0$ and define 
\[
w=\widetilde{w}+\sum_{\ell =1}^{L_d}c_{0;\ell}\xi_{D_1;\ell},
\]
where $Y=(c_{0;1},\dots ,c_{0;L_d})$ is the solution of the system $L_d$-dimensional linear system
\begin{equation}\label{Ld system}
A_\geom Y=-W
\end{equation}
with $W=({\cal A}_{D_0}(\widetilde{w},\xi_{D_1;1}),\dots, {\cal A}_{D_0}(\widetilde{w},\xi_{D_1;L_d}))$. Then,
\[
\|w\|_{H^1(D)^d}\preceq\|\widetilde{w}\|_{H^1(D)^d},
\]
where the hidden constant is the Korn inequality constant of $D$.
\end{lemma}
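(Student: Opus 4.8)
The plan is to bound $w$ on the two subdomains separately, using an energy‑minimization (Galerkin orthogonality) argument on the background $D_0$ and then closing the estimate with Korn's inequality on the whole domain $D$. I first fix the meaning of the formula for $w$: the term $\sum_{\ell} c_{0;\ell}\xi_{D_1;\ell}$, which a priori lives only on $D_1$, is extended to $D_0$ by the harmonic extensions $\chi_{D_1;\ell}^{(0)}$ of \eqref{eq:20}, exactly as in \eqref{eq:35}. Consequently $w\in H^1(D)^d$, $w=\widetilde w$ on $\partial D$, the difference $w^{(1)}-\widetilde w^{(1)}$ is a rigid body motion on $D_1$, and $w^{(0)}$ is harmonic in $D_0$ (a sum of the harmonic functions $\widetilde w^{(0)}$ and $\sum_{\ell}c_{0;\ell}\chi_{D_1;\ell}^{(0)}$).

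The key step is an orthogonality identity. Put ${\cal H}=H_0^1(D_0)^d+\Span\{\chi_{D_1;\ell}^{(0)}\}_{\ell=1}^{L_d}$; I would show ${\cal A}_{D_0}(w^{(0)},z)=0$ for all $z\in{\cal H}$. For $z\in H_0^1(D_0)^d$ this is harmonicity of $w^{(0)}$; for $z=\chi_{D_1;m}^{(0)}$ it follows from expanding $w^{(0)}$ and invoking the definition of $Y$, since ${\cal A}_{D_0}(w^{(0)},\chi_{D_1;m}^{(0)})={\cal A}_{D_0}(\widetilde w^{(0)},\chi_{D_1;m}^{(0)})+\sum_{\ell}c_{0;\ell}\,a_{\ell m}=W_m+(A_\geom Y)_m=W_m-W_m=0$, using $a_{\ell m}={\cal A}_{D_0}(\chi_{D_1;\ell},\chi_{D_1;m})$ from \eqref{matrixa_lm}. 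Because $w^{(0)}-\widetilde w^{(0)}\in{\cal H}$, the parallelogram identity ${\cal A}_{D_0}(w^{(0)}+z,w^{(0)}+z)={\cal A}_{D_0}(w^{(0)},w^{(0)})+{\cal A}_{D_0}(z,z)$ for $z\in{\cal H}$, together with positive semidefiniteness of ${\cal A}_{D_0}$, gives the energy bound ${\cal A}_{D_0}(w^{(0)},w^{(0)})\le{\cal A}_{D_0}(\widetilde w^{(0)},\widetilde w^{(0)})$ (take $z=\widetilde w^{(0)}-w^{(0)}$).

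To finish, I would pass from energy to the $H^1$ norm. Rigid body motions have zero strain, so $\epsilon(w)=\epsilon(\widetilde w)$ on $D_1$, while on $D_0$ the coercivity of ${\cal A}_{D_0}$ in the strain and its boundedness give $\|\epsilon(w^{(0)})\|_{L^2(D_0)}^2\lesssim {\cal A}_{D_0}(w^{(0)},w^{(0)})\le {\cal A}_{D_0}(\widetilde w^{(0)},\widetilde w^{(0)})\lesssim |\widetilde w^{(0)}|_{H^1(D_0)^d}^2$. Adding the two subdomains, $\|\epsilon(w)\|_{L^2(D)^d}^2\lesssim |\widetilde w|_{H^1(D)^d}^2\le\|\widetilde w\|_{H^1(D)^d}^2$. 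Then Korn's inequality on $D$ in the form $\|w\|_{H^1(D)^d}\le C_K\big(\|\epsilon(w)\|_{L^2(D)^d}+\|w\|_{H^{1/2}(\partial D)^d}\big)$ — legitimate because the only rigid body motion of $D$ vanishing on $\partial D$ is $0$ — together with $w=\widetilde w$ on $\partial D$ and the trace estimate $\|\widetilde w\|_{H^{1/2}(\partial D)^d}\lesssim\|\widetilde w\|_{H^1(D)^d}$, yields $\|w\|_{H^1(D)^d}\preceq\|\widetilde w\|_{H^1(D)^d}$, with the hidden constant controlled by $C_K$ (the remaining factors depending only on the bounds for $\widetilde\mu,\widetilde\lambda$ and on a trace constant). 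A shorter alternative — bound $|Y|$ directly via $Y=-A_\geom^{-1}W$ and $|W|\lesssim\|\widetilde w\|_{H^1(D)^d}$, then use the triangle inequality — also works, but hides the dependence as $\|A_\geom^{-1}\|$ rather than as the Korn constant of $D$.

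The main obstacle I anticipate is at the last step: one must use the Korn inequality with an $H^{1/2}(\partial D)$ boundary term (not the second Korn inequality with an $L^2(D)$ term), so that the right‑hand side is genuinely $O(\|\widetilde w\|_{H^1(D)^d})$ and the constant is exactly, up to benign factors, the Korn constant of $D$. A lesser point to keep straight is that ${\cal A}_{D_0}$ is only positive semidefinite on $H^1(D_0)^d$, its kernel being the global rigid body motions of $D_0$; this suffices for the parallelogram identity but one should avoid claiming coercivity of ${\cal A}_{D_0}$ on the full space.
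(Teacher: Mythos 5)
Your proof is correct, and its engine is the same as the paper's: the normal equations $A_{\geom}Y=-W$ give ${\cal A}_{D_0}$-orthogonality of the correction $\sum_\ell c_{0;\ell}\chi_{D_1;\ell}$ against $\widetilde w$, i.e.\ the correction is the Galerkin projection of $-\widetilde w$ onto $\Span\{\chi_{D_1;\ell}\}$. Where you diverge is in how the $H^1$ bound is extracted. The paper uses the identity $Y^TA_{\geom}Y=-Y^TW$ together with Cauchy--Schwarz and Korn on $D_0$ (legitimate since the $\chi_{D_1;\ell}$ vanish on $\partial D\subset\partial D_0$) to get $\bigl|\sum_\ell c_{0;\ell}\chi_{D_1;\ell}\bigr|_{H^1(D_0)^d}\preceq|\widetilde w|_{H^1(D_0)^d}$, and then concludes by the triangle inequality; you instead push the orthogonality to a best-approximation statement ${\cal A}_{D_0}(w^{(0)},w^{(0)})\le{\cal A}_{D_0}(\widetilde w^{(0)},\widetilde w^{(0)})$ and recover $\|w\|_{H^1(D)^d}$ via a Korn inequality on all of $D$ with an $H^{1/2}(\partial D)$ boundary term. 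Both are sound; the paper's version is shorter, while yours more literally substantiates the claim that the hidden constant is ``the Korn constant of $D$'' and correctly makes explicit (which the paper's statement glosses over) that the $\xi_{D_1;\ell}$ in the formula for $w$ must be read as their harmonic extensions $\chi_{D_1;\ell}$ on $D_0$, since otherwise ${\cal A}_{D_0}(\widetilde w,\xi_{D_1;\ell})$ and the sum defining $w$ on $D_0$ are not meaningful. Your caution about using only positive semidefiniteness of ${\cal A}_{D_0}$ on $H^1(D_0)^d$ is also well placed.
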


\begin{proof}
We remember that
\[
\mathbf{A}_{\geom}=\left[a_{\ell m}\right]_{\ell ,m=1}^{L_d},\quad\mbox{with }a_{\ell ,m}={\cal A}_{D_0}(\xi_{D_1;\ell},\xi_{D_1;m}).
\]
Note that $\sum_{\ell =1}^{L_d}c_{0;\ell}\xi_{D_{1};\ell}$ is the Galerkin projection of $\widetilde{w}$ into the space $\Span \{ \xi_{D_1;\ell}\}_{\ell =1}^{L_d}$. Then, as usual in finite element analysis of Galerkin formulations, we have
\begin{eqnarray*}
{\cal A}_{D_0}\left(\sum_{\ell=1}^{L_d}c_{0;\ell}\xi_{D_1;\ell},\sum_{\ell=1}^{L_d}c_{0;\ell}\xi_{D_1;\ell}\right) & = & Y^{T}A_{\geom}Y=-Y^{T}W\\
& = & -\sum_{\ell=1}^{L_d}c_{0;\ell}{\cal A}_{D_0}(\widetilde{w},\xi_{D_1;\ell})\\
& = & -{\cal A}_{D_0}\left(\widetilde{w},\sum_{\ell=1}^{L_d}c_{0;\ell}\xi_{D_1;\ell}\right)\\
& \leq & |\widetilde{w}|_{H^1(D)^d}\left|\sum_{\ell=1}^{L_d}c_{0;\ell}\xi_{D_1;\ell}\right|_{H^1(D_0)^d},
\end{eqnarray*}
by the Korn inequality
\begin{eqnarray*}
\left| \sum_{\ell =1}^{L_d}c_{0;\ell}\xi_{D_1;\ell} \right|_{H^1(D_0)^d}^2 & \leq  & C{\cal A}_{D_0}\left(\sum_{\ell=1}^{L_d}c_{0;\ell}\xi_{D_1;\ell},\sum_{\ell=1}^{L_d}c_{0;\ell}\xi_{D_1;\ell}\right)\\
& \leq & |\widetilde{w}|_{H^1(D_0)^d}\left|\sum_{\ell =1}^{L_d}C_{0;\ell}\xi_{D_1;\ell}\right|_{H^1(D_0)^d},
\end{eqnarray*}
so
\[
\left|\sum_{\ell=1}^{L_d}c_{0;\ell}\xi_{D_1;\ell}\right|_{H^1(D_0)^d}\leq |\widetilde{w}|_{H^1(D_0)}.
\]
Using the fact above, we get
\[
\|w\|_{H^1(D)^d}\leq \| \widetilde{w}\|_{H^1(D)^d}+\left\Vert \sum_{\ell=1}^{L_d}c_{0;\ell}\xi_{D_1;\ell}\right\Vert _{H^1(D)^d}\preceq \| \widetilde{w}\|_{H^1(D)^d}.
\]
\end{proof}

For the prove of the convergence of the expansion \eqref{eq:13} with the boundary condition \eqref{eq:14}, we consider the following additional results obtained by applying the Lax-Milgram theorem in Section \ref{SectionB.3} and the trace theorem in Section \ref{Sec:B.5.2}.   


\begin{lemma}\label{lemmaconver1}
Let $u_0$ in \eqref{eq:23}, with $u_{0,0}$ defined in \eqref{eq:24}, and $u_1$ be defined by \eqref{eq:27} and \eqref{eq:34} with $j=1$. We have that
\begin{equation} \label{eq:4.35}
\|u_0\|_{H^1(D)^d}\preceq  \|f\|_{H^{-1}(D)^d}+\|g\|_{H^{1/2}(\partial D)^d},
\end{equation}
\begin{equation}\label{eq:4.36}
\|\widetilde{u}_1\|_{H^1(D_1)^d} \preceq  \|f\|_{H^{-1}(D_1)^d}+\|g\|_{H^{1/2}(\partial D)^d}
\end{equation}
and
\begin{equation}\label{eq:4.37}
\|\widetilde{u}_1\|_{H^1(D_0)^d}\preceq  \|\widetilde{u}_1\|_{H^{1/2}(\partial D_1)^d} \preceq \|\widetilde{u}_1\|_{H^{1}( D_1)^d}.
\end{equation}
\end{lemma}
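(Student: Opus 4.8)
### Proof Proposal for Lemma \ref{lemmaconver1}

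The plan is to prove the three estimates in sequence, since each relies on the constructions and auxiliary results recalled earlier in the chapter. For \eqref{eq:4.35}, recall that $u_0 = u_{0,0} + \sum_{\ell=1}^{L_d} c_{0;\ell}\xi_{D_1;\ell}$ with $u_{0,0}^{(0)}$ solving the Dirichlet problem \eqref{eq:24} and $u_{0,0}^{(1)} = 0$. I would first apply Lax-Milgram (Section \ref{SectionB.3}) together with standard lifting of the boundary data $g$ via the trace theorem (Section \ref{Sec:B.5.2}) to obtain $\|u_{0,0}\|_{H^1(D)^d} \preceq \|f\|_{H^{-1}(D)^d} + \|g\|_{H^{1/2}(\partial D)^d}$; this uses the coercivity of ${\cal A}_{D_0}$ on $H_0^1(D_0)^d$, which holds by Korn's inequality. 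Then I would invoke Lemma \ref{mainlemma} with $\widetilde w = u_{0,0}$: since $u_{0,0}^{(0)}$ is not literally harmonic in $D_0$ (it solves a problem with right-hand side $f$), I would instead decompose $u_{0,0}^{(0)} = h + p$ where $h$ is the harmonic extension of the boundary data of $u_{0,0}^{(0)}$ and $p \in H_0^1(D_0)^d$ solves ${\cal A}_{D_0}(p,z) = \int_{D_0} fz$, both controlled by the same right-hand side; applying Lemma \ref{mainlemma}'s argument to the harmonic part and treating the $H_0^1$ part directly, the full bound \eqref{eq:4.35} follows. (Alternatively one can observe directly that $u_0$ satisfies the elliptic problem \eqref{eq:18new}, which is coercive, and extract the bound in one step — this is cleaner and I would likely present it this way.)

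For \eqref{eq:4.36}, recall that $\widetilde u_1^{(1)}$ solves the Neumann problem \eqref{eq:27},
\[
{\cal A}_{D_1}(\widetilde u_1^{(1)}, z) = \int_{D_1} fz - \int_{\partial D_1} \widetilde\tau(u_0^{(0)})\cdot n_1 z, \quad z \in H^1(D_1)^d,
\]
with the normalization that $\widetilde u_1^{(1)}$ is $L^2$-orthogonal to ${\cal RB}(D_1)$. On the quotient space $H^1(D_1)^d / {\cal RB}(D_1)$ the bilinear form ${\cal A}_{D_1}$ is coercive (Korn's second inequality), so Lax-Milgram gives $\|\widetilde u_1^{(1)}\|_{H^1(D_1)^d} \preceq \|f\|_{H^{-1}(D_1)^d} + \|\widetilde\tau(u_0^{(0)})\cdot n_1\|_{H^{-1/2}(\partial D_1)^d}$. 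The normal stress on $\partial D_1$ is then controlled by the energy of $u_0^{(0)}$ in $D_0$ via the normal-trace (Green's formula) estimate $\|\widetilde\tau(u_0^{(0)})\cdot n_1\|_{H^{-1/2}(\partial D_1)^d} \preceq \|u_0^{(0)}\|_{H^1(D_0)^d} + \|f\|_{L^2(D_0)^d}$, which together with \eqref{eq:4.35} yields \eqref{eq:4.36}.

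Finally, \eqref{eq:4.37} is the most routine: $\widetilde u_1^{(0)}$ is defined in \eqref{eq:34} (with $c_{j;\ell}$ set to $0$) as the harmonic extension into $D_0$ of the Dirichlet data $\widetilde u_1^{(1)}|_{\partial D_1}$ and $0$ on $\partial D$. The elliptic energy estimate for this harmonic extension gives $\|\widetilde u_1\|_{H^1(D_0)^d} \preceq \|\widetilde u_1\|_{H^{1/2}(\partial D_1)^d}$, and the trace theorem gives $\|\widetilde u_1\|_{H^{1/2}(\partial D_1)^d} \preceq \|\widetilde u_1\|_{H^1(D_1)^d}$, which is the chain in \eqref{eq:4.37}. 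The main obstacle, and the step deserving the most care, is the passage from the abstract Neumann solvability to a \emph{quantitative} bound for $\widetilde u_1^{(1)}$: one must verify that the compatibility condition (already checked in Section \ref{subsec:3.1} via \eqref{eq:29}–\eqref{eq:30}) lets us work on the rigid-body quotient, and one must correctly estimate the boundary functional $z \mapsto \int_{\partial D_1}\widetilde\tau(u_0^{(0)})\cdot n_1 z$ in $H^{-1/2}(\partial D_1)^d$ using that $u_0^{(0)}$ is itself the solution of an elliptic problem with data $f$ and $g$ — everything else is a bookkeeping assembly of Lax-Milgram, Korn, and trace estimates with the smoothness assumptions on $\partial D$ and $\partial D_1$ doing the required work.
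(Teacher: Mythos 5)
Your proposal is correct and follows essentially the same route as the paper: for \eqref{eq:4.35} the paper likewise bounds $u_{0,0}$ by the standard elliptic estimate and then controls $\sum_{\ell}c_{0;\ell}\xi_{D_1;\ell}$ by redoing the Galerkin-projection argument of Lemma \ref{mainlemma} with the extra $\int_D f(\cdot)$ term in the right-hand side of \eqref{eq:25} — precisely the non-harmonicity issue you flag — obtaining $\bigl|\sum_{\ell}c_{0;\ell}\xi_{D_1;\ell}\bigr|_{H^1(D_0)^d}\preceq |u_{0,0}|_{H^1(D_0)^d}+\|f\|_{H^{-1}(D)^d}$. For \eqref{eq:4.36} and \eqref{eq:4.37} the paper only says ``Korn inequality for Neumann (resp.\ Dirichlet) conditions and the Trace theorem,'' so your Lax--Milgram-on-the-rigid-body-quotient plus normal-trace estimate is simply a fleshed-out version of the same argument.
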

\begin{proof}
From the definition of $u_{0,0}$ in (\ref{eq:24}) we have that
\[
\|u_{0,0}\|_{H^1(D_0)^d}\preceq\|f\|_{H^{-1}(D_0)^d}+\|g\|_{H^{1/2}(\partial D)^d},
\]
for more details see for instance, \cite{Adams-book2003}.

Now, using Korn inequality for Dirichlet data and Lax-Milgram theorem we have
\[
\|u_0\|_{H^1(D)^d}\preceq |u_0|_{H^1(D)^d}=|u_0|_{H^1(D_0)^d}\leq |u_{0,0}|_{H^1(D_0)^d}+\left|\sum_{\ell=1}^{L_d}c_{0;\ell}\xi_{D_1;\ell}\right|_{H^1(D_0)^d},
\]
and using a similar fact in the Lemma \ref{mainlemma}, we have that
\begin{eqnarray*}
\left|\sum_{\ell=1}^{L_d}c_{0;\ell}\xi_{D_1;\ell}\right|_{H^1(D_0)^d}^2 & \leq & C\left|u_{0,0}\right|_{H^1(D_0)^d}\left|\sum_{\ell=1}^{L_d}c_{0;\ell}\xi_{D_1;\ell}\right|_{H^1(D_0)^d}+\int_{D} f\left(\sum_{\ell=1}^{L_d}c_{0;\ell}\xi_{D_1;\ell}\right)\\
& \preceq & \left|u_{0,0}\right|_{H^1(D_0)^d}\left|\sum_{\ell=1}^{L_d}c_{0;\ell}\xi_{D_1;\ell}\right|_{H^1(D_0)^d}+\|f\|_{H^{-1}(D)^d}\left|\sum_{\ell=1}^{L_d}c_{0;\ell}\xi_{D_1;\ell}\right|_{H^1(D_0)^d},
\end{eqnarray*}
so
\[
\left|\sum_{\ell=1}^{L_d}c_{0;\ell}\xi_{D_1;\ell}\right|_{H^1(D_0)^d}\preceq \left|u_{0,0}\right|_{H^1(D_0)^d}+\|f\|_{H^{-1}(D)^d}.
\]
Using this fact and the definition of $u_{0,0}$, we conclude that
\[
\|u_0\|_{H^1(D)^d}\preceq\|f\|_{H^{-1}(D)^d}+\|g\|_{H^{1/2}(\partial D)^d}.
\]
This concludes the proof.\\

The equation (\ref{eq:4.36}) uses the similar argument well as in the above proof, we use the Korn inequality for Neumann conditions and the Trace theorem. Finally, the equation (\ref{eq:4.37}) is given using Korn inequality for Dirichlet conditions and the Trace Theorem.
\end{proof}
\begin{lemma}\label{lemmaconver2}
Let $u_j$ defined on $D_0$ by \eqref{eq:34} with $c_{j;\ell}$ and $u_{j+1}$ defined on $D_1$ by \eqref{eq:36}. For $j\geq 1$ we have that
\[
\|u_{j+1}\|_{H^1(D)^d}\preceq \|u_j\|_{H^1(D_0)^d}.
\] 
\end{lemma}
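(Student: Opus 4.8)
The plan is to follow the construction of $u_{j+1}$ stage by stage --- the Neumann solve in $D_1$, the harmonic extension into $D_0$, and the Galerkin correction by rigid body motions --- and to bound each stage by its natural stability estimate, exactly as was done for $u_1$ in Lemma \ref{lemmaconver1}.

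First I would decompose $u_{j+1}^{(1)}=\widetilde u_{j+1}^{(1)}+\sum_{\ell=1}^{L_d}c_{j+1;\ell}\xi_{D_1;\ell}$, with $\widetilde u_{j+1}^{(1)}$ the solution of the Neumann problem \eqref{eq:36} that is $L^2$-orthogonal to ${\cal RB}(D_1)$. The data of \eqref{eq:36} is the conormal field $-\widetilde\tau(u_j^{(0)})\cdot n_0$ on $\partial D_1$, and since $u_j^{(0)}$ is ${\cal A}_{D_0}$-harmonic by \eqref{eq:34}, this field lies in $H^{-1/2}(\partial D_1)^d$ with $\|\widetilde\tau(u_j^{(0)})\cdot n_0\|_{H^{-1/2}(\partial D_1)^d}\preceq \|u_j^{(0)}\|_{H^1(D_0)^d}$. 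The compatibility condition for \eqref{eq:36} is precisely \eqref{eq:37}, which holds by the choice of the $c_{j;\ell}$, so Korn's inequality for the pure Neumann problem together with the Lax-Milgram theorem (Section \ref{SectionB.3}) give $\|\widetilde u_{j+1}^{(1)}\|_{H^1(D_1)^d}\preceq \|u_j^{(0)}\|_{H^1(D_0)^d}\le \|u_j\|_{H^1(D_0)^d}$, in analogy with \eqref{eq:4.36}. Next, $\widetilde u_{j+1}^{(0)}$ is the ${\cal A}_{D_0}$-harmonic extension of $\widetilde u_{j+1}^{(1)}|_{\partial D_1}$ vanishing on $\partial D$; as in \eqref{eq:4.37}, the trace theorem (Section \ref{Sec:B.5.2}) and stability of the harmonic extension yield $\|\widetilde u_{j+1}^{(0)}\|_{H^1(D_0)^d}\preceq \|\widetilde u_{j+1}^{(1)}\|_{H^{1/2}(\partial D_1)^d}\preceq \|\widetilde u_{j+1}^{(1)}\|_{H^1(D_1)^d}$, so altogether $\|\widetilde u_{j+1}\|_{H^1(D)^d}\preceq \|u_j\|_{H^1(D_0)^d}$.

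Finally, $\widetilde u_{j+1}\in H^1(D)^d$ is harmonic in $D_0$ and $u_{j+1}=\widetilde u_{j+1}+\sum_{\ell=1}^{L_d}c_{j+1;\ell}\chi_{D_1;\ell}$, where the coefficient vector $(c_{j+1;1},\dots,c_{j+1;L_d})$ solves the $L_d\times L_d$ system \eqref{eq:37} with $j$ replaced by $j+1$, i.e. $\mathbf{A}_{\geom}Y=-W$ with $W_m={\cal A}_{D_0}(\widetilde u_{j+1},\chi_{D_1;m})$; this is precisely the system appearing in Lemma \ref{mainlemma}. Hence Lemma \ref{mainlemma} applies with $\widetilde w=\widetilde u_{j+1}$ and gives $\|u_{j+1}\|_{H^1(D)^d}\preceq \|\widetilde u_{j+1}\|_{H^1(D)^d}\preceq \|u_j\|_{H^1(D_0)^d}$, with hidden constants depending only on $D$, $D_1$, the Korn and trace constants, and the material parameters, not on $j$ or $\eta$.

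The step I expect to be the main obstacle is the bound $\|\widetilde\tau(u_j^{(0)})\cdot n_0\|_{H^{-1/2}(\partial D_1)^d}\preceq \|u_j^{(0)}\|_{H^1(D_0)^d}$ for the conormal stress of the harmonic displacement $u_j^{(0)}$. The key point is that $\dive\widetilde\tau(u_j^{(0)})=0$ in $D_0$, so the pairing $\langle\widetilde\tau(u_j^{(0)})\cdot n_0,\phi\rangle={\cal A}_{D_0}(u_j^{(0)},\Phi)$ is independent of which $H^1(D_0)^d$-extension $\Phi$ of $\phi$ vanishing on $\partial D$ is chosen; picking a bounded extension and estimating by $\|u_j^{(0)}\|_{H^1(D_0)^d}\,\|\phi\|_{H^{1/2}(\partial D_1)^d}$ then finishes it. Everything else is a routine assembly of Korn's inequality, Lax-Milgram, the trace theorem, and Lemma \ref{mainlemma}.
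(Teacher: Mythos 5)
Your proof is correct and follows essentially the same route as the paper: bound the Neumann solve in $D_1$ by the conormal data coming from the ${\cal A}$-harmonic $u_j^{(0)}$, control the harmonic extension to $D_0$ via the trace theorem, and absorb the rigid-body correction with Lemma \ref{mainlemma}. In fact you supply the key detail the paper's terse proof glosses over — the $H^{-1/2}(\partial D_1)^d$ bound on $\widetilde\tau(u_j^{(0)})\cdot n_0$ via the extension/duality argument — where the paper merely cites its Dirichlet-problem stability theorem.
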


\begin{proof}
Let $j\geq 1$. Consider $\widetilde{u}_{j+1}$ defined by the Dirichlet in \eqref{eq:34}. From the Lemma \ref{mainlemma} and combining \eqref{dirichletprob} with the trace theorem \ref{TheoremA.2}, we have
\[
\|u_{j+1}\|_{H^1(D)^d}\preceq \|\widetilde{u}_{j+1}\|_{H^1(D)^d}\leq C\|\widetilde{u}_{j+1}\|_{H^1(D_1)^d},
\]
applying \eqref{dirichletprob} in the last equation we obtain
\[
\|\widetilde{u}_{j+1}\|_{H^1(D_1)^d}\preceq \|u_j\|_{H^1(D_0)^d}.
\]
Combining these inequalities we have
\[
\|u_{j+1}\|_{H^1(D)^d}\preceq \|u_j\|_{H^1(D_0)^d}.
\]
This concludes the proof.
\end{proof}

\begin{theorem}
There is a constant $C>0$ such that for every $\eta>C$, the expansion \eqref{eq:13} converges (absolutely) in $H^1(D)$. The asymptotic limit $u_0$ satisfies the problem \eqref{eq:18} and $u_0$ can be computed using formula \eqref{eq:23}.
\end{theorem}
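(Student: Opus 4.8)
The plan is to turn the three lemmas above into a geometric-series bound for the terms $u_j$ and then to identify the sum of the expansion with the genuine solution $u_\eta$. First I would iterate Lemma~\ref{lemmaconver2}: it gives $\|u_{j+1}\|_{H^1(D)^d}\preceq\|u_j\|_{H^1(D_0)^d}\le\|u_j\|_{H^1(D)^d}$ for $j\ge 1$, with a hidden constant $C$ that depends only on the geometry of $D_0$ and $D_1$ (through the Korn inequality constants and the norms of the harmonic-extension and trace operators) and \emph{not} on $j$. A straightforward induction starting at $j=1$ then yields $\|u_j\|_{H^1(D)^d}\le C^{\,j-1}\|u_1\|_{H^1(D)^d}$ for all $j\ge 1$. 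Next I would use Lemmas~\ref{lemmaconver1} and~\ref{mainlemma} to bound the base cases: $\|u_0\|_{H^1(D)^d}$ is controlled by $\|f\|_{H^{-1}(D)^d}+\|g\|_{H^{1/2}(\partial D)^d}$ directly by \eqref{eq:4.35}, and, writing $u_1=\widetilde u_1+\sum_\ell c_{1;\ell}\chi_{D_1;\ell}$ as in \eqref{eq:35} with the rigid-body coefficients fixed by the positive-definite system \eqref{eq:37}, Lemma~\ref{mainlemma} together with \eqref{eq:4.36}--\eqref{eq:4.37} gives $\|u_1\|_{H^1(D)^d}\preceq\|f\|_{H^{-1}(D)^d}+\|g\|_{H^{1/2}(\partial D)^d}$ as well.

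With these estimates in hand, $\sum_{j=0}^\infty\eta^{-j}\|u_j\|_{H^1(D)^d}\le\|u_0\|_{H^1(D)^d}+\|u_1\|_{H^1(D)^d}\sum_{j\ge 1}(C/\eta)^{j-1}$, so the series converges absolutely in $H^1(D)^d$ whenever $\eta>C$; summing the tail and collecting all hidden constants into a single $C_1$ also yields the quantitative estimate $\|u_\eta-\sum_{j=0}^J\eta^{-j}u_j\|_{H^1(D)^d}\le C_1\big(\|f\|_{H^{-1}(D)^d}+\|g\|_{H^{1/2}(\partial D)^d}\big)\sum_{j=J+1}^\infty(C/\eta)^j$. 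It remains to check that the limit $u_\infty:=\sum_{j=0}^\infty\eta^{-j}u_j$ is the solution $u_\eta$ of \eqref{eq:7}. By construction the terms $u_j$ were chosen precisely so that, after matching equal powers of $\eta$, the identity \eqref{eq:15new} holds termwise; summing those identities---legitimate because the series converges absolutely in $H^1(D)^d$ and the bilinear forms ${\cal A}_{D_0}$ and ${\cal A}_{D_1}$ are continuous---shows that $u_\infty$ satisfies \eqref{eq:12}, and the ellipticity of ${\cal A}$ (uniqueness for \eqref{eq:7}) forces $u_\infty=u_\eta$. That the asymptotic limit $u_0$ solves \eqref{eq:18new} and is given by \eqref{eq:23} was already established in the derivation of Section~\ref{subsec:3.1}, so nothing new is needed there.

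The main obstacle is the uniformity claim underlying the induction: one must be certain that the hidden constant in Lemma~\ref{lemmaconver2} is genuinely $j$-independent, i.e.\ that it is built only from the Korn constants of $D_0$ and $D_1$ and from the operator norms of the Dirichlet harmonic extension and of the trace map, and not from the particular datum $u_j$. If that is granted, the induction produces a true geometric factor $C^{\,j-1}$ and the threshold $\eta>C$ is exactly the radius of convergence of the resulting power series in $1/\eta$. A secondary, more routine point is justifying the interchange of the infinite summation with the bilinear forms when verifying \eqref{eq:12}; this is precisely where absolute convergence in the $H^1(D)^d$ norm (rather than merely weak or pointwise convergence of the partial sums) is what makes the argument go through.
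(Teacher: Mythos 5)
Your proposal is correct and follows essentially the same route as the paper: iterate Lemma \ref{lemmaconver2} to obtain the geometric bound $\|u_j\|_{H^1(D)^d}\preceq C^{j-1}\|\widetilde{u}_1\|_{H^1(D_0)^d}$, control the base cases via Lemma \ref{lemmaconver1}, and sum the resulting geometric series for $\eta>C$. The only additions you make — explicitly identifying the sum with $u_\eta$ via uniqueness for \eqref{eq:7} and writing out the tail estimate — are details the paper defers to the subsequent corollary or leaves implicit, and they are handled correctly.
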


\begin{proof}
From the Lemma \ref{lemmaconver2} applied repeatedly $j-1$ times, we get that for every $j\geq 2$ there is a constant $C$ such that
\begin{eqnarray*}
\|u_j\|_{H^1(D)^d} & \leq & C\|u_{j-1}\|_{H^1(D_0)^d}\leq C\|u_{j-1}\|_{H^1(D)^d}\\
& \leq & \cdots \leq C^{j-1}\|\widetilde{u}_1\|_{H^1(D_0)^d}
\end{eqnarray*}
and then
\[
\left\Vert \sum_{j=2}^{\infty}\eta^{-j}u_j\right\Vert_{H^1(D)^d}\leq \frac{\|\widetilde{u}_1\|_{H^1(D_0)^d}}{C}\sum_{j=2}^{\infty}\left(\frac{C}{\eta}\right)^{j}.
\]
The last expansion converges when $\eta>C$. Using \eqref{eq:4.35} and \eqref{eq:4.36} we conclude that there is a constant $C_1$ such that
\[
\left\Vert \sum_{j=0}^{\infty}\eta^{-j}u_j\right\Vert_{H^1(D)^d}\preceq C_1\left(\|f\|_{H^{-1}(D)^d}+\|g\|_{H^{1/2}(\partial D)^d}\right)\sum_{j=0}^{\infty}\left(\frac{C}{\eta}\right)^j.
\]
Moreover, the asymptotic limit $u_0$ satisfies \eqref{eq:18new}.
\end{proof}
Combining Lemma \ref{mainlemma} with the results analogous to lemmas \ref{lemmaconver1} and \ref{lemmaconver2} we get convergence for the expansion \eqref{eq:13} with the boundary condition \eqref{eq:14}.

\begin{corollary}
There are positive constants $C$ and $C_1$ such that for every $\eta >C$, we have
\[
\left\Vert u-\sum_{j=0}^{J}\eta^{-j}u_j\right\Vert_{H^1(D)^d}\leq C_1\left(\|f\|_{H^{-1}(D)^d}+\|g\|_{H^{1/2}(D)^d}\right)\sum_{j=J+1}^{\infty}\left(\frac{C}{\eta}\right)^j,
\]
for $J\geq 0$.
\end{corollary}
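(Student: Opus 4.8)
The plan is to obtain the truncation estimate from two facts that are essentially already available: first, that the sum of the formal series coincides with the exact solution $u_\eta$, and second, a geometric bound on the tail of the series, which follows from iterating Lemmas \ref{mainlemma}, \ref{lemmaconver1} and \ref{lemmaconver2}.

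First I would fix $\eta>C$, with $C$ the constant of the Theorem above, so that $\widetilde u:=\sum_{j=0}^{\infty}\eta^{-j}u_j$ converges absolutely in $H^1(D)^d$, and I would verify that $\widetilde u=u_\eta$. Since each $u_j$ satisfies the boundary conditions in \eqref{eq:14}, absolute $H^1(D)^d$-convergence gives $\widetilde u=g$ on $\partial D$. Then, for a fixed test function $v\in H_0^1(D)^d$, I would substitute the series into the weak formulation \eqref{eq:12}; because ${\cal A}_{D_0}$, ${\cal A}_{D_1}$ and ${\cal F}$ are continuous and the convergence is absolute, the sum may be moved past the bilinear forms, reproducing exactly the identity \eqref{eq:15new}. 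By the way the $u_j$ were built in Section \ref{subsec:3.1}, the coefficient of $\eta^{1}$ vanishes by \eqref{eq:16}, the coefficient of $\eta^{0}$ equals ${\cal F}(v)$ by \eqref{eq:17new}, and the coefficient of each $\eta^{-j}$ with $j\geq1$ vanishes by \eqref{eq:31}. Hence $\widetilde u$ solves \eqref{eq:12} with the correct Dirichlet data, and uniqueness of the weak solution (Lax--Milgram, Section \ref{SectionB.3}) forces $u_\eta=\widetilde u$; in particular
\[
u-\sum_{j=0}^{J}\eta^{-j}u_j=\sum_{j=J+1}^{\infty}\eta^{-j}u_j .
\]

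Next I would bound the tail by $\Vert\sum_{j=J+1}^{\infty}\eta^{-j}u_j\Vert_{H^1(D)^d}\le\sum_{j=J+1}^{\infty}\eta^{-j}\Vert u_j\Vert_{H^1(D)^d}$. Applying Lemma \ref{lemmaconver2} repeatedly $j-1$ times gives $\Vert u_j\Vert_{H^1(D)^d}\le C^{j-1}\Vert\widetilde u_1\Vert_{H^1(D_0)^d}$ for $j\geq2$, while Lemma \ref{lemmaconver1} (together with Lemma \ref{mainlemma}) controls $\Vert u_0\Vert_{H^1(D)^d}$, $\Vert u_1\Vert_{H^1(D)^d}$ and $\Vert\widetilde u_1\Vert_{H^1(D_0)^d}$ by $\Vert f\Vert_{H^{-1}(D)^d}+\Vert g\Vert_{H^{1/2}(\partial D)^d}$ up to a constant, via \eqref{eq:4.35}--\eqref{eq:4.37}. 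After enlarging $C$ if necessary so that the $j=0$ and $j=1$ terms also take the form $C^{j}(\Vert f\Vert_{H^{-1}(D)^d}+\Vert g\Vert_{H^{1/2}(\partial D)^d})$, I obtain $\eta^{-j}\Vert u_j\Vert_{H^1(D)^d}\le C_1(\Vert f\Vert_{H^{-1}(D)^d}+\Vert g\Vert_{H^{1/2}(\partial D)^d})(C/\eta)^{j}$, and summing over $j\geq J+1$ yields the claimed inequality; the geometric series converges precisely because $\eta>C$.

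The last paragraph is just geometric bookkeeping. The step requiring the most care is the identification $u_\eta=\widetilde u$: one must justify interchanging the infinite sum with the bilinear forms (this is exactly where absolute convergence in $H^1(D)^d$ is needed) and check that the recursions \eqref{eq:16}, \eqref{eq:17new}, \eqref{eq:31} really do annihilate every power of $\eta$ except $\eta^{0}$, whose coefficient must be exactly ${\cal F}(v)$ — that is, that the construction of the $u_j$ was arranged precisely so this telescoping works. Once that is in place, everything reduces to the three lemmas cited in the statement.
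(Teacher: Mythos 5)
Your proof is correct and follows essentially the same route as the paper: the corollary is obtained from the geometric tail bound $\|u_j\|_{H^1(D)^d}\leq C^{j-1}\|\widetilde{u}_1\|_{H^1(D_0)^d}$ produced by iterating Lemma \ref{lemmaconver2}, combined with the estimates \eqref{eq:4.35}--\eqref{eq:4.37} of Lemma \ref{lemmaconver1}, exactly as in the proof of the preceding theorem, only summing from $j=J+1$ instead of $j=0$. Your explicit verification that the sum of the series coincides with $u_\eta$ (via absolute convergence, the recursions \eqref{eq:16}, \eqref{eq:17new}, \eqref{eq:31}, and uniqueness from Lax--Milgram) is a step the paper leaves implicit, and it is a welcome addition rather than a deviation.
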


\section{The Case of Highly Elastic Inclusions}

In this section we derive and analyze expansions for the case of high-elastic inclusions. As before, we present the case of one single inclusion. The analysis is similar to the one in previous section and it is not presented here.

\subsection{Expansion Derivation: One High-Elastic Inclusion}\label{section4.1}

Let $E$ defined by
\begin{equation} \label{eq:38}
E(x)=\left\{\begin{array}{ll}
\epsilon, & x \in D_1,\\
1, & x \in D_0=D\setminus \overline{D}_1,
\end{array}\right.
\end{equation}
and denote by $u_\epsilon$ the solution of \eqref{eq:7}. We assume that $D_1$ is compactly included in $D$ ($\overline{D}_1\subset D$). Since $u_\epsilon$ of \eqref{eq:7} with the coefficient \eqref{eq:38} we have
\begin{equation}\label{eq:39}
{\cal A}_{D_0}(u_\epsilon,v)+\epsilon {\cal A}_{D_1}(u_\epsilon,v)=\int_Dfv,\quad \mbox{for all }v\in H_o^1(D)^d.
\end{equation}
We try to determine $\{ u_j\}_{j=-1}^\infty\subset H_0^1(D)^d$ such that
\begin{equation}\label{eq:40}
u_\epsilon =\epsilon^{-1}u_{-1}+u_0+\epsilon u_1+\epsilon^2u_2+\cdots=\sum_{j=-1}^{\infty}\epsilon^ju_j,
\end{equation}
and such that they satisfy the following Dirichlet boundary conditions
\begin{equation}
u_0=g \mbox{ on }\partial D \quad u_j=0 \mbox{ on }\partial D \mbox{ for } j=-1, \mbox{ and }j\geq 1.
\end{equation}
Observe that when $u_{-1}\neq0$, then, $u_\epsilon$ does not converge when $\epsilon \to 0$.
If we substitute \eqref{eq:40} in \eqref{eq:39} we obtain that for all $v\in H_0^1(D)^d$ we have
\[
\epsilon^{-1}{\cal A}_{D_0}(u_0,v)+\sum_{j=0}^{\infty}\epsilon^{j}({\cal A}_{D_0}(u_j;v)+{\cal A}_{D_1}(u_{j-1},v))=\int_Dfv.
\]
Now we equate powers of $\epsilon$ and analyze all the resulting subdomain equations.

\subsubsection{Term corresponding to  $\epsilon^{-1}$} 

We obtain the equation 
\begin{equation}\label{eq:42}
{\cal A}_{D_0}(u_{-1}, v)=0 \mbox{ for all } v\in H^1_0(D).
\end{equation}
Since we assumed $u_{-1}=0$ on $\partial D$, we conclude that $u_{-1}^{(0)}=0$ in $D_0$. 

\subsubsection{Term corresponding to $\epsilon^0=1$} 
We get the equation 
\begin{equation}\label{eq:43}
{\cal A}_{D_0}(u_{0}, v)+{\cal A}_{D_1}(u_{-1}, v)=\int_{D}fv 
\mbox{ for all } v\in H^1_0(D).
\end{equation}
Since $u_{-1}^{(0)}=0$ in $D_0$, we conclude that 
$u_{-1}^{(1)}$ satisfies the following Dirichlet problem in $D_1$, 
\begin{equation}\label{eq:44}
\begin{array}{c}\displaystyle
{\cal A}_{D_1} (u_{-1}^{(1)}, z) =\int_{D_1}fz \quad \mbox{ for all } z\in 
H^1_0(D_1)\\\\
u_{-1}^{(1)}=0 \mbox{ on } \partial D_1.
\end{array}
\end{equation}
Now we compute $u_{0}^{(0)}$ in $D_0$. As before, from \eqref{eq:43}, 
\[
\widetilde{\tau}( u_0^{(0)})\cdot n_0=-\widetilde{\tau}( u_{-1}^{(1)})\cdot n_1 \mbox{ on } 
\partial D_1.
\]
Then we can obtain $u_0^{(0)}$ in $D_0$ by solving 
the following problem
\begin{equation}\label{eq:45}
\begin{array}{c}
{\cal A}_{D_0}(u_0^{(0)}, z)=\int_{D_0}fz -
\int_{\partial D_1 }\widetilde{\tau}(u_{-1}^{(1)})\cdot n_1  z
\quad \mbox{ for all } z\in H^1(D_0)^d \mbox{ with }
z=0 \mbox{ on } \partial D,\\\\
u_{0}^{(0)}=g \mbox{ on } \partial D\subset \partial D_0.
\end{array}
\end{equation}

\subsubsection{Term corresponding to $\epsilon^j$ with $j\geq 1$}

We get the equation
\[
{\cal A}_{D_0}( u_{j},v)+{\cal A}_{D_1}(u_{j-1}, v)=0
\]
which implies that $u_j^{(1)}$ is harmonic in $D_1$ for all $j\geq 0$ and that 
$u_j^{(0)}$ is harmonic in $D_0$ for $j\geq 1$. Also, 
\[
\widetilde{\tau}(u_{j}^{(0)})\cdot n_0=-\widetilde{\tau}(u_{j-1}^{(1)})\cdot n_1.
\]
Given $u_{j-1}^{(0)}$ in $D_0$ (e.g., $u_{0}$ in $D_0$ above) 
we can find $u_{j-1}^{(1)}$ in $D_1$ by  solving the Dirichlet problem with the known Dirichlet data, 
\begin{equation}\label{eq:46}
\begin{array}{c}
{\cal A}_{D_1}(u_{j-1}^{(1)}, z)=0, \quad\mbox{ for all } 
z\in H^1_0(D_1)^d  \\
u_{j-1}^{(1)}=u_{j-1}^{(0)},\quad \mbox{on } \partial D_1.
\end{array}
\end{equation}
To find 
$u_{j}^{(0)}$ in $D_0$ we solve the problem
\begin{equation}\label{eq:47}
\begin{array}{c}\displaystyle
{\cal A}_{D_0}(u_{j}^{(0)}, z)=-\int_{\partial D_0 }\widetilde{\tau}( 
u_{j-1}^{(1)})\cdot n_1z, \quad \mbox{for all } 
z\in H^1(D_0)^d, \mbox{ with } z=0, \quad \mbox{on } \partial D,\\
u_{j}^{(0)}=0\mbox{ on } \partial D.
\end{array}
\end{equation}
For the convergence is similar to the case in the Section \ref{convergenceelasti}.



\chapter{Final Comments and Conclusions}\label{Chapter5}

We state the summary about the procedure to compute the terms of the asymptotic expansion for $u_\eta$ with high-conductivity inclusions. Other cases can be considered. For instance, an expansion for the case where we interchange $D_0$ and $D_1$ can also be analyzed. In this case the asymptotic solution is not constant in the high-conducting part. Other case is the study about domains that contain low-high conductivity inclusions. This is part of ongoing research.\\

We reviewed some results and examples concerning asymptotic expansions for high-contrast coefficient elliptic equations (pressure equation). In particular, we gave some explicit examples of the computations of the few terms in one dimension and several numerical examples in two dimensions. We mention that a main application in mind is to find ways to quickly compute the first few terms, in particular the term $u_0$, which, as seen in the manuscript, it is an approximation of order $\eta^{-1}$ to the solution.\\

We will also consider an additional aim that is to implement a code in \textsc{MatLab} to illustrate some examples, which help us to understand the behavior of low-contrast and low-high contrast coefficients in elliptic problems. In particular, we will mention explicit applications in two dimension and compute a few consecutive terms. In this manuscript presented numerical results the high-contrast case.\\

We recall that a main difficulty is that the computation of the harmonic characteristic functions is computationally expensive. We developed numerically the idea of trying to approximate these functions by solving a local problem in the background where the approximated harmonic characteristic function is zero on the a boundary of a neighborhood of the inclusions.\\

Applications of the expansion for the numerical solution of the elasticity problem will be consider in the future.


\appendix



\chapter{Sobolev Spaces}

\minitoc

In this appendix, we collect and present, mainly  without proofs, a structured review in the study of the Sobolev spaces, which viewed largely form the analytical development to its applications in numerical methods, in particular,  to the Finite Element Method.

\section{Domain Boundary and its Regularity}

Before presenting the definition of Sobolev spaces, we present some properties of subsets of $\R^d$, in particular, open sets. We begin by introducing the notion of a domain.

\begin{definition}{\bf (Domain)}
A subset $D\subset \R ^d$ is said to be domain if it is nonempty, open and connected.
\end{definition}
For more details see \cite{Solin-book2005}.\\

We assume an additional property about of the domain boundaries, specifically, the continuity of the boundaries known as Lipschitz-continuity, which we define it as follows.

\begin{definition}{\bf (Lipschitz-continuity)}\label{DefA.2}
 The boundary $\partial D$ is Lipschitz continuous if there exist a finite number of open sets ${\cal O}_i$ with $i=1,\dots ,m$, that cover $\partial D$, such that, for every $i$, the intersection $\partial D \cap {\cal O}_i$ is the graph of a Lipschitz continuous function and $D\cap {\cal O}_i$ lies on one side of this graph.
\end{definition}

\section{Distributions and Weak Derivatives}

In this section we consider the space $L^p(D)$, where the functions are equal if they coincide almost everywhere in $D$ with $D\subset \R^d$. The functions that belong to $L^p(D)$ are thus equivalence classes of measurable functions that satisfy the condition of the following definition.  

\begin{definition}{\bf (The space $L^p$)}
Let $D$ be a domain in $\R^d$ and let $p$ be a positive real number. We denote by $L^p(D)$ the class of all measurable functions $f$ defined on $D$ for which
\[
\|f\|_{L^p(D)}=\left(\int_{D}|f(x)|^p\right)^{1/p}<+\infty.
\]
\end{definition}

The following notation is useful for operations with partial derivatives.

\begin{definition}{\bf (Multi-index)}
Let $d$ be the spatial dimension. A multi-index is a vector $(\alpha_1,\alpha_2,\dots ,\alpha_d)$ consisting of $d$ nonnegative integers. We denote by $|\alpha|=\sum_{j=1}^{d}\alpha_j$ as the length of the multi-index $\alpha$. Let $u$ be an $m$-times continuously differentiable function. We denote the $\alpha th$ partial derivative of $u$ by
\[
D^{\alpha}u=\frac{\partial^{|\alpha |}u}{\partial x_1^{\alpha_1}\partial x_2^{\alpha_2}\cdots \partial x_d^{\alpha_d}}.
\]
\end{definition}

In $L^p$ spaces there are discontinuous and nonsmooth functions whose derivatives are not defined in the classical sense, see \cite{Solin-book2005}, then we have a generalized notion of derivatives of functions in $L^p$ spaces. These derivatives are known as weak derivatives, whose suitable idea depends of following definition.

\begin{definition}{\bf (Test functions)}\label{DefA.5} 
Let $D\subset \R^d$ be an open set. The space of test functions (infinitely smooth functions with compact support) is defined by
\[
C_0^{\infty}(D)=\left\{\varphi \in C^{\infty}:\supp (\varphi)\subset D; \supp (\varphi) \mbox{ is compact}\right\}.
\]
\end{definition}
Recall that the support is defined by
\[
\supp(\varphi )=\overline{\left\{x\in D: \varphi (x)\neq 0\right\}}.
\]
For more details see \cite{Adams-book2003,Kesavan-book1989,Solin-book2005}.\\

Additionally, we consider the following space to complete the definition of weak derivatives. 

\begin{definition}{\bf (Space of locally-integrable functions)} Let $D\subset \R^d$ be an open set and $1\leq p<\infty$. A function $u:D\to \R$ is said to be locally $p$-integrable in $D$ if $u\in L^p(K)$ for every compact subset $K\subset D$. The space of all locally $p$-integrable functions in $D$ is denoted by $L_{\loc}^p(D)$.
\end{definition}

With this last concept we define the weak derivatives. 

\begin{definition}{\bf (Weak derivatives)}
Let $D\subset \R^d$ be an open set, $f\in L_{\loc}^1(D)$ and let $\alpha$ be a multi-index. The function $D_{w}^{\alpha}f\in L_{\loc}^1(D)$ is said to be the weak $\alpha$th derivative of $f$ if
\[
\int_DD_{w}^{\alpha}f\varphi dx=(-1)^{|\alpha|}\int_DfD^{\alpha}\varphi dx, \mbox{ for all }\varphi \in C_0^{\infty}(D).
\]
\end{definition}

\section{Sobolev Spaces $H^k$}\label{SectionA.3}

The Sobolev spaces are subspaces of $L^p$ spaces where some control of the regularity of the derivatives, see \cite{Adams-book2003}. Moreover, the structure and properties of these spaces achieved a suitable purpose for the analysis of partial differential equations.\\

The Sobolev spaces are defined as follows.

\begin{definition}{\bf(Sobolev space $W^{k,p}$)}\label{DefA.8}
Let $D\subset \R^d$ be an open set, $k\geq 1$ an integer number and $p\in [1,\infty]$. We define
\[
W^{k,p}(D)=\left\{ f\in L^p(D): D_{w}^{\alpha}f \mbox{ exists and belongs to }L^p(D) \mbox{ for all multi-index }\alpha, |\alpha|\leq k \right\}.
\]
\end{definition}

The Sobolev spaces are equipped with the Sobolev norm defined next.

\begin{definition}{\bf(Sobolev norm)}
For every $1\leq p<\infty$ the norm $\| \cdot\|_{k,p}$ is defined by
\[
\|f\|_{k,p}=\left(\int_D \sum_{|\alpha|\leq k}|D_w^{\alpha}f|^p dx\right)^{1/p}=\left(\sum_{|\alpha|\leq k}\|D_w^{\alpha}f\|_p^p\right)^{1/p}.
\]
For $p=\infty$ we define
\[
\|f\|_{k,\infty}=\max_{|\alpha|\leq k}\|D_w^{\alpha}f\|_{\infty}.
\]
\end{definition}

The relevant case for this document is the case $p=2$. We denote $H^k(D)=W^{k,p}(D)$ and recall the definition of space of square-summable functions on $D$, which is defined by
\[
L^2(D)=\left\{f:D\to \R : \int_D|f|^2 dx<\infty\right\}.
\]
It is a Hilbert space with the scalar product
\[
(f,g)_{L^2(D)}=\int_Dfgdx,
\]
and norm given by
\[
\|f\|_{L^2(D)}^{2}=(f,f)_{L^2(D)}=\int_D|f|^2dx.
\]
We define the Sobolev space $H^k(D)$ for any integer $k\geq 1$. This Spaces are usually used in the development and analysis of the numerical methods for partial differential equations, in particular the Finite Element Methods applied to elliptic problems. We write the definition of $H^k$ next due to its relevance for this document.

\begin{definition}{\bf(Sobolev space $H^k$)}
 A function $f$ belongs to $H^k(D)$ if, for every multi-index $\alpha$, with $|\alpha|\leq k$, there exists $D^{\alpha}f\in L^2(D)$, such that
\[
\left\langle D_w^{\alpha}f,\varphi \right\rangle=\int_DD_w^{\alpha}f\varphi dx, \quad \mbox{for all }\varphi \in C_0^{\infty}(D).
\]
\end{definition}

It is important to say that the space $H^k(D)$ is a Hilbert space, we consider the following result.
\begin{theorem}
Let $D\subset \R^d$ be a open set, and let $k\geq 1$ be a integer. The Sobolev space $W^{k,2}(D)=H^k(D)$ equipped with the scalar product
\[
(f,g)_{H^k}=\int_D \sum_{|\alpha|\leq k}D_w^{\alpha}fD_w^{\alpha}g dx=\sum_{|\alpha|\leq k}(D_w^{\alpha}fD_w^{\alpha}g)_{L^2(D)},
\]
is a Hilbert space.
\end{theorem}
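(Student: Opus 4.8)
The plan is to verify first that $(\cdot,\cdot)_{H^k}$ is genuinely an inner product and then to establish completeness, which is the only substantial part. Bilinearity and symmetry of $(f,g)_{H^k}$ follow immediately from the corresponding properties of the $L^2(D)$ inner product, since $(f,g)_{H^k}$ is a finite sum of terms $(D_w^\alpha f, D_w^\alpha g)_{L^2(D)}$. For positivity note that $(f,f)_{H^k}=\sum_{|\alpha|\le k}\|D_w^\alpha f\|_{L^2(D)}^2\ge \|f\|_{L^2(D)}^2$, so $(f,f)_{H^k}=0$ forces $f=0$ in $L^2(D)$; hence $(\cdot,\cdot)_{H^k}$ is an inner product whose induced norm is precisely the Sobolev norm $\|\cdot\|_{k,2}$ from the previous definition.

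For completeness I would start from an arbitrary Cauchy sequence $\{f_n\}$ in $H^k(D)$. From the identity $\|f_n-f_m\|_{k,2}^2=\sum_{|\alpha|\le k}\|D_w^\alpha f_n-D_w^\alpha f_m\|_{L^2(D)}^2$ (using linearity of weak derivatives) it follows that, for each multi-index $\alpha$ with $|\alpha|\le k$, the sequence $\{D_w^\alpha f_n\}$ is Cauchy in $L^2(D)$. Since $L^2(D)$ is complete, there exist functions $g_\alpha\in L^2(D)$ with $D_w^\alpha f_n\to g_\alpha$ in $L^2(D)$; write $f:=g_0$, so that $f_n\to f$ in $L^2(D)$.

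The key step is to show that $g_\alpha$ is the weak $\alpha$th derivative of $f$, i.e. $g_\alpha=D_w^\alpha f$ for every $|\alpha|\le k$. Fix $\varphi\in C_0^\infty(D)$. By the definition of weak derivative applied to each $f_n$,
\[
\int_D D_w^\alpha f_n\,\varphi\,dx=(-1)^{|\alpha|}\int_D f_n\,D^\alpha\varphi\,dx.
\]
Both $\varphi$ and $D^\alpha\varphi$ lie in $L^2(D)$ (they have compact support), so by the Cauchy--Schwarz inequality the left-hand side converges to $\int_D g_\alpha\varphi\,dx$ and the right-hand side converges to $(-1)^{|\alpha|}\int_D f\,D^\alpha\varphi\,dx$; for instance $\left|\int_D(D_w^\alpha f_n-g_\alpha)\varphi\,dx\right|\le\|D_w^\alpha f_n-g_\alpha\|_{L^2(D)}\|\varphi\|_{L^2(D)}\to 0$. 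Passing to the limit gives $\int_D g_\alpha\varphi\,dx=(-1)^{|\alpha|}\int_D f\,D^\alpha\varphi\,dx$ for all $\varphi\in C_0^\infty(D)$, which is exactly the statement that $D_w^\alpha f$ exists and equals $g_\alpha\in L^2(D)$.

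Hence $f\in H^k(D)$, and $D_w^\alpha f_n\to D_w^\alpha f$ in $L^2(D)$ for each $|\alpha|\le k$, so $\|f_n-f\|_{k,2}^2=\sum_{|\alpha|\le k}\|D_w^\alpha f_n-D_w^\alpha f\|_{L^2(D)}^2\to 0$, i.e. $f_n\to f$ in $H^k(D)$. This proves that $H^k(D)$ is complete and therefore a Hilbert space. I do not anticipate a genuine obstacle: the only point requiring care is the interchange of limit and integral identifying the $L^2$-limit $g_\alpha$ with the weak derivative $D_w^\alpha f$, and that is handled cleanly by Cauchy--Schwarz since the test function and its derivatives are bounded with compact support.
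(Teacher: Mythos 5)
Your proof is correct and complete. The paper itself offers no argument for this theorem — it simply defers to the reference (Tartar) — so there is nothing to compare against; what you have written is the standard argument one finds in such references: reduce completeness of $H^k(D)$ to completeness of $L^2(D)$ componentwise over multi-indices, identify the $L^2$-limits $g_\alpha$ with the weak derivatives of the limit function $f$ by passing to the limit in the integration-by-parts identity against test functions (justified by Cauchy--Schwarz, since $\varphi$ and $D^\alpha\varphi$ are bounded with compact support and hence lie in $L^2(D)$ even when $D$ is unbounded), and conclude convergence in the $H^k$ norm. The verification that the pairing is an inner product is likewise handled correctly via $(f,f)_{H^k}\ge\|f\|_{L^2(D)}^2$. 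No gaps.
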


\begin{proof}
See \cite{Tartar-book2000}.
\end{proof}
The space $H^k(D)$ with the scalar product induce a norm $\| \cdot \|_{H^k(D)}$ which is given by
\[
\|f\|_{H^k(D)}^2=(f,f)_{H^k(D)}=\sum_{|\alpha|\leq k}\int_D|D_w^{\alpha}f|^2dx.
\]
And a seminorm $|\cdot |_{H^k(D)}$ that is given by
\[
|f|_{H^k(D)}^2=\sum_{|\alpha|=k}\int_D|D_w^{\alpha}f|^2dx.
\]
In the case $k=1$, we have
\[
\|f\|_{H^1(D)}^2=\|f\|_{L^2(D)}^2+|f|_{H^1(D)}^2=\|f\|_{L^2(D)}^2+\int_D|\nabla f|^2dx,
\]
where we denote the operator $\nabla$ by
\[
\nabla =\left(\frac{\partial}{\partial x_1},\dots,\frac{\partial}{\partial x_d}\right).
\]
Frequently it is common to considered subspaces of the space $H^k(D)$ with important properties or restrictions. We will consider, in particular,  the subspace $H_0^k(D)$ defined next.

\begin{definition}{\bf (Spaces $H_0^k$)}
Let $D\subset \R^d$ be a open set. We denote the closure of $C_0^{\infty}(D)$ in $H^k(D)$ by
\[
H_0^k(D)=\left\{g\in H^k(D):D^{\alpha}g=0 \mbox{ on }\partial D, \mbox{ for all }|\alpha|<k\right\}.
\]
Additionally, the space $H_0^k(D)$ is equipped with the norm of $H^k(D)$. In particular, $H_0^1(D)$ is equipped with the scalar product of $H^1(D)$.
\end{definition}

\section{The Spaces of Fractional Order $H^s$, with $s$ not an Integer}

In this section we consider the notion of the standard Sobolev spaces of fractional order, in particular for $s\geq 0$. This Spaces are used in important results, e.g., the Poincar\'e-Friedrich inequalities and Trace Theorem. Under special  conditions we get the following lemma.
\begin{definition}
Let $\sigma \in (0,1)$. Then, the norm
\[
\left(\|f\|_{L^2(D)}^2+|f|_{H^{\sigma}(D)}\right)^{1/2},
\]
with the seminorm
\[
|f|_{H^{\sigma}(D)}=\int_{D}\int_{D}\frac{|f(x)-f(y)|^2}{|x-y|^{2\sigma +n}}dxdy,
\]
provides a norm in $H^{\sigma}(D)$. Let $s>0$, with $[s]$ the integer part of $s$ and $\sigma=s-[s]$. Then, the norm in $H^s(D)$ is given by
\[
\left(\|f\|_{H^{[s]}(D)}+|f|_{H^{s}(D)}^2\right)^{1/2},
\]
with the seminorm
\[
|f|_{H^s(D)}^2=\sum_{|\alpha|=[s]}\int_D\int_D\frac{|D^{\alpha}f(x)-D^{\alpha}f(y)|^2}{|x-y|^{2\sigma +n}}.
\]
\end{definition}
For more details see \cite{Adams-book2003}.\\

We can also consider subspaces as before.
\begin{remark}
We define $H_0^s(D)$ as the closure of $C_0^{\infty}(D)$ in $H^s(D)$. We observe that $H_0^s(D)$ is a proper subspace of $H^s(D)$ if and only if $s>1/2$:
\[
\left\{ \begin{array}{cc}
H_0^s(D)=H^s(D), & s\leq 1/2,\\
H_0^s(D)\neq H^s(D), & s> 1/2.
\end{array}\right.
\]
\end{remark}
For details see the book \cite{Adams-book2003} and for a discussion of the case $s=1/2$ see \cite{mclean2000strongly}.

\begin{definition}
Let $s$ a non-negative real number, then the space $H^{-s}(D)$ is by definition a dual space of $H_0^s(D)$. Given a functional $u\in H^{-s}(D)$ and a function $v\in H_0^s(D)$, we consider the value of $u$ at $v$ as $\langle u,v \rangle $. The space $H^{-s}(D)$ is then equipped with the dual norm
\[
\|u\|_{H^{-s}(D)}=\sup_{v\in H_0^s(D)}\frac{\langle u,v \rangle}{\|v\|_{H^s(D)}}.
\]
\end{definition}
For more details and fundamental properties see \cite{Adams-book2003,MR1777711}.

\section{Trace Spaces}\label{Sec:A.5}

In general Sobolev space are defined using the spaces $L^p(D)$ and the weak derivatives. Hence, functions in Sobolev spaces are defined only almost everywhere in D. The boundary $\partial D$ usually has measure zero in $\R^d$, then, we see that the value of a Sobolev function is not necessarily well defined on $\partial D$. However, it is possible to define the trace of the Sobolev function on the boundary $\partial D$, its trace coincides with the boundary value. We consider the following definition.

\begin{definition}{\bf (Trace of the function $W^{k,p}$)}
For a function $f\in W^{k,p}(D)$ that is continuous to the boundary $\partial D$, its trace in the boundary $\partial D$ is defined by the function $\overline{f}$, such that
\[
\overline{f}(x)=f(x) \quad \mbox{for all }x\in \partial D.
\]
\end{definition}
Now we get the result about the traces of the functions $H^1$

\begin{theorem}{\bf (Traces of functions $H^1$)}\label{TheoremA.2}
Let $D\subset \R^d$ be a bounded domain with Lipschitz-continuous boundary. Then there exists a continuous linear operator ${\cal T}:H^1\to H^{1/2}(\partial D)$ such that
\begin{enumerate}
\item [(i)] $({\cal T}f)(x)=f(x)$ for all $x\in \partial D$ if $f\in H^1\cap C(\overline{D})$.

\item [(ii)] There exists a constant $C>0$ such that
\[
\|{\cal T}f\|_{H^{1/2}(\partial D)}\leq C\|f\|_{H^1(D)},
\]
for all $f\in H^1(D)$.
\end{enumerate}
\end{theorem}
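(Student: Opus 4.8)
The plan is to follow the classical localization-and-flattening argument. First I would establish the trace estimate on the model half-space $\R^d_+=\{(x',x_d):x'\in\R^{d-1},\,x_d>0\}$, whose boundary we identify with $\R^{d-1}$. For $u\in C^1(\overline{\R^d_+})$ with bounded support, the fundamental theorem of calculus gives $u(x',0)=-\int_0^\infty \partial_{x_d}u(x',t)\,dt$, so squaring and applying Cauchy--Schwarz yields, after integrating in $x'$, the $L^2$ bound $\|u(\cdot,0)\|_{L^2(\R^{d-1})}\le \|u\|_{H^1(\R^d_+)}$. The harder part is the fractional estimate
\[
|u(\cdot,0)|_{H^{1/2}(\R^{d-1})}^2=\int_{\R^{d-1}}\int_{\R^{d-1}}\frac{|u(x',0)-u(y',0)|^2}{|x'-y'|^{d}}\,dx'\,dy'\preceq \|u\|_{H^1(\R^d_+)}^2,
\]
which I would obtain by writing $u(x',0)-u(y',0)$ as the line integral of $\nabla u$ along a path that rises to height $\sim|x'-y'|$, moves horizontally, and descends, applying Cauchy--Schwarz on each segment and then evaluating the resulting double integrals by a change of variables; alternatively one may argue via the Fourier transform in $x'$, using $(1+|\xi'|^2)^{1/2}|\widehat u(\xi',0)|^2\preceq \int_0^\infty(|\widehat u(\xi',t)|^2+|\partial_t\widehat u(\xi',t)|^2)\,dt$ together with Plancherel.

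Next I would globalize. Using Definition \ref{DefA.2}, cover $\partial D$ by finitely many open sets $\mathcal{O}_1,\dots,\mathcal{O}_m$ in each of which, after a rigid motion, $\partial D\cap\mathcal{O}_i$ is the graph of a Lipschitz function with $D\cap\mathcal{O}_i$ on one side; add an open set $\mathcal{O}_0$ with $\overline{\mathcal{O}_0}\subset D$ so that $\{\mathcal{O}_i\}_{i=0}^m$ covers $\overline D$, and pick a smooth partition of unity $\{\varphi_i\}_{i=0}^m$ subordinate to this cover. For $u\in C^1(\overline D)$ decompose $u=\sum_{i=0}^m\varphi_i u$: the term $\varphi_0 u$ contributes nothing on $\partial D$, while for $i\ge 1$ the bi-Lipschitz map that flattens $\partial D\cap\mathcal{O}_i$ transports $\varphi_i u$ to a function in $H^1$ of the model half-space (the chain rule remains valid under bi-Lipschitz changes of variables in $H^1$), whose trace is controlled by the model estimate above. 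Since a bi-Lipschitz boundary map preserves the $H^{1/2}$ norm up to a constant, pulling back and summing over $i$ gives $\|\mathcal{T}u\|_{H^{1/2}(\partial D)}\le C\|u\|_{H^1(D)}$ with $\mathcal{T}u=u|_{\partial D}$ for such $u$.

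Finally, since $C^1(\overline D)$ (equivalently $C^\infty(\overline D)$) is dense in $H^1(D)$ for a bounded Lipschitz domain, the map $u\mapsto u|_{\partial D}$ extends by continuity to a unique bounded linear operator $\mathcal{T}:H^1(D)\to H^{1/2}(\partial D)$, giving (ii); property (i) holds on the dense class by construction and persists for $f\in H^1\cap C(\overline D)$ because the approximating sequence can be chosen to converge uniformly on $\overline D$ as well. I expect the main obstacle to be the sharp $H^{1/2}$ trace estimate on the half-space, together with the bookkeeping required to check that only bi-Lipschitz (not $C^1$) regularity of the charts is used, so that every transported function stays in the correct Sobolev class.
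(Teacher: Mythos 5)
Your proposal is a correct outline of the classical localization--flattening proof of the trace theorem; the paper itself offers no argument here and simply defers to the references (Adams; Lions--Magenes), where essentially this same proof appears. The only point worth tightening is the half-space $L^2$ step: applying Cauchy--Schwarz directly to $\left(\int_0^\infty \partial_{x_d}u\,dt\right)^2$ produces a divergent factor unless you use the bounded support explicitly, so it is cleaner to write $|u(x',0)|^2=-2\int_0^\infty u\,\partial_t u\,dt\le\int_0^\infty\left(|u|^2+|\partial_t u|^2\right)dt$, which gives the estimate with a universal constant.
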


\begin{proof}
See \cite{Adams-book2003,lions1972non}.
\end{proof}

\section{Poincar\'e and Friedrichs Type Inequalities}

The Poincar\'e and Friedrichs inequalities are used in our case for the analysis of convergence of the asymptotic  expansions, in particular to bound the terms $\{u_j\}_{j=0}^{\infty}$ and its approximations. We consider the following general result, see \cite{toselli2005domain}.

\begin{theorem} \label{TheoremA.3}
Let $D\subset \R^d$ be a bounded Lipschitz domain and let $f_j$, $j=1,\dots, L$, $L\geq 1$, be functionals (not necessarily linear) in $H^1(D)$, such that, if $v$ is constant in $D$
\[
\sum_{j=1}^L|f_j(v)|^2=0, \quad \mbox{if and only if }v=0.
\]
Then, there exist constants, depending only on $D$ and the functionals $f_j$, such that, for $u\in H^1(D)$
\[
\|u\|_{L^2(D)}^{2}\leq C_1|u|_{H^1(D)}^2+C_2\sum_{j=1}^L|f_j(u)|^2.
\]
\end{theorem}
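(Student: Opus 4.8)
The plan is to argue by contradiction using the compact embedding $H^1(D)\hookrightarrow L^2(D)$ (Rellich--Kondrachov), which is available because $D$ is a bounded Lipschitz domain. I will use that the $f_j$ are continuous on $H^1(D)$ and positively homogeneous of degree one (e.g.\ continuous seminorms), so that $\sum_j|f_j(\lambda v)|^2=\lambda^2\sum_j|f_j(v)|^2$ for $\lambda\ge 0$; note that taking $v=0$ in the hypothesis already forces $f_j(0)=0$ for each $j$. Observe first that it is enough to produce a single constant $C$ with $\|u\|_{L^2(D)}^2\le C\bigl(|u|_{H^1(D)}^2+\sum_{j=1}^L|f_j(u)|^2\bigr)$ for all $u\in H^1(D)$, since one may then take $C_1=C_2=C$ (and conversely such a $C$ follows from any admissible pair). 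So assume no such $C$ exists: then for every $n\in\N$ there is $u_n\in H^1(D)$ with
\[
\|u_n\|_{L^2(D)}^2 > n\Bigl(|u_n|_{H^1(D)}^2+\sum_{j=1}^L|f_j(u_n)|^2\Bigr).
\]

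In particular $u_n\neq 0$ in $L^2(D)$, so I normalize by setting $w_n=u_n/\|u_n\|_{L^2(D)}$. Dividing the inequality by $\|u_n\|_{L^2(D)}^2$ and using the degree-one homogeneity of $|\cdot|_{H^1(D)}$ and of the $f_j$ gives $\|w_n\|_{L^2(D)}=1$ and
\[
|w_n|_{H^1(D)}^2+\sum_{j=1}^L|f_j(w_n)|^2<\frac1n ,
\]
so $|w_n|_{H^1(D)}\to 0$ and $\sum_j|f_j(w_n)|^2\to 0$. Since $\|w_n\|_{H^1(D)}^2=1+|w_n|_{H^1(D)}^2$ is bounded, Rellich's theorem gives a subsequence (still denoted $w_n$) with $w_n\to w$ in $L^2(D)$.

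Next I identify the limit. From $\|w_n-w_m\|_{H^1(D)}^2=\|w_n-w_m\|_{L^2(D)}^2+|w_n-w_m|_{H^1(D)}^2\le \|w_n-w_m\|_{L^2(D)}^2+2\bigl(|w_n|_{H^1(D)}^2+|w_m|_{H^1(D)}^2\bigr)\to 0$, the sequence is Cauchy in $H^1(D)$, so $w\in H^1(D)$ and $w_n\to w$ in $H^1(D)$. Then $|w|_{H^1(D)}=\lim_n|w_n|_{H^1(D)}=0$, hence $\nabla w=0$ on the connected set $D$, so $w$ is a constant function; moreover $\|w\|_{L^2(D)}=\lim_n\|w_n\|_{L^2(D)}=1$, so $w\neq 0$. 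Finally, by continuity of the $f_j$ on $H^1(D)$, $\sum_j|f_j(w)|^2=\lim_n\sum_j|f_j(w_n)|^2=0$ with $w$ a nonzero constant, contradicting the hypothesis. This proves the inequality, and the constant produced depends only on $D$ (through the Rellich embedding) and on the functionals $f_j$.

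The main point of care is the normalization step: it is essential that the $f_j$ scale correctly, which is why their continuity and positive homogeneity (in practice, the seminorm structure) are used — without some such restriction the statement is false (for constants, $f_1(v)=(\int_Dv)^4$ satisfies the stated hypothesis but the inequality fails for small constants). If explicit constants are wanted instead, one can give a constructive proof by combining the standard Poincar\'e inequality $\|u-\bar u\|_{L^2(D)}\le C_P|u|_{H^1(D)}$ (with $\bar u$ the mean of $u$) with the estimate $|f_j(\bar u)|\le |f_j(u)|+C\,|u|_{H^1(D)}$ and the equivalence of the norms $|\bar u|$ and $\bigl(\sum_j|f_j(\bar u)|^2\bigr)^{1/2}$ on the one-dimensional space of constant functions; this variant additionally uses subadditivity of the $f_j$.
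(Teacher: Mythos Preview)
Your argument by contradiction via Rellich--Kondrachov is exactly the approach the paper invokes (its proof is the one-line remark ``an application of the Rellich's Theorem''), and your execution is correct. You also rightly observe that the statement as written is missing hypotheses: without continuity and positive homogeneity of the $f_j$ the normalization step fails and the inequality itself can be false, as your example $f_1(v)=\bigl(\int_D v\bigr)^4$ shows; the intended formulation (as in the reference the paper follows) takes the $f_j$ to be continuous seminorms on $H^1(D)$, which is precisely the extra structure you supply.
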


\begin{proof}
The proof of the theorem is an application of the Rellich's Theorem, see e.g., \cite{Neecas-book2012}.
\end{proof}

With these important result we get the following two lemmas. 

\begin{lemma}\label{LemmaA.3}{\bf (Poincar\'e inequality)}
Let $u\in H^1(D)$. Then there exist constants, depending only on $D$, such that 
\[
\|u\|_{L^2(D)}^{2}\leq C_1|u|_{H^1(D)}^2+C_2\left(\int_Dudx\right)^2.
\]
\end{lemma}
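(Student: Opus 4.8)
The plan is to obtain the Poincaré inequality directly as a special case of the general result in Theorem \ref{TheoremA.3}. That abstract statement says: if $f_1,\dots,f_L$ are functionals on $H^1(D)$ such that the only constant function $v$ with $\sum_{j=1}^L |f_j(v)|^2 = 0$ is $v=0$, then there exist constants $C_1,C_2$ depending only on $D$ and the $f_j$ with
\[
\|u\|_{L^2(D)}^2 \leq C_1 |u|_{H^1(D)}^2 + C_2 \sum_{j=1}^L |f_j(u)|^2 \quad \text{for all } u \in H^1(D).
\]
So the whole task reduces to picking the right single functional and checking the hypothesis.

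First I would take $L=1$ and define the linear functional $f_1 \colon H^1(D) \to \mathbb{R}$ by $f_1(u) = \int_D u\,dx$. This is a bounded linear functional on $H^1(D)$ (indeed on $L^2(D)$, by Cauchy–Schwarz, since $|f_1(u)| \le |D|^{1/2}\|u\|_{L^2(D)}$), so it is an admissible choice of functional in the sense of Theorem \ref{TheoremA.3}. Next I would verify the nondegeneracy condition restricted to constants: if $v \equiv c$ is constant on $D$, then $f_1(v) = c\,|D|$, where $|D| > 0$ since $D$ is a (bounded, nonempty) domain. Hence $|f_1(v)|^2 = c^2 |D|^2 = 0$ forces $c = 0$, i.e. $v = 0$. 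This is exactly the hypothesis of Theorem \ref{TheoremA.3}.

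Applying Theorem \ref{TheoremA.3} with this choice of $f_1$ immediately yields constants $C_1, C_2$, depending only on $D$ (the dependence on $f_1$ is itself only a dependence on $D$, since $f_1$ is determined by $D$), such that
\[
\|u\|_{L^2(D)}^2 \leq C_1 |u|_{H^1(D)}^2 + C_2 \left( \int_D u\,dx \right)^2 \quad \text{for all } u \in H^1(D),
\]
which is precisely the claimed inequality. There is essentially no obstacle here: the only mild point to be careful about is that the conclusion of Theorem \ref{TheoremA.3} allows the constants to depend on the functionals, and one should note that this does not weaken the statement, because here the functional is canonically attached to $D$. If one preferred not to invoke the abstract theorem, the alternative route is the classical compactness (Rellich) argument by contradiction — assume a sequence $u_n$ with $\|u_n\|_{L^2} = 1$ violating the inequality with constants $n$, extract an $H^1$-bounded subsequence, pass to an $L^2$-convergent subsequence whose limit is a nonzero constant with zero average, contradiction — but since Theorem \ref{TheoremA.3} is already available and was proved by exactly this method, the one-line deduction above is the cleanest.
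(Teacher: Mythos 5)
Your proposal is correct and matches the paper's proof exactly: the paper likewise applies Theorem \ref{TheoremA.3} with $L=1$ and $f_1(u)=\int_D u\,dx$. Your additional verification that $f_1$ vanishes only on the zero constant is the (implicit) hypothesis check the paper omits, so the argument is complete.
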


\begin{proof}
Apply the theorem \ref{TheoremA.3} with $L=1$ and the linear functional
\[
f_1(u)=\int_Dudx.
\]
\end{proof}
Note that if $\int_Du=0$ then we can bound the full $H^1$ norm by the semi-norm.

\begin{lemma}\label{LemmaA.4}{\bf (Friedrichs inequality)}\
Let $\Gamma \subseteq \partial D$ have nonvanishing $(n-1)$-dimensional measure. Then, there exist constants, depending only on $D$ and $\Gamma$, such that, for $u\in H^1(D)$
\[
\|u\|_{L^2(D)}^{2}\leq C_1|u|_{H^1(D)}^2+C_2\|u\|_{L^2(\Gamma)}^2.
\]
In particular, if $u$ vanishes on $\Gamma$
\[
\|u\|_{L^2(D)}^2\leq C_1|u|_{H^1(D)}^2,
\]
and thus
\[
|u|_{H^1(D)}^2\leq \|u\|_{H^1(D)}^2\leq (C_1+1)|u|_{H^1(D)}^2.
\]
\end{lemma}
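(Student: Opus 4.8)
The plan is to deduce this directly from the general Poincar\'e--Friedrichs result, Theorem \ref{TheoremA.3}, by choosing a single functional tuned to $\Gamma$. First I would take $L=1$ and define the (nonlinear) functional
\[
f_1(v)=\|v\|_{L^2(\Gamma)}=\left(\int_\Gamma |v|^2\,ds\right)^{1/2},\qquad v\in H^1(D).
\]
Before invoking Theorem \ref{TheoremA.3} I must make sure $f_1$ is genuinely defined on all of $H^1(D)$: this is exactly where the Trace Theorem \ref{TheoremA.2} is used, since ${\cal T}v\in H^{1/2}(\partial D)$ embeds continuously in $L^2(\partial D)$, so its restriction to $\Gamma$ lies in $L^2(\Gamma)$ and $f_1(v)\le \|{\cal T}v\|_{L^2(\partial D)}\le C\|v\|_{H^1(D)}$.

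Next I would check the nondegeneracy hypothesis of Theorem \ref{TheoremA.3}. If $v\equiv c$ is constant on $D$, then $f_1(v)^2=|c|^2\int_\Gamma 1\,ds=|c|^2\,|\Gamma|$, and since $\Gamma$ has nonvanishing $(n-1)$-dimensional measure $|\Gamma|>0$, this vanishes if and only if $c=0$. Hence Theorem \ref{TheoremA.3} applies with this single functional and produces constants $C_1,C_2$ depending only on $D$ and $f_1$---hence only on $D$ and $\Gamma$---such that
\[
\|u\|_{L^2(D)}^2\le C_1|u|_{H^1(D)}^2+C_2|f_1(u)|^2=C_1|u|_{H^1(D)}^2+C_2\|u\|_{L^2(\Gamma)}^2
\]
for every $u\in H^1(D)$, which is the first claimed inequality.

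For the remaining assertions, suppose $u$ vanishes on $\Gamma$ in the trace sense; then $\|u\|_{L^2(\Gamma)}=0$ and the displayed estimate collapses to $\|u\|_{L^2(D)}^2\le C_1|u|_{H^1(D)}^2$. Adding $|u|_{H^1(D)}^2$ to both sides and using $\|u\|_{H^1(D)}^2=\|u\|_{L^2(D)}^2+|u|_{H^1(D)}^2$ yields $\|u\|_{H^1(D)}^2\le (C_1+1)|u|_{H^1(D)}^2$, while $|u|_{H^1(D)}^2\le\|u\|_{H^1(D)}^2$ is immediate from the definition of the full norm. The only point requiring care is the well-definedness of $f_1$ on $H^1(D)$, which is handled by the trace theorem; the genuinely hard analytic input (the compact Rellich-type embedding underpinning Theorem \ref{TheoremA.3}) is already packaged in that theorem and is simply cited.
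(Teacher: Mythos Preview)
Your proof is correct and follows exactly the same approach as the paper: apply Theorem \ref{TheoremA.3} with $L=1$ and $f_1(u)=\|u\|_{L^2(\Gamma)}$. Your version is simply more detailed, spelling out the well-definedness via the trace theorem, the verification of the nondegeneracy condition on constants, and the derivation of the two consequences.
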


\begin{proof}
Apply the theorem \ref{TheoremA.3} with $L=1$ and $f_1(u)=\|u\|_{L^2(\Gamma)}$.
\end{proof}

For more details about these inequalities, see \cite{Adams-book2003,Brezis-book2010,Neecas-book2012}.



\chapter{Elliptic Problems}

\minitoc
In this appendix we are dedicated to discuss and recall some theoretical results about the second-order linear elliptic problems.\\

First we assume a bounded open set $D\subset \R^d$ with Lipschitz-continuous boundary (defined in \ref{DefA.2}), and introduce for $u\in C^2(D)$ the second-order linear differential equation
\[
-\sum_{i,j=1}^d\frac{\partial}{\partial x_j}\left(\kappa_{ij}(x)\frac{\partial u}{\partial x_i}\right)+\sum_{i=1}^db_i(x)\frac{\partial u}{\partial x_i}+c(x)u=f(x),
\]
where the coefficient and the function $f$ satisfy the assumptions of $\kappa_{ij},b_i\in C^1(D)$, $c\in C(D)$ and $f\in C(D)$.

\section{Strong Form for Elliptic Equations}

The purpose of the text is in particular, to find solubility of (uniformly) elliptic second-order partial differential equations with boundary conditions.\\

Let $D\subset \R^d$ be a domain and $u:\overline{D}\to \R $, is the unknown variable $u(x)$, with $x\in \R^d$, such that
\[
\left\{ \begin{array}{ll}
Lu(x)=f(x),& \mbox{ in }D\\
\hspace{0.1in}u(x)=0, & \mbox{ on }\partial D. 
\end{array}\right.
\]
Where $f:D\to \R$ is a given function and $L$ is the second-order partial differential operator defined by
\[
Lu=-\sum_{i,j=1}^d\frac{\partial}{\partial x_j}\left(\kappa_{ij}(x)\frac{\partial u(x)}{\partial x_i}\right)+\sum_{i=1}^db_j(x)\frac{\partial u(x)}{\partial x_i}+c(x)u(x),
\]
or
\[
Lu=-\dive(\kappa(x)\nabla u)+\sum_{i=1}^db_j(x)\frac{\partial u(x)}{\partial x_i}+c(x)u(x),
\]
where the operator $\dive$ is defined by
\[
\dive u(x)=\nabla \cdot u(x)=\sum_{i=1}^d\frac{\partial}{\partial x_i}u(x)
\]
and the $d\times d$ symmetric matrix $\kappa(x)$ is defined by
\[
\kappa(x)=\left[\begin{array}{cccc}
\kappa_{11}(x) & \kappa_{12}(x) & \dots & \kappa_{1d}(x)\\
\kappa_{21}(x) & \kappa_{22}(x) & \dots & \kappa_{2d}(x)\\
\vdots & \vdots & \vdots & \vdots \\
\kappa_{d1}(x) & \kappa_{d2}(x) & \dots & \kappa_{dd}(x)\\
\end{array}\right], \mbox{ with }\kappa_{ij}=\kappa_{ji}, \mbox{ for }i\neq j \mbox{ and }i,j=1,\dots,d.
\]
For more details about of elliptic operators, see \cite{Evans-book1990,Grisvard-book1985,mclean2000strongly,Neecas-book2012}.\\

Now, for the development of the text we assume a special condition about the partial differential operator, in this case we consider the uniformly elliptic conditions, which is defined by.
 
\begin{definition}{\bf (Uniform ellipticity)}
We say the partial differential operator $L$ is uniformly  elliptic if there exists a constant $K>0$ such that 
\begin{equation}
\sum_{i,j=1}^d\kappa_{ij}(x)\xi_i \xi_j\geq K|\xi|^2,
\end{equation}
for almost everywhere $x\in D$ and all $\xi \in \R^d$.
\end{definition}

The ellipticity means that for each $x\in D$ the symmetric $d \times d$ matrix $\kappa(x)=\left[ \kappa_{ij}(x)\right]$ is positive definite,  with smallest eigenvalue greater than or equal to $K$, see \cite{Evans-book1990}. We assume that there exist $\kappa_{\min}$ and $\kappa_{\max}$ for each eigenvalue $\rho_{1}(x),\dots,\rho_{d}(x)$ of the matrix $\kappa(x)$ such that
\[
\kappa_{\min}\leq \rho_{\min}(x) \leq \cdots \leq \rho_{max}(x)\leq \kappa_{max},\quad \mbox{for all }x\in D,
\]
where $\rho_{\min}(x)$ and $\rho_{max}(x)$ are smallest and greatest eigenvalues of the matrix $\kappa(x)$.\\

In particular, we consider $D\subset \R^2$ (or $\R^3$) and the problem in its strong form homogeneous
\[
\left\{\begin{array}{ll}
Lu(x)=f(x), & \mbox{for all }x\in D\\
\hspace{0.1in}u(x)=0, & \mbox{for all }x \in \partial D.
\end{array}\right.
\]
We assume the operator $L$ as
\begin{equation} \label{eq:B.1}
Lu(x)=-\dive (\kappa(x)\nabla u(x)),
\end{equation}
with coefficient functions $b_i(x)=0$ and $c(x)=0$, then we get the strong form of the problem
\begin{equation}\label{eq:B.2}
\left\{\begin{array}{ll}
-\dive \left(\kappa(x)\nabla u(x)\right)=f(x), & \mbox{ for all }x\in D,\\
\hspace{0.5in}u(x)=0, & \mbox{ for all }x\in \partial D.
\end{array}\right.
\end{equation}
If the equation \eqref{eq:B.2} has solution, we say that the problem has a solution in the classic sense. However, only specific cases of elliptic differential equations have classic solutions, so we introduce in the next section a alternative method for the elliptic problems.

\section{Weak Formulation for Elliptic Problems}\label{Weakforsection}

In this section we introduce solutions in the weak sense for elliptic partial differential equations. This method requires building a weak (or variational) formulation of the differential equation, which can be put in a suitable function space framework, i.e., a Sobolev space $W^{k,p}$ given in the definition \ref{DefA.8}, particularly is a Hilbert space if $p=2$.\\

We consider a domain $D\subset \R^d$ and the strong form \eqref{eq:B.2} with the respective boundary conditions. To obtain the weak formulation of the problem \eqref{eq:B.2} first we multiply the equation by a test function $v\in C_0^{\infty}(D)$ and integrate over $D$, we have
\begin{equation}\label{eq:B.3}
-\int_D \dive \left(\kappa(x)\nabla u(x)\right)v(x)dx=\int_Df(x)v(x)dx,\mbox{ for all }v\in C_0^{\infty}(D).
\end{equation}
We assume that $v=0$ on $\partial D$.\\

From the equation \eqref{eq:B.3} and using the integration by parts we can write
\begin{equation}\label{eq:B.4}
{\cal A}(u,v)={\cal F}(v), \mbox{ for all } v\in H_0^1(D),
\end{equation}
where we have introduced the notation
\begin{align}
{\cal A}(u,v)&=\int_D 
\kappa(x)\nabla u(x)\cdot  \nabla v(x)dx,  
 &&\mbox{ for all }u\in H^1(D) \mbox{ and }v\in H_0^1(D),\nonumber 
\end{align}
and
\begin{align}
{\cal F}(v)&=\int_Df(x)v(x)dx, &&\quad  \mbox{for all }v\in H_0^1(D).\nonumber
\end{align}
For more details see for instance \cite{atkinson2009theoretical,Evans-book1990}.

\section{Existence of Weak Solutions}\label{SectionB.3}

We recall results about the existence of weak solutions for elliptic problems. We recall definitions above for ${\cal A}(u,v)$, ${\cal F}(v)$ and we consider $H$ a Hilbert space with the norm $\|\cdot\|_{H}$, scalar product $(\cdot,\cdot)$. We deduce following general result.

\begin{theorem}{\label{Lax-Milgram}\bf (Lax-Milgram)}
We assume that
\[
{\cal A}:H\times H\to \R,
\]
is a bilinear application, for which there exist constants $\alpha,\beta >0$ such that
\begin{enumerate}
\item [(i)] 
\[
|{\cal A}(u,v)|\leq \alpha \|u\|_H\|v\|_H, \quad\mbox{(continuity of }{\cal A}\mbox{)}
\]
and
\item[(ii)]
\[
\beta \|u\|_H^2\leq {\cal A}(u,u), \mbox{ for all }u\in H, \quad\mbox{ (ellipticity of }{\cal A}\mbox{)}.
\]
\end{enumerate}
Finally, let $f:H\to \R$ be a bounded linear functional in $H$. Then, there exists an unique $u\in H$ such that
\[
{\cal A}(u,v)={\cal F}(v), \mbox{ for all }v\in H.
\] 
\end{theorem}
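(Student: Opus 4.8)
The plan is to convert the variational identity into an operator equation on $H$ by means of the Riesz representation theorem, and then to show the resulting operator is invertible; uniqueness will be an immediate by-product of coercivity.

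First I would fix $u \in H$ and note that $v \mapsto {\cal A}(u,v)$ is linear and, by the continuity bound (i), bounded on $H$; the Riesz representation theorem then yields a unique element $Au \in H$ with ${\cal A}(u,v) = (Au,v)$ for all $v \in H$. Bilinearity of ${\cal A}$ makes $u \mapsto Au$ linear, and testing against $v = Au$ gives $\|Au\|_H \le \alpha\|u\|_H$, so $A : H \to H$ is a bounded linear operator. Applying Riesz once more to the bounded functional ${\cal F}$ produces $w \in H$ with ${\cal F}(v) = (w,v)$ for all $v$, so that the problem ${\cal A}(u,v) = {\cal F}(v)$ for all $v$ is equivalent to solving the single equation $Au = w$ in $H$.

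The core of the argument is then to prove $A$ is a bijection. Coercivity (ii), together with $(Au,u) = {\cal A}(u,u)$, gives $\beta\|u\|_H^2 \le (Au,u) \le \|Au\|_H\|u\|_H$, hence $\|Au\|_H \ge \beta\|u\|_H$ for all $u$. This immediately yields injectivity and, since it turns Cauchy sequences of images into Cauchy sequences of preimages, closedness of the range $R(A)$. To get $R(A) = H$ it suffices to check $R(A)^\perp = \{0\}$: if $z \perp R(A)$ then in particular $0 = (Az,z) = {\cal A}(z,z) \ge \beta\|z\|_H^2$, forcing $z = 0$. Since $R(A)$ is closed with trivial orthogonal complement, $R(A) = H$, so $u := A^{-1}w$ exists and is unique, with $\|u\|_H \le \beta^{-1}\|w\|_H$. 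Uniqueness can also be read off directly: two solutions $u_1, u_2$ satisfy ${\cal A}(u_1 - u_2, v) = 0$ for all $v$, and choosing $v = u_1 - u_2$ with (ii) forces $u_1 = u_2$.

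The only step that requires any care is surjectivity of $A$. An alternative route avoiding the closed-range lemma is a Banach fixed-point argument: for a parameter $\rho > 0$ set $T_\rho v = v - \rho(Av - w)$ and estimate $\|T_\rho v_1 - T_\rho v_2\|_H^2 \le (1 - 2\rho\beta + \rho^2\alpha^2)\|v_1 - v_2\|_H^2$ using (i) and (ii); for $0 < \rho < 2\beta/\alpha^2$ the constant is strictly less than $1$, so $T_\rho$ is a contraction on the complete space $H$ and its unique fixed point solves $Au = w$. Either way, continuity of $A$, injectivity, and uniqueness are free from (i) and (ii), and only the passage to surjectivity (equivalently, the choice of $\rho$) is the substantive point.
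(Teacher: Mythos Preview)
Your proof is correct. The Riesz-representation reduction to an operator equation $Au=w$, followed by either the closed-range argument or the contraction-mapping alternative, is the standard route and every step you sketch goes through; in particular your treatment of surjectivity via $R(A)^\perp=\{0\}$ is fine even without symmetry of ${\cal A}$, since $z\perp R(A)$ gives $0=(Az,z)={\cal A}(z,z)\ge\beta\|z\|_H^2$.

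As for comparison with the paper: the paper does not prove this theorem at all---it simply refers the reader to \cite{Brezis-book2010,Evans-book1990}. So you have supplied substantially more than the paper does. Your argument is in fact exactly the one found in those references (Evans uses the contraction-mapping version, Brezis the closed-range version), so there is nothing to correct or add.
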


\begin{proof}
See \cite{Brezis-book2010,Evans-book1990}.
\end{proof}

For the case \eqref{eq:B.4} and $D\subset \R^2$ we assume that there exist $\kappa_{\min}$ and $\kappa_{\max}$ such that
\[
0<\kappa_{\min}\leq \rho_{\min}(x)  \leq \rho_{\max}(x)\leq \kappa_{\max}, \quad\mbox{ for all }x\in D,
\]
where $\rho_{\min}(x)$ and $\rho_{\max}(x)$ are smallest and largest eigenvalue of the matrix $\kappa(x)$, then we have
\begin{eqnarray*}
|{\cal A}(u,v)| & = & \left| \int_D\kappa(x)\nabla u \cdot \nabla v\right|\\
& \leq & \left(\int_D\kappa(x)|\nabla u|^2\right)^{1/2}\left(\int_D\kappa(x)|\nabla v|^2\right)^{1/2}\\
& \leq & \kappa_{\max} \left(\int_D|\nabla u|^2\right)^{1/2}\left(\int_D|\nabla v|^2\right)^{1/2}\\
& \leq & \kappa_{\max} \left[\left(\int_D|\nabla u|^2\right)^{1/2}+\left(\int_D|u|^2\right)^{1/2}\right]\left[\left(\int_D|\nabla v|^2\right)^{1/2}+\left(\int_D|v|^2\right)^{1/2}\right]\\
& = & \kappa_{\max} \|u\|_{H^1(D)}\|v\|_{H^1(D)},
\end{eqnarray*}
where $\alpha=\kappa_{\max}$.\\

Now, we have
\[
\beta\|u\|_H^2\leq {\cal A}(u,u),
\]
by the Poincar\'e inequality in the Lemma \ref{LemmaA.3}, we get
\[
\int_D|u|^2\leq C\int_D|\nabla u|^2,
\]
with $C$ is a Poincar\'e inequality constant of $D$, by adding the term $\int_D|\nabla u|^2$ in both sides we have
\begin{eqnarray*}
\int_D|u|^2+\int_D|\nabla u|^2 & \leq & C\int_D|\nabla u|^2+ \int_D|\nabla u|^2\\
& = &(C+1)\kappa_{\min}^{-1}\kappa_{\min}\int_D|\nabla u|^2\\
& = & \kappa_{\min}^{-1}(C+1)\int_D\kappa_{\min}|\nabla u|^2\\
& \leq & \kappa_{\min}^{-1}(C+1)\int_D\kappa(x)|\nabla u|^2,
\end{eqnarray*}
therefore
\[
\beta\|u\|_{H^1(D)}^2\leq{\cal A}(u,u),
\]
where $\beta=\kappa_{min}(C+1)^{-1}$.

\section{About Boundary Conditions}

Above we consider boundary data $u=0$ on $\partial D$. In this case, we say that the problem has a homogeneous Dirichlet data. We present other situations in this section.

\subsection{Non-Homogeneous Dirichlet Boundary Conditions}

We consider the problem in its strong form 
\begin{equation}\label{strongf Dirichlet}
\left\{\begin{array}{ll}
Lu(x)=f(x), & \mbox{ in }D\\
\hspace{0.1in}u(x)=g(x), & \mbox{ on }\partial D,
\end{array}\right.
\end{equation}
where $g(x)\in C(\partial D)$, it means that $u(x)=g(x)$ on $\partial D$ in the trace sense, and the operator $L$ is defined in the equation \eqref{eq:B.1}. This may be possible, if $g(x)$ is the trace of the any function $H^1$, we say $w(x)$. We get that $\tilde{u}(x)=u(x)-w(x)$ belongs $H_0^1(D)$ and is a solution in the weak sense of the elliptic problem
\[
\left\{\begin{array}{ll}
L\tilde{u}(x)=\tilde{f}(x), & \mbox{ in }D\\
\hspace{0.1in}u(x)=0, & \mbox{ on }\partial D,
\end{array}\right.
\]
where $\tilde{f}(x)=f(x)-Lw(x)\in H^{-1}(D)$. The last problem is interpreted in the weak sense.

\subsection{Neumann Boundary Conditions}

We consider the problem in its strong form
\begin{equation}\label{Strongf Neumann}
\left\{\begin{array}{ll}
\hspace{0.3in}Lu(x)=f(x), & \mbox{ in }D\\
\nabla u(x)\cdot n=g(x), & \mbox{ on }\partial D,
\end{array}\right.
\end{equation}
where $g(x)\in C(\partial D)$ and $n$ is the unit outer normal vector  to $\partial D$. The weak formulation of the problem \eqref{Strongf Neumann} is derived next. Multiply \eqref{Strongf Neumann} by the test function $v\in C^{\infty}(D)\cap C^1(D)$, integrate over $D$ and using the Green's theorem we get
\[
\int_D\kappa(x)\nabla u(x) \cdot \nabla v(x)dx-\int_{\partial D}(\kappa(x)g(x))v(x)dS=\int_Df(x)v(x)dx,
\]
in linear forms, we have to find a $v\in H^1(D)$ such that 
\[
{\cal A}(u,v)={\cal F}(v),\quad\mbox{for all }v\in H^1(D),
\]
where
\begin{align}
{\cal A}(u,v)&=\int_D 
\kappa(x)\nabla u(x)\cdot  \nabla v(x)dx,  
 &&\mbox{ for all }u\in H^1(D) \mbox{ and }v\in H^1(D),\\ \nonumber
{\cal F}(v)&=\int_Df(x)v(x)dx+\int_{\partial D}(\kappa(x)g(x))v(x)dS, &&\quad  \mbox{for all }v\in H_0^1(D). 
\end{align}
Note that the bilinear form ${\cal A}(\cdot, \cdot)$ is given by the same equation as in the case of Dirichlet boundary conditions, it is different since the $H^1$ space is changed.
For more details in particular see \cite{Solin-book2005,toselli2005domain}.

\section{Linear Elasticity}\label{LinearElasticity}

In this section we recall the case described in the Chapter \ref{Chapter4} for the linear elasticity problem. We state some definitions for the model \eqref{sformelast} given in the Section \ref{sec:problem setting}.\\

We consider the equilibrium equations for a linear elastic material describes in any domain of $\R^d$. Let $D\subset \R^d$ polygonal domain or a domain with smooth boundary. Given $u\in H^1(D)^d$ we denote
\[
\epsilon =\epsilon (u)= \left[ \epsilon_{ij} =\frac{1}{2} \left(\frac{\partial u_i}{\partial x_j}+ \frac{\partial u_j}{\partial x_i} \right) \right],
\]
where $\epsilon$ is a strain tensor that  linearly depends on the derivatives of the displacement field
and we recall that $\tau(u)$ is stress tensor, which depends of the value of strains and is defined by
\[
\tau =\tau(u)=2\mu\epsilon(u)+\lambda\dive (u)I_{d\times d},
\]
where $I_{d\times d}$ is the identity matrix in $\R^d$ and $u$ is the displacement vector. the Lam\'e coefficients $\lambda=\lambda(x)$ are represent fundamental elastic modulus of isotropic bodies, often used to replace the two conventional modulus $E$ and $\nu$ in engineering. See \cite{kang1996mathematical}. The function $\mu=\mu(x)$ describe the material.\\

We assume that the Poisson ratio $\nu=0.5\lambda/(\lambda+\mu)$ is bounded away from $0.5$, i.e Poisson's ratio satisfies $0<\nu<0.5$. It is easy show that $\nu<0.5$. We recall the volumetric strain modulus is given by
\[
K=\frac{E}{3(1-2\nu)}>0,
\]
and equivalently we have that $1-2\nu>0$, then $\nu<0.5$. See \cite{kang1996mathematical}. We also assume that $\nu=\nu(x)$ has mild variation in $D$.\\

We introduce the heterogeneous function $E=E(x)$ that represent the Young's modulus and we get
\[
\mu(x)=\frac{1}{2}\frac{E(x)}{1+\nu(x)}=\tilde{\mu}(x)E(x),
\]
and
\[
\lambda(x)=\frac{1}{2}\frac{E(x)\nu(x)}{(1+\nu(x))(1-2\nu(x))}=\tilde{\lambda}(x)E(x).
\]
Here $E(x)$ can be calculated by dividing the stress tensor by the strain tensor in the elastic linear portion $\tau /\epsilon$. We introduced $\tilde{\lambda}=\nu/2(1+\nu)(1-2\nu)$ and $\tilde{\mu}=1/2(1+\nu)$. We also denote
\[
\tilde{\tau}(u)=2\tilde{\mu}\epsilon(u)+\tilde{\lambda}\dive (u)I_{d\times d}.
\]
For the general theory of mathematical elasticity, and the particular functions $\tau,\mu,E,\epsilon$ and $\lambda$ see e.g., \cite{MR936420,kang1996mathematical,MR0075755}.

\subsection{Weak Formulation}

Given a vector field $f$, we consider the Dirichlet problem
\begin{equation} \label{eq:1}
-\dive (\tau(u))=f, \quad \mbox{in }D, 
\end{equation} 
with $u=g$ on $\partial D$. A weak formulation of the equation \eqref{eq:1} goes as follows. First we multiply the equation \eqref{eq:1} by a test (vector) function $v\in H_0^1(D)^d$, we integrate over the domain $D$ afterwards so we get
\begin{equation}\label{eq:2}
\int_{D}-\dive(\tau(u))v=\int_{D}fv, \quad \mbox{for all }v \in H_0^1(D)^d.
\end{equation}
Using Green's formula, we get
\begin{equation}\label{eq:3}
\int_{D}\tau(u)\cdot \nabla v=\int_{D}fv+\int_{\partial D}\tau (u)\cdot nv, \quad \mbox{for all }v \in H_0^1(D)^d,
\end{equation}
or
\begin{equation}\label{eq:4}
\int_{D}(2\mu\epsilon(u)+\lambda \dive (u)I_{d\times d})\cdot \nabla v=\int_{D}fv+\int_{\partial D}\tau (u)\cdot nv, \quad \mbox{for all }v \in H_0^1(D)^d.
\end{equation}
Using that $\int_{\partial D}\tau (u)\cdot nv=0$, the equation \eqref{eq:4} is reduced 
\begin{equation}\label{eq:5}
\int_{D}(2\mu\epsilon(u)+\lambda \dive (u)I_{d\times d})\cdot \nabla v=\int_{D}fv, \quad \mbox{for all }v \in H_0^1(D)^d.
\end{equation}
Now, we rewrite the equation \eqref{eq:5} as
\begin{equation}\label{eq:6}
\int_{D}(2\mu\epsilon(u)\cdot \epsilon(v)+\lambda\dive (u)\dive (v)=\int_{D}fv, \quad \mbox{for all }v \in H_0^1(D)^d,
\end{equation}
where $\epsilon(u)\cdot \epsilon(v)=\sum_{i,j=1}^{d}\epsilon_{ij}(u)\epsilon_{ji}(v)$.  The equation \eqref{eq:6} is the weak formulation for the problem \eqref{eq:1}.

\subsection{Korn Inequalities}\label{Sec:B.5.2}

We have the analogous inequalities to Poincar\'e and Friedrichs inequalities. First, we introduce the quotient space $H=H^1(D)^d/{\cal RB}$, see Section \ref{sec:problem setting}, which is defined as a space of equivalence classes, i.e., two vectors in $H^1(D)^d$ are equivalent if they differ by a rigid body motion. We get the following result, known as Korn inequalities for the strain tensor.

\begin{lemma}
Let $D$ be a bounded Lipschitz domain. Then
\begin{equation}\label{eq:15}
|u|_{H^1(D)^d}^2\leq 2\int_D|\epsilon(u)|^2=2\sum_{i,j=1}^{d}\int_D|\epsilon_{ij}(u)|^2, \mbox{ for all }u\in H_0^1(D)^d.
\end{equation}
There exists a constant $C$, depending only on $D$, such that
\begin{equation}\label{eq:16}
\|u\|_{H^1(D)^d}^2\leq C\int_D|\epsilon(u)|^2,\mbox{ for all  }u\in H.
\end{equation}
\end{lemma}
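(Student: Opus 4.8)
The plan is to treat the two inequalities separately, since they are of very different depth. Inequality \eqref{eq:15} is the elementary \emph{first Korn inequality}, and I would obtain it by a direct integration by parts for smooth compactly supported fields, followed by density. Inequality \eqref{eq:16} is the \emph{second Korn inequality} on the quotient $H=H^1(D)^d/{\cal RB}$; for it I would first establish the ``full'' Korn inequality $\|u\|_{H^1(D)^d}^2\le C(\|u\|_{L^2(D)^d}^2+\int_D|\epsilon(u)|^2)$ and then remove the $L^2$ term by a compactness argument, exactly as Theorem \ref{TheoremA.3} removes it in the scalar Poincar\'e setting.

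For \eqref{eq:15}: let $u\in C_0^\infty(D)^d$. Expanding $|\epsilon(u)|^2=\tfrac14\sum_{i,j}(\partial_i u_j+\partial_j u_i)^2$ one finds
\[
\int_D|\epsilon(u)|^2=\tfrac12|u|_{H^1(D)^d}^2+\tfrac12\sum_{i,j=1}^d\int_D\partial_i u_j\,\partial_j u_i .
\]
Since $u$ has compact support, integrating by parts twice gives $\int_D\partial_i u_j\,\partial_j u_i=\int_D\partial_i u_i\,\partial_j u_j$, so the second sum equals $\int_D(\dive u)^2\ge 0$; hence $|u|_{H^1(D)^d}^2\le 2\int_D|\epsilon(u)|^2$, and density of $C_0^\infty(D)^d$ in $H_0^1(D)^d$ finishes it.

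For \eqref{eq:16}: I would start from the distributional identity $\partial_k\partial_\ell u_m=\partial_k\epsilon_{\ell m}(u)+\partial_\ell\epsilon_{km}(u)-\partial_m\epsilon_{k\ell}(u)$, valid for any $u\in H^1(D)^d$. Because $\epsilon_{ij}(u)\in L^2(D)$, each $\partial_k\partial_\ell u_m$ lies in $H^{-1}(D)$; since also $\partial_\ell u_m\in L^2(D)\subset H^{-1}(D)$, a classical regularity lemma (a distribution all of whose first derivatives lie in $H^{-1}$ belongs to $L^2$, with the corresponding norm estimate; see \cite{Neecas-book2012,toselli2005domain}) yields the full Korn inequality $\|u\|_{H^1(D)^d}^2\le C(\|u\|_{L^2(D)^d}^2+\int_D|\epsilon(u)|^2)$. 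To pass to the quotient I would argue by contradiction: if \eqref{eq:16} failed there would be $u_n\in H$ with $\|u_n\|_{H^1(D)^d}=1$ and $\int_D|\epsilon(u_n)|^2\to0$; by reflexivity and Rellich's theorem a subsequence converges weakly in $H^1(D)^d$ and strongly in $L^2(D)^d$ to some $u$, weak lower semicontinuity of the $L^2$-norm of $\epsilon$ gives $\epsilon(u)=0$, hence $u\in{\cal RB}$, so $u$ represents $0$ in $H$ and $u_n\to0$ in $L^2(D)^d$; then the full Korn inequality forces $\|u_n\|_{H^1(D)^d}\to0$, contradicting $\|u_n\|_{H^1(D)^d}=1$.

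The routine part is the proof of \eqref{eq:15}. The genuine obstacles for \eqref{eq:16} are the rigidity fact $\epsilon(v)=0\Rightarrow v\in{\cal RB}$ on a connected Lipschitz domain — which I would isolate as an auxiliary lemma — and the $H^{-1}$ regularity lemma used to obtain the full Korn inequality, which I would cite from \cite{Neecas-book2012,toselli2005domain}. An alternative that avoids the $H^{-1}$ lemma is to prove the full Korn inequality itself by a compactness/contradiction argument, but this still needs the rigidity fact, so the characterization of $\ker\epsilon$ as ${\cal RB}$ is the real crux.
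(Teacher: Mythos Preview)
The paper does not actually prove this lemma: it is stated in Section~\ref{Sec:B.5.2} as a known result (``Korn inequalities for the strain tensor'') with no proof and no explicit reference, in the same spirit as the Poincar\'e and Friedrichs inequalities are recorded in the preceding section. So there is no ``paper's own proof'' to compare against.

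That said, your proposal is correct and is precisely the standard route. The computation for \eqref{eq:15} is the classical one-line proof of the first Korn inequality; the identity $\sum_{i,j}\int_D\partial_iu_j\,\partial_ju_i=\int_D(\dive u)^2$ for compactly supported smooth fields is exactly what makes the constant $2$ sharp. For \eqref{eq:16}, your two-step strategy --- the distributional identity $\partial_k\partial_\ell u_m=\partial_k\epsilon_{\ell m}+\partial_\ell\epsilon_{km}-\partial_m\epsilon_{k\ell}$ combined with the Ne\v{c}as/Lions $H^{-1}$ lemma to get the full Korn inequality, followed by a Rellich compactness argument to pass to the quotient $H=H^1(D)^d/{\cal RB}$ --- is the textbook proof and would be accepted without reservation. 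One small point worth making explicit in the contradiction step: when you pick $u_n\in H$ with $\|u_n\|_{H^1(D)^d}=1$, you are implicitly choosing representatives (e.g.\ the ones $L^2$-orthogonal to ${\cal RB}$, or minimizers of $\|\cdot\|_{H^1}$ over the coset); stating this choice makes the passage ``$u\in{\cal RB}$ hence $u_n\to0$ in $L^2$'' clean.
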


If we assume that $u=0$ in the problem \eqref{eq:1} we get the important results in relation to the Dirichlet problems.

\begin{theorem}{\bf (Dirichlet problem)}\label{dirichletprob}
Let $f\in H^{-1}(D)^d$. Then, there exists a unique $u\in H_0^1(D)^d$, satisfying \eqref{eq:6} and constants, such that
\[
\|u\|_{H^1(D)^d}\leq C_1\|f\|_{H^{-1}(D)^d}+\|g\|_{H^{1/2}(\partial D)^d},\quad \|u\|_{\cal A}\leq C_2 \|f\|_{H^{-1}(D)^d}.
\]
\end{theorem}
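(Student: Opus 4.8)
The plan is to apply the Lax--Milgram theorem (Theorem~\ref{Lax-Milgram}) on the Hilbert space $H_0^1(D)^d$, using the Korn inequalities \eqref{eq:15}--\eqref{eq:16} and the Friedrichs inequality (Lemma~\ref{LemmaA.4}) to get coercivity, and reducing the non-homogeneous datum $g$ to the homogeneous case by a standard bounded extension of the trace (the companion to Theorem~\ref{TheoremA.2} from the trace theory of Section~\ref{Sec:A.5}).

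First I would check the hypotheses of Lax--Milgram for the bilinear form ${\cal A}$ of \eqref{eq:6}. Continuity is routine: since $\mu=\widetilde\mu E$ and $\lambda=\widetilde\lambda E$ are bounded above on $D$ (the Poisson ratio stays away from $1/2$, so $\widetilde\lambda$ is bounded, and $E$ is bounded), Cauchy--Schwarz gives $|{\cal A}(u,v)|\le\alpha\|u\|_{H^1(D)^d}\|v\|_{H^1(D)^d}$ with $\alpha$ depending only on $\sup_D\mu$ and $\sup_D\lambda$. For coercivity on $H_0^1(D)^d$ I would use $\mu\ge\mu_{\min}>0$, drop the nonnegative term $\lambda(\dive u)^2$, apply the Korn inequality \eqref{eq:15}, $|u|_{H^1(D)^d}^2\le 2\int_D|\epsilon(u)|^2$, and then the Friedrichs inequality to upgrade the seminorm to the full $H^1$ norm, obtaining $\beta\|u\|_{H^1(D)^d}^2\le{\cal A}(u,u)$ for all $u\in H_0^1(D)^d$, with $\beta$ depending on $\mu_{\min}$, $D$.

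Next, for $g=0$, Theorem~\ref{Lax-Milgram} applied with ${\cal F}(v)=\langle f,v\rangle$ (bounded on $H_0^1(D)^d$ with norm $\le\|f\|_{H^{-1}(D)^d}$) yields a unique $u\in H_0^1(D)^d$ with ${\cal A}(u,v)={\cal F}(v)$; testing with $v=u$ gives $\beta\|u\|_{H^1(D)^d}^2\le{\cal A}(u,u)={\cal F}(u)\le\|f\|_{H^{-1}(D)^d}\|u\|_{H^1(D)^d}$, hence $\|u\|_{H^1(D)^d}\le\beta^{-1}\|f\|_{H^{-1}(D)^d}$, and since $\|u\|_{\cal A}^2={\cal A}(u,u)\le\|f\|_{H^{-1}(D)^d}\beta^{-1/2}\|u\|_{\cal A}$ also $\|u\|_{\cal A}\le\beta^{-1/2}\|f\|_{H^{-1}(D)^d}$. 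For general $g\in H^{1/2}(\partial D)^d$, I would pick a lifting $w\in H^1(D)^d$ with $w=g$ on $\partial D$ and $\|w\|_{H^1(D)^d}\le C\|g\|_{H^{1/2}(\partial D)^d}$, and solve for $\widetilde u=u-w\in H_0^1(D)^d$ the problem ${\cal A}(\widetilde u,v)={\cal F}(v)-{\cal A}(w,v)$, whose right-hand side is bounded by $(\|f\|_{H^{-1}(D)^d}+\alpha\|w\|_{H^1(D)^d})\|v\|_{H^1(D)^d}$; Lax--Milgram again gives $\widetilde u$ uniquely, and the triangle inequality $\|u\|_{H^1(D)^d}\le\|\widetilde u\|_{H^1(D)^d}+\|w\|_{H^1(D)^d}$ together with the a priori bound on $\widetilde u$ and the lifting estimate gives the stated inequality with $C_1$ depending on $\alpha,\beta,C$. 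Uniqueness is immediate from coercivity, since the difference of two solutions lies in $H_0^1(D)^d$ and is annihilated by ${\cal A}(\cdot,\cdot)$.

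The main obstacle is the coercivity step, i.e.\ the Korn inequality: bounding $|u|_{H^1(D)^d}$ by $\int_D|\epsilon(u)|^2$ is the only nontrivial ingredient, and one must take care that the constant remains finite, which is exactly where the hypothesis that the Poisson ratio is bounded away from $1/2$ (so that $\widetilde\lambda$ and hence $\lambda$ do not blow up and $\mu\ge\mu_{\min}>0$) is used. The remaining pieces --- continuity, the two duality estimates, and the trace lifting --- are standard.
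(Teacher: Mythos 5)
Your argument is correct, but there is nothing in the paper to compare it against: this theorem sits in the appendix, which the paper explicitly presents ``mainly without proofs,'' and the result is simply quoted with a pointer to \cite{toselli2005domain}. Your proposal supplies the standard missing argument, and it is the right one: continuity of ${\cal A}$ from the upper bounds on $\widetilde\mu E$ and $\widetilde\lambda E$ (here is where $\nu$ bounded away from $1/2$ keeps $\widetilde\lambda$ finite), coercivity on $H_0^1(D)^d$ by discarding the nonnegative $\widetilde\lambda E(\dive u)^2$ term and chaining the first Korn inequality \eqref{eq:15} with Friedrichs (Lemma \ref{LemmaA.4}), then Lax--Milgram, the test $v=u$ for the two a priori bounds, and a bounded trace lifting to absorb $g$. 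Two small points worth making explicit if you write this up: (i) the statement as printed is internally inconsistent --- it asks for $u\in H_0^1(D)^d$ yet carries a $\|g\|_{H^{1/2}(\partial D)^d}$ term, and the second estimate $\|u\|_{\cal A}\le C_2\|f\|_{H^{-1}(D)^d}$ is only valid for $g=0$; your split into the homogeneous case plus a lifting is exactly how to read it charitably, but you should say which estimate holds in which case; (ii) discarding the divergence term requires $\widetilde\lambda\ge 0$, which follows from $0<\nu<1/2$ as defined in Section \ref{LinearElasticity} and deserves one line. Neither point is a gap in the mathematics, only in the bookkeeping.
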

Remember that the bilinear form ${\cal A}(\cdot,\cdot)$ provides a scalar product in $H$, see \cite{toselli2005domain}, we denote the norm and $\|u\|_{\cal A}$ and have the equivalence
\[
\beta \|u\|_{H}^2\leq\|u\|_{\cal A}^{2}\leq \alpha\|u\|_{H}^{2}, \quad\mbox{ for all }u\in H,
\]
where $H$ is a Hilbert space.\\
 
Analogously, for the Neumann problem
\begin{equation}\label{eq:17}
-\dive (\tau(u))=f \ \mbox{in }D,
\end{equation}
with $\tau(u)\cdot n=g$ on $\partial D$. If a solution $u$ of problem \eqref{eq:17} exists, then it is defined only up to a rigid body mode in ${\cal RB}$. Moreover, the following compatibility condition must hold
\begin{equation}\label{eq:18}
\int_Df\cdot r +\int_{\partial D}g\cdot r=0, \mbox{ for all } r\in {\cal RB}.
\end{equation}
We have the bilinear form
\[
(u,v)_H=\int_D\epsilon(u)\cdot \epsilon(v), 
\]
defines a scalar product in $H$. We use the equation \label{eq:16}, the corresponding induced norm $\|u\|_H$ is equivalent to
\[
\inf_{r\in {\cal RB}}\|u-r\|_{H^1(D)^d}.
\] 
We assume that $f$ belongs to the dual space of $H^1(D)^d$ and $g$ to $H^{-1/2}(\partial D)^d$. We consider these assumptions and the condition  \eqref{eq:18}, the expression
\[
\langle f,u\rangle=\int_Df\cdot u +\int_{\partial D}g\cdot u, \mbox{ for all } u\in H,
\]
defines a linear functional of $H'$. Using the same  equation \eqref{eq:6} we have the next result.
 
\begin{theorem}{\bf (Neumann problem)}\label{neumannpro}
Assume that $f$ and $g$ satisfy the regularity assumptions given above and that \eqref{eq:18} holds. Then, there exists a unique $u\in H$, satisfying \eqref{eq:6} and constants, such that
\[
\|u\|_{H}\leq C_1\|F\|_{H'},\quad \|u\|_{\cal A}\leq C_2 \|F\|_{H'},
\]
with 
\[
\|F\|_{H'}\leq C\left(\|f\|_{H^{-1}(D)^d}+\|g\|_{H^{-1/2}(\partial D)^d}\right).
\]
\end{theorem}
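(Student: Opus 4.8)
The plan is to obtain the result as a direct application of the Lax--Milgram theorem (Theorem \ref{Lax-Milgram}) on the quotient Hilbert space $H=H^1(D)^d/{\cal RB}$, equipped with the inner product $(u,v)_H=\int_D\epsilon(u)\cdot\epsilon(v)$. First I would verify that $H$ with this inner product is indeed a Hilbert space: by the second Korn inequality \eqref{eq:16} the induced norm $\|u\|_H$ is equivalent to the quotient norm $\inf_{r\in{\cal RB}}\|u-r\|_{H^1(D)^d}$, which is complete because ${\cal RB}$ is a finite-dimensional, hence closed, subspace of $H^1(D)^d$. I would also record that the bilinear form of \eqref{eq:6} descends to $H\times H$: every $r\in{\cal RB}$ satisfies $\epsilon(r)=0$ and $\dive r=0$, so replacing $u$ by $u+r$ or $v$ by $v+r$ does not change ${\cal A}(u,v)$; and that the linear functional $\langle F,v\rangle=\int_D f\cdot v+\int_{\partial D}g\cdot v$ is well defined on $H$ precisely because of the compatibility condition \eqref{eq:18}.

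Next I would check the two hypotheses of Lax--Milgram for ${\cal A}$ on $H$. Continuity follows from Cauchy--Schwarz together with the boundedness of the coefficients $\widetilde{\mu}=\widetilde{\mu}(x)$ and $\widetilde{\lambda}=\widetilde{\lambda}(x)$ (controlled by the assumed mild variation of the Poisson ratio $\nu$ away from $1/2$): $|{\cal A}(u,v)|\le C\|\epsilon(u)\|_{L^2}\|\epsilon(v)\|_{L^2}=C\|u\|_H\|v\|_H$. Coercivity is the key point: since $\widetilde{\lambda}\ge 0$ and $\widetilde{\mu}$ is bounded below by a positive constant $\widetilde{\mu}_{\min}$, one has ${\cal A}(u,u)=\int_D 2\widetilde{\mu}|\epsilon(u)|^2+\widetilde{\lambda}|\dive u|^2\ge 2\widetilde{\mu}_{\min}\int_D|\epsilon(u)|^2=2\widetilde{\mu}_{\min}\|u\|_H^2$. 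Thus ${\cal A}$ is continuous and $H$-elliptic, which also gives the norm equivalence $\beta\|u\|_H^2\le\|u\|_{\cal A}^2\le\alpha\|u\|_H^2$ with $\alpha,\beta$ coming from these two estimates.

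For the functional I would estimate $|\langle F,v\rangle|\le\|f\|_{H^{-1}(D)^d}\|v\|_{H^1(D)^d}+\|g\|_{H^{-1/2}(\partial D)^d}\|v\|_{H^{1/2}(\partial D)^d}$, then bound $\|v\|_{H^1(D)^d}$ and, via the trace theorem (Theorem \ref{TheoremA.2}), $\|v\|_{H^{1/2}(\partial D)^d}$ by a constant multiple of $\|v\|_H$ — again using Korn's inequality \eqref{eq:16} for the representative of minimal norm. This yields $\|F\|_{H'}\le C\left(\|f\|_{H^{-1}(D)^d}+\|g\|_{H^{-1/2}(\partial D)^d}\right)$. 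With continuity, $H$-ellipticity and boundedness of $F$ established, Lax--Milgram produces a unique $u\in H$ solving ${\cal A}(u,v)=\langle F,v\rangle$ for all $v\in H$, that is, the weak formulation \eqref{eq:6} with Neumann data, together with $\|u\|_H\le\beta^{-1}\|F\|_{H'}$; the bound $\|u\|_{\cal A}\le C_2\|F\|_{H'}$ then follows from the norm equivalence.

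The main obstacle, and really the only non-routine point, is the bookkeeping around the quotient space: one must check carefully that both ${\cal A}$ and $F$ genuinely pass to the quotient (the former from $\epsilon(r)=\dive r=0$ on ${\cal RB}$, the latter from \eqref{eq:18}), and that coercivity holds on $H$ rather than on all of $H^1(D)^d$, where it fails on rigid body motions. All of these facts rest on the Korn inequalities of the preceding lemma, so the argument amounts to a careful combination of those inequalities, the trace theorem, and Lax--Milgram.
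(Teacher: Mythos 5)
Your proposal is correct and follows exactly the route the paper intends: the paper states this theorem without proof (deferring to the references), but the preceding paragraphs assemble precisely the ingredients you use — the quotient space $H=H^1(D)^d/{\cal RB}$ with inner product $(u,v)_H=\int_D\epsilon(u)\cdot\epsilon(v)$, the norm equivalence from Korn's inequality \eqref{eq:16}, the compatibility condition \eqref{eq:18} making $F$ well defined on $H$, and the equivalence $\beta\|u\|_H^2\le\|u\|_{\cal A}^2\le\alpha\|u\|_H^2$ — so that Lax--Milgram on the quotient is the intended argument. Your additional care about why ${\cal A}$ and $F$ descend to the quotient and why coercivity holds there (but not on all of $H^1(D)^d$) is a welcome completion of what the paper leaves implicit.
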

Note that the solution of a Neumann problem is defined only up to a rigid body motion. That is if $u$ is solution of a Neumann problem, then $u+\zeta$ also is solution for any $\zeta \in {\cal RB}$.\\

For the case non-homogeneous and mixed boundary conditions can also be considered, see e.g. \cite{kang1996mathematical,slaughter2002linearized,MR0075755}.



\chapter{Finite Element Methods}\label{AppendixC}

\minitoc

In this appendix we introduce an important numerical method in the development of partial differential equations, specifically,  the symmetric elliptic problems.  We review the finite element approximations which is mentioned in classical general references  \cite{ciarlet1978finite,MR2477579,johnson2012numerical,Solin-book2005,toselli2005domain} and specific references with examples in $\R^1$ and $\R^2$ \cite{Galvis-book2009,Galvis-book2011}.\\

The finite element method provides a formalism for discrete (finite) approximations of the solutions of differential equations, in general boundary data problems.

\section{Galerkin Formulation}\label{C.1section}

We consider the elliptic problem in strong form
\begin{equation} \label{StrongformApC}
-\dive(\kappa(x)\nabla u(x))=f(x),\ \mbox{  in }  D. 
\end{equation}
We deduce the weak formulation of the problem \eqref{StrongformApC}, which consists in: find $u\in V=H_{0}^{1}(D)$ such that 
\begin{equation}
{\cal A}(u,v)={\cal F}(v),\mbox{ for all }v\in V,\label{Continuousproblem}
\end{equation}
where the space $V$, the bilinear form ${\cal A}(u,v)$
and the linear form ${\cal F}(v)$ satisfy the assumptions of the Lax-Milgram theorem \ref{Lax-Milgram}. Then, the Galerkin method is used to approximate the solution of an problem in its weak form. In this way, we consider the finite dimensional subspace $V_{h}\subset V$, and associate the discrete problem, which consists in finding $u_{h}\in V_{h}$ such that
\begin{equation}\label{gral Galerkin}
{\cal A}(u_{h},v_{h})={\cal F}(v_h),\mbox{ for all }v_{h}\in V_{h}.
\end{equation}
Applying the Lax-Milgram theorem \ref{Lax-Milgram}, it follows that the discrete problem \eqref{gral Galerkin} has an unique solution $u_{h}$ which is called discret solution.\\

In this order, to apply the Galerkin approximation we need to construct finite dimensional subspaces of the spaces $H_{0}^{1}(D), H^{1}(D)$. The finite element method, in its simplest form, is a specific process of constructing subspaces $V_{h}$, which are called finite element spaces. The building is characterized in detail in \cite{ciarlet1978finite}. We have the following result.

\begin{lemma}
\label{Lemma uniqueness}{\bf (Solubility unique)}
The problem \eqref{gral Galerkin} has an unique solution $u_{h}\in V_{h}$.
\end{lemma}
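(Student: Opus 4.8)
The plan is to deduce this lemma as a direct corollary of the Lax--Milgram theorem (Theorem \ref{Lax-Milgram}), applied now on the finite-dimensional space $V_h$ instead of on $V$. First I would observe that $V_h$ is a closed linear subspace of the Hilbert space $V=H_0^1(D)$: any finite-dimensional subspace of a normed space is closed, so $V_h$, equipped with the inner product and norm inherited from $V$, is itself a Hilbert space.

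Next I would check that the hypotheses of Theorem \ref{Lax-Milgram} pass to the restrictions. By assumption $\mathcal{A}:V\times V\to\R$ is bilinear, continuous (there is $\alpha>0$ with $|\mathcal{A}(u,v)|\le\alpha\|u\|_{H}\|v\|_{H}$) and coercive (there is $\beta>0$ with $\beta\|u\|_{H}^2\le\mathcal{A}(u,u)$); restricting to $u,v\in V_h\subset V$ these two inequalities hold verbatim, so $\mathcal{A}|_{V_h\times V_h}$ is a continuous coercive bilinear form on $V_h$. Likewise $\mathcal{F}$ is a bounded linear functional on $V$, so its restriction to $V_h$ is a bounded linear functional on $V_h$. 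Applying Theorem \ref{Lax-Milgram} with the Hilbert space taken to be $V_h$ then yields a unique $u_h\in V_h$ satisfying $\mathcal{A}(u_h,v_h)=\mathcal{F}(v_h)$ for all $v_h\in V_h$, which is exactly \eqref{gral Galerkin}.

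Alternatively---and this is the version I would probably spell out in the text, since it makes the role of $V_h$ concrete---one can argue by linear algebra. Fix a basis $\{\varphi_i\}_{i=1}^N$ of $V_h$ and write $u_h=\sum_{j=1}^N x_j\varphi_j$; then, by bilinearity and by testing against each $\varphi_i$, problem \eqref{gral Galerkin} is equivalent to the linear system $\mathbf{K}\mathbf{x}=\mathbf{b}$ with $K_{ij}=\mathcal{A}(\varphi_j,\varphi_i)$ and $b_i=\mathcal{F}(\varphi_i)$. Coercivity gives, for $w=\sum_i y_i\varphi_i\neq 0$, that $\mathbf{y}^{T}\mathbf{K}\mathbf{y}=\mathcal{A}(w,w)\ge\beta\|w\|_{H}^2>0$, so $\mathbf{K}$ is positive definite, hence invertible, and the system has a unique solution.

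The argument is essentially immediate, so there is no real obstacle; the only point that deserves a word of care is that $V_h$ must be a genuine \emph{closed} subspace so that it is a Hilbert space in its own right (automatic here since $\dim V_h<\infty$), and that it is precisely the coercivity of $\mathcal{A}$ on $V$ that forces the stiffness matrix $\mathbf{K}$ to be nonsingular---no hypothesis on $V_h$ beyond $V_h\subset V$ is needed. The main step is simply invoking Theorem \ref{Lax-Milgram} with $H=V_h$.
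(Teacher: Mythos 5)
Your proposal is correct and follows essentially the same route as the paper: the paper's proof simply notes that $\mathcal{A}$ restricted to $V_h\times V_h$ remains bilinear, bounded and $V_h$-elliptic and that $\mathcal{F}|_{V_h}\in V_h'$, then invokes the Lax--Milgram theorem on $V_h$. Your alternative linear-algebra argument via positive definiteness of the stiffness matrix is also exactly what the paper develops immediately afterwards (Lemma \ref{lemmadefpositive} and its corollary on invertibility), so nothing is missing.
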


\begin{proof}
The form ${\cal A}(\cdot,\cdot)$, restricted to $V_{h}\times V_{h}$ results bilinear, bounded and  $V_{h}$-elliptic. The form ${\cal F}(v_{h})$ is linear and belongs $V'_h$ (which is dual space of $V_h$). Then the assumptions of the Lax-Milgram theorem \ref{Lax-Milgram} are satisfied, thus there exists an unique solution.
\end{proof}

The solution $u_{h}\in V_{h}$ to the discrete problem \eqref{gral Galerkin} can be found numerically, as the subspace $V_{h}$ contains a finite basis $\left\{ \phi_{i}\right\} _{j=1}^{N_{h}}$, where $N_{h}$ is the dimension of $V_{h}$. So the solution $u_{h}$ can be written as a linear combination  of the basis functions with unknown coefficients
\begin{equation}
u_{h}=\sum_{j=1}^{N_{h}}x_{j}\phi_{j}.\label{BasisComb}
\end{equation}
By substituting  \eqref{BasisComb} in problem \eqref{gral Galerkin} we find that
\[
{\cal A}\left(\sum_{j=1}^{N_{h}}x_{j}\phi_{j},v_{h}\right)=\sum_{j=1}^{N_h}x_j{\cal A}(\phi_j,v_h)={\cal F}(v_h),\, \mbox{ for all }v_h\in V_h.
\]
By the linearity of ${\cal A}$ and ${\cal F}$ we need to verify equation \eqref{gral Galerkin} only for the test functions $v_h=\phi_i$, for $i=1,\dots, N_h$, in the basis \eqref{BasisComb}. This is equivalent to the formulation: find $u_h=\sum_{j=1}^{N_h}x_j\phi_j$ such that
\begin{equation}\label{AlgebraicSystem}
\sum_{i=1}^{N_h}x_i{\cal A}(\phi_i,\phi_j)={\cal F}(\phi_j),\quad j=1,\dots,N_h.
\end{equation}
Now, we define the $N_h\times N_h$ stiffness matrix 
\[
\mathbf{A}= [a_{ij}],\,\quad a_{ij}={\cal A}(v_{i},v_{j}),\, i,j=1,\dots,N_h, 
\]
the vector
\[
\mathbf{b}=[ b_{1}\dots ,b_{N_h}]^T\in \R^{N_h},\quad b_{j}={\cal F}(v_{j}),\, j=1,\dots,N_h,
\]
and unknown coefficient vector
\[
\mathbf{u}_{h}=[ x_{1},\dots ,x_{N_h}]^T\in \R^{N_h}.
\]
Then algebraic system \eqref{AlgebraicSystem} can be written  of matrix form, which consists of finding $\mathbf{u}_{h}\in\R^{N_{h}}$ such that
\begin{equation}\label{matrixproblem}
\mathbf{A}\mathbf{u}_{h}=\mathbf{b}.
\end{equation}
We consider a specific condition for the matrix $\mathbf{A}$ that is the invertibility. First we introduce positive definiteness of the $\mathbf{A}$ in the following lemma.

\begin{lemma}\label{lemmadefpositive}{\bf (Positive definiteness)} 
Let $V_{h}$, $\dim(V_{h})=N_{h}<\infty$ be a Hilbert space and ${\cal A}(\cdot,\cdot):V_h\times V_h\to\R$ a bilinear $V_h$-elliptic form . Then the stiffness matrix $\mathbf{A}$ of the  problem \eqref{matrixproblem} is positive definite.
\end{lemma}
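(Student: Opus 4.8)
The plan is to test the quadratic form $\mathbf{y}^{T}\mathbf{A}\mathbf{y}$ against an arbitrary nonzero vector, translate it through the basis $\{\phi_{j}\}_{j=1}^{N_{h}}$ of $V_{h}$ into the bilinear form ${\cal A}$ evaluated at a nonzero element of $V_{h}$, and then invoke $V_{h}$-ellipticity. So this is a short unwinding of definitions; the substantive input is the ellipticity hypothesis, which is exactly the one inherited from the Lax--Milgram setting in Theorem \ref{Lax-Milgram}.

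First I would fix $\mathbf{y}=(y_{1},\dots,y_{N_{h}})^{T}\in\R^{N_{h}}$ with $\mathbf{y}\neq 0$ and set $v_{h}=\sum_{j=1}^{N_{h}}y_{j}\phi_{j}\in V_{h}$. Because $\{\phi_{j}\}_{j=1}^{N_{h}}$ is a basis of $V_{h}$, the coordinate map $\mathbf{y}\mapsto v_{h}$ is injective, so $\mathbf{y}\neq 0$ forces $v_{h}\neq 0$. This injectivity is the only place the basis property is used, and it is what prevents degeneracy.

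Next, using the definition $a_{ij}={\cal A}(\phi_{i},\phi_{j})$ of the stiffness matrix together with the bilinearity of ${\cal A}$, I would compute
\[
\mathbf{y}^{T}\mathbf{A}\mathbf{y}
=\sum_{i=1}^{N_{h}}\sum_{j=1}^{N_{h}}y_{i}\,a_{ij}\,y_{j}
=\sum_{i=1}^{N_{h}}\sum_{j=1}^{N_{h}}y_{i}\,{\cal A}(\phi_{i},\phi_{j})\,y_{j}
={\cal A}\Big(\textstyle\sum_{i}y_{i}\phi_{i},\ \sum_{j}y_{j}\phi_{j}\Big)
={\cal A}(v_{h},v_{h}).
\]
By the $V_{h}$-ellipticity of ${\cal A}$ there is a constant $\beta>0$ with $\beta\|w\|_{V_{h}}^{2}\le{\cal A}(w,w)$ for all $w\in V_{h}$; applying this with $w=v_{h}$ and using $v_{h}\neq 0$ gives $\mathbf{y}^{T}\mathbf{A}\mathbf{y}={\cal A}(v_{h},v_{h})\ge\beta\|v_{h}\|_{V_{h}}^{2}>0$. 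Since $\mathbf{y}\neq 0$ was arbitrary, $\mathbf{A}$ is positive definite.

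I do not expect any real obstacle: the only points requiring care are the bookkeeping of the double sum collapsing to ${\cal A}(v_{h},v_{h})$ via bilinearity, and the remark that injectivity of the coordinate isomorphism is what yields $v_{h}\neq 0$. As an immediate consequence one notes that a positive definite matrix is invertible, which is the reason the lemma is stated here, so that the linear system \eqref{matrixproblem} is uniquely solvable in accordance with Lemma \ref{Lemma uniqueness}.
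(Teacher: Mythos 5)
Your proof is correct and follows essentially the same route as the paper: expand $\mathbf{y}^{T}\mathbf{A}\mathbf{y}$ via bilinearity into ${\cal A}(v_h,v_h)$ and apply $V_h$-ellipticity. The only difference is that you explicitly justify $v_h\neq 0$ through the injectivity of the coordinate map, a detail the paper leaves implicit.
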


\begin{proof}
We show that $\mathbf{u}_{h}^{T}\mathbf{A}\mathbf{u}_{h}>0$ for all $\mathbf{u}_{h}\neq 0$. Then we take an arbitrary $\hat{\mathbf{u}}_{h}=[\hat{x}_1,\dots,\hat{x}_{N_h}]$ and define the vector
\[
\hat{v}=\sum_{j=1}^{N_h}\hat{x}_jv_j, 
\]
where $\left\{v_1,\dots,v_{N_h}\right\}$ is some basis in $V_h$. By the $V_h$-ellipticity of the form ${\cal A}(\cdot,\cdot)$ it is
\begin{eqnarray*}
\hat{\mathbf{u}}_{h}^{T}\mathbf{A}\hat{\mathbf{u}}_{h} & = &\sum_{j=1}^{N_h}\sum_{i=1}^{N_h}\hat{x}_ja_{ji}\hat{x}_i\\
& = &{\cal A}\left(\sum_{j=1}^{N_h}\hat{x}_jv_j,\sum_{i=1}^{N_h}\hat{x}_iv_i\right)\\
& = & {\cal A}(\hat{v},\hat{v})\geq \beta\|\hat{v}\|_V^2>0,
\end{eqnarray*}
which was to be shown.
\end{proof}
\begin{corollary}{\bf (Invertibility $\mathbf{A}$)}
The stiffness matrix $\mathbf{A}$ of the discrete problem
\eqref{matrixproblem} is nonsingular.
\end{corollary}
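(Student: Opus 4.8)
The plan is to deduce the statement directly from Lemma \ref{lemmadefpositive}. First I would observe that the hypotheses of that lemma are in force in our setting: the finite element space $V_h$ is a finite-dimensional subspace of the Hilbert space $V=H_0^1(D)$, hence itself a (finite-dimensional) Hilbert space, and the bilinear form ${\cal A}(\cdot,\cdot)$ is $V_h$-elliptic since it is $V$-elliptic (this is part of the Lax--Milgram assumptions, see Theorem \ref{Lax-Milgram}). Therefore Lemma \ref{lemmadefpositive} applies and the stiffness matrix $\mathbf{A}$ is positive definite, i.e. $\mathbf{u}_h^{T}\mathbf{A}\mathbf{u}_h>0$ for every $\mathbf{u}_h\neq 0$ in $\R^{N_h}$.

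Next I would turn positive definiteness into nonsingularity by a kernel argument. Suppose $\mathbf{u}_h\in\R^{N_h}$ satisfies $\mathbf{A}\mathbf{u}_h=\mathbf{0}$. Multiplying on the left by $\mathbf{u}_h^{T}$ gives $\mathbf{u}_h^{T}\mathbf{A}\mathbf{u}_h=0$. By the strict inequality in Lemma \ref{lemmadefpositive} this is impossible unless $\mathbf{u}_h=\mathbf{0}$. Hence the kernel of $\mathbf{A}$ is trivial, so the linear map represented by $\mathbf{A}$ is injective. Since $\mathbf{A}$ is a square $N_h\times N_h$ matrix, injectivity is equivalent to invertibility, so $\mathbf{A}$ is nonsingular; equivalently $\det\mathbf{A}\neq 0$, and the system \eqref{matrixproblem} has a unique solution $\mathbf{u}_h=\mathbf{A}^{-1}\mathbf{b}$, consistent with Lemma \ref{Lemma uniqueness}.

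There is essentially no serious obstacle here: the content of the corollary is entirely carried by Lemma \ref{lemmadefpositive}, and the only thing to be careful about is making explicit that ``positive definite'' (as established via the $V_h$-ellipticity of ${\cal A}$) already encodes strict positivity of the quadratic form on all nonzero vectors, which is exactly what rules out a nontrivial kernel. If desired, one could alternatively phrase the conclusion spectrally — a symmetric positive definite matrix has all eigenvalues strictly positive, hence $\det\mathbf{A}>0$ — but the kernel argument above is the most direct and requires no symmetry of $\mathbf{A}$.
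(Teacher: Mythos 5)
Your proof is correct: the paper itself gives no argument for this corollary (it merely cites a reference), and your deduction — positive definiteness from Lemma \ref{lemmadefpositive} via the $V_h$-ellipticity inherited from $V$, followed by the kernel argument that $\mathbf{A}\mathbf{u}_h=\mathbf{0}$ forces $\mathbf{u}_h^{T}\mathbf{A}\mathbf{u}_h=0$ and hence $\mathbf{u}_h=\mathbf{0}$ — is exactly the standard argument the cited source would supply. Your remark that the kernel argument needs no symmetry of $\mathbf{A}$ is a nice touch, since the paper never asserts symmetry of the bilinear form at this point.
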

\begin{proof}
See \cite{Solin-book2005}.
\end{proof}
Finally we conclude that the matrix system \eqref{matrixproblem} has an unique solution $\mathbf{u}_{h}$ that defines  a unique solution $u_{h}\in V_{h}$ of \eqref{gral Galerkin}.

\section{Orthogonality of the Error (Cea's Lemma)}

In this section present a brief introduction of the analysis of  the error of the solution. The error in particular has the next orthogonality property.

\begin{lemma}{\bf (Orthogonality of the error)}
Let $u\in V$ the exact solution of the continuous problem \eqref{Continuousproblem} and $u_{h}$ the discrete solution of the problem \eqref{gral Galerkin}. Then, the error $e_{h}=u-u_{h}$ satisfy
\begin{equation}\label{ortho-error}
{\cal A}\left(u-u_{h},v_{h}\right)=0,\,\mbox{ for all  }v_{h}\in V_{h}.
\end{equation}
\end{lemma}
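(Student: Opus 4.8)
The plan is to exploit the single structural fact that makes Galerkin methods work: the discrete space $V_h$ is a subspace of the continuous space $V$, so every discrete test function is admissible as a test function in the continuous problem. First I would observe that, by hypothesis, $V_h \subset V$; hence for an arbitrary but fixed $v_h \in V_h$ we may use $v_h$ itself as a test function in \eqref{Continuousproblem}, which yields ${\cal A}(u, v_h) = {\cal F}(v_h)$.

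Next I would write down the discrete equation \eqref{gral Galerkin} with the same test function $v_h$, giving ${\cal A}(u_h, v_h) = {\cal F}(v_h)$. Subtracting the two identities and using the bilinearity of ${\cal A}$ in its first argument, the right-hand sides cancel and I obtain
\[
{\cal A}(u - u_h, v_h) = {\cal A}(u, v_h) - {\cal A}(u_h, v_h) = {\cal F}(v_h) - {\cal F}(v_h) = 0.
\]
Since $v_h \in V_h$ was arbitrary, this is exactly \eqref{ortho-error} with $e_h = u - u_h$, which completes the argument.

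There is essentially no obstacle here: the proof is a two-line consequence of linearity together with the inclusion $V_h \subset V$. The only point worth stating explicitly — and the one place a careless write-up could slip — is that the continuous variational identity ${\cal A}(u,v) = {\cal F}(v)$ is being invoked for a test function $v = v_h$ lying in the finite-dimensional subspace, which is legitimate precisely because $V_h \subseteq V$. No regularity, ellipticity, or continuity assumptions on ${\cal A}$ are needed for this particular statement (they are needed for existence and uniqueness of $u$ and $u_h$, already established via Lax--Milgram in Theorem \ref{Lax-Milgram} and Lemma \ref{Lemma uniqueness}).
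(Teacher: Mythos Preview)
Your proof is correct and follows the same approach as the paper's own proof: invoke $V_h\subset V$ to test the continuous problem with $v_h$, subtract the discrete equation, and use linearity in the first slot of ${\cal A}$. In fact you are slightly more precise than the paper, which mislabels the step ${\cal A}(u,v_h)-{\cal A}(u_h,v_h)={\cal A}(u-u_h,v_h)$ as a consequence of ``$V$-ellipticity'' rather than bilinearity.
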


\begin{proof}
By subtracting formulas \eqref{Continuousproblem} and \eqref{gral Galerkin} and using the fact $V_h\subset V$, we get
\begin{eqnarray*}
{\cal A}(u,v)-{\cal A}(u_h,v) & = & {\cal F}(v)-{\cal F}(v), \quad\mbox{for all }v\in V_h\\
{\cal A}(u,v)-{\cal A}(u_h,v) & = & 0,\quad\mbox{for all }v\in V_h,
\end{eqnarray*}
by the $V$-ellipticity of the bilinear form ${\cal A}$ results
\[
{\cal A}(u-u_h,v)=0,\quad\mbox{for all }v\in V_h,
\]
which was to be shown.
\end{proof}

For more details about of the error estimates, see e.g. \cite{babuvska1971error,MR2373954,Solin-book2005}. We show the important result about the error estimate.

\begin{theorem}\label{(LemmaCea)}{\bf (C\'ea's lemma)}
Let $V$ be a Hilbert space ${\cal A}(\cdot,\cdot):V\times V\to\R$ a bilinear $V$-elliptic form  and  ${\cal F}\in V'$. Let $u\in V$ be the solution to the problem \eqref{Continuousproblem}. Also let $V_{h}$ be the subspace of $V$ and $u_{h}\in V_{h}$ the solution of the Galerkin approximation \eqref{gral Galerkin}. Let $\alpha,\beta$ be  constants of the continuity and $V$-ellipticity
of the bilinear form ${\cal A}(\cdot,\cdot)$. then
\[
\left\Vert u-u_{h}\right\Vert _{V}\leq\frac{\alpha}{\beta}\inf_{v_{h}\in V_{h}}\left\Vert u-v\right\Vert _{V}.
\]
\end{theorem}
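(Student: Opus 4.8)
The statement to prove is C\'ea's lemma: with the hypotheses of the Lax--Milgram theorem on ${\cal A}(\cdot,\cdot)$ and ${\cal F}$, the Galerkin solution $u_h \in V_h$ satisfies $\|u - u_h\|_V \leq (\alpha/\beta)\inf_{v_h \in V_h}\|u - v_h\|_V$.

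The plan is to combine the two ingredients already available: the Galerkin orthogonality relation \eqref{ortho-error}, namely ${\cal A}(u - u_h, v_h) = 0$ for all $v_h \in V_h$, and the continuity and ellipticity constants $\alpha, \beta$ of ${\cal A}$. First I would fix an arbitrary $v_h \in V_h$ and write $u - u_h = (u - v_h) + (v_h - u_h)$, noting that $v_h - u_h \in V_h$ since $V_h$ is a subspace. Then I would apply $V$-ellipticity to the error $e_h = u - u_h$:
\[
\beta \|u - u_h\|_V^2 \leq {\cal A}(u - u_h, u - u_h).
\]
Next, I would split the second argument as above and use Galerkin orthogonality to kill the term ${\cal A}(u - u_h, v_h - u_h) = 0$, leaving
\[
\beta \|u - u_h\|_V^2 \leq {\cal A}(u - u_h, u - v_h).
\]
Applying continuity of ${\cal A}$ to the right-hand side gives ${\cal A}(u - u_h, u - v_h) \leq \alpha \|u - u_h\|_V \|u - v_h\|_V$, so that
\[
\beta \|u - u_h\|_V^2 \leq \alpha \|u - u_h\|_V \|u - v_h\|_V.
\]
If $\|u - u_h\|_V = 0$ the estimate is trivial; otherwise I divide both sides by $\|u - u_h\|_V$ to obtain $\|u - u_h\|_V \leq (\alpha/\beta)\|u - v_h\|_V$. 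Since $v_h \in V_h$ was arbitrary, I take the infimum over $v_h \in V_h$ on the right to conclude.

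There is no real obstacle here — the proof is a short three-line computation once Galerkin orthogonality is in hand, and that orthogonality was already established in the lemma immediately preceding this theorem. The only points requiring a word of care are: verifying that $v_h - u_h$ genuinely lies in $V_h$ (so that orthogonality applies), and handling the degenerate case $\|u - u_h\|_V = 0$ separately before dividing. Both are routine. I would also remark in passing that this shows the Galerkin method is quasi-optimal: the error is, up to the constant $\alpha/\beta$, as good as the best approximation of $u$ from within $V_h$, which is the structural reason finite element approximations converge as the mesh is refined.
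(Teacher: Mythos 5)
Your proof is correct and follows essentially the same route as the paper: ellipticity applied to $u-u_h$, Galerkin orthogonality to replace the second argument by $u-v_h$, continuity, division by $\|u-u_h\|_V$, and an infimum over $v_h$. The only cosmetic difference is that you invoke the orthogonality relation \eqref{ortho-error} directly, whereas the paper re-derives it inline by subtracting the continuous and discrete variational equations; your handling of the degenerate case $\|u-u_h\|_V=0$ before dividing is a small point of care the paper omits.
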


\begin{proof}
Using the orthogonality given in \eqref{ortho-error}, we obtain
\begin{eqnarray*}
{\cal A}(u-u_{h},u-u_{h}) & = & {\cal A}(u-u_{h},u-v_{h})-{\cal A}(u-u_{h},u_{h}-v_{h})\\
& = & {\cal A}(u-u_h,u-v_h)-{\cal A}(u,u_h-v_h)+{\cal A}(u_h,u_h-v_h)\\
& = & {\cal A}(u-u_h,u-v_h)-{\cal F}(u_h-v_h)+{\cal F}(u_h-v_h)\\
& = & {\cal A}(u-u_{h},u-v_{h}),
\end{eqnarray*}
for any $v_{h}\in V_{h}$. By the ellipticity of the bilinear form ${\cal A}(\cdot,\cdot)$ we get
\begin{equation}\label{eq:1lemmaCea}
\beta\left\Vert u-u_{h}\right\Vert_{V}^{2}\leq {\cal A}(u-u_{h},u-u_{h}).
\end{equation}
The boundedness of ${\cal A}(\cdot,\cdot)$ yields
\begin{equation}\label{eq:2lemmaCea}
{\cal A}(u-u_{h},u-u_{h})\leq\alpha\left\Vert u-u_{h}\right\Vert _{V}\left\Vert u-v_{h}\right\Vert _{V},\,\mbox{ for all }v_h\in V_{h}.
\end{equation}
using conditions \eqref{eq:1lemmaCea} and \eqref{eq:2lemmaCea} we obtain
\[
\left\Vert u-u_{h}\right\Vert _{V}\leq\frac{\alpha}{\beta}\left\Vert u-v_{h}\right\Vert _{V},\mbox{ for all }v_h\in V_{h},
\]
which was what we wanted to show.
\end{proof}

\section{Convergence of Galerkin Approximation}

The convergence of the Galerkin  formulation constitutes the main result of this section. The proof of the following result uses ideas commonly used in the finite element methods and is a immediate consequence of the Theorem \ref{(LemmaCea)}. Then we have the next result.

\begin{theorem}
Let $V$ be a Hilbert space and $V_{1}\subset V_{2}\subset\cdots\subset V$ a subsequence of its finite dimensional subspaces such that
\begin{equation}\label{eq:sequencesVh}
\overline{\bigcup_{j=1}^{\infty}V_{h_j}}=V.
\end{equation}
Let ${\cal A}(\cdot,\cdot):V\times V$ be a $V$-elliptic bounded bilinear form and ${\cal F}\in V'$. Then
\[
\lim_{h\to\infty}\left\Vert u-u_{h}\right\Vert _{V}=0,
\]
i.e., the Galerkin formulation converges to the problem \eqref{Continuousproblem}.
\end{theorem}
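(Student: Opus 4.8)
The plan is to show that $\|u-u_h\|_V \to 0$ by combining C\'ea's lemma (Theorem \ref{(LemmaCea)}) with the density hypothesis \eqref{eq:sequencesVh}. The structure is the classical two-step argument: first reduce the error of the Galerkin solution to a best-approximation error via C\'ea, then use density to kill that best-approximation error. I would begin by recalling that, for each $h_j$, C\'ea's lemma gives the quasi-optimality estimate
\[
\|u-u_{h_j}\|_V \le \frac{\alpha}{\beta}\inf_{v\in V_{h_j}}\|u-v\|_V,
\]
where $\alpha,\beta$ are the continuity and ellipticity constants of ${\cal A}$, which are fixed and independent of $j$. So it suffices to prove that the right-hand side tends to zero as $j\to\infty$.

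For the density step, fix $u\in V$ and let $\varepsilon>0$. By the hypothesis \eqref{eq:sequencesVh} the union $\bigcup_{j\ge 1} V_{h_j}$ is dense in $V$, so there is an index $j_0$ and an element $w\in V_{h_{j_0}}$ with $\|u-w\|_V < \beta\varepsilon/\alpha$. Since the spaces are nested, $V_{h_{j_0}}\subset V_{h_j}$ for all $j\ge j_0$, hence $w\in V_{h_j}$ for every such $j$, and therefore
\[
\inf_{v\in V_{h_j}}\|u-v\|_V \le \|u-w\|_V < \frac{\beta\varepsilon}{\alpha}, \qquad j\ge j_0.
\]
Plugging this into C\'ea's estimate yields $\|u-u_{h_j}\|_V < \varepsilon$ for all $j\ge j_0$. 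Since $\varepsilon>0$ was arbitrary, $\lim_{j\to\infty}\|u-u_{h_j}\|_V = 0$, which is the claim (with the index $h_j$ playing the role of $h$ in the statement).

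I do not anticipate a serious obstacle here: the argument is routine once C\'ea's lemma is available, and C\'ea itself has already been proved in the excerpt. The only points requiring a little care are bookkeeping ones: making explicit that the constant $\alpha/\beta$ does not depend on the discretization level, and using the nestedness $V_1\subset V_2\subset\cdots$ to guarantee that the approximating element $w$ stays in all sufficiently fine spaces (this is what lets a single $w$ work for the whole tail of the sequence rather than producing a different approximant for each $j$). One should also note that existence and uniqueness of each $u_{h_j}$ is guaranteed by Lemma \ref{Lemma uniqueness}, so the quantities $\|u-u_{h_j}\|_V$ are all well defined. If one wanted to state the result for a general parameter $h\to 0$ rather than a sequence, the same proof applies verbatim provided the family of subspaces is monotone and its union is dense.
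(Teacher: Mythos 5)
Your proof is correct and follows essentially the same route as the paper: apply C\'ea's lemma to reduce the Galerkin error to the best-approximation error in $V_{h_j}$, then use the density hypothesis \eqref{eq:sequencesVh} to drive that error to zero. Your version is in fact slightly more careful than the paper's, since you make explicit that the nestedness $V_1\subset V_2\subset\cdots$ is what lets a single approximant $w$ serve for the entire tail of the sequence.
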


\begin{proof}
Let $\{h_{j}\}$ such that $h_{j+1}>h_j$ with $h_j\to 0$ as $j\to \infty$. Given the exact solution $u\in V$ of the problem \eqref{Continuousproblem}, from the equation \eqref{eq:sequencesVh} is possible to find a sequence  $\left\{ v_{j}\right\} _{j=1}^{N_{h}}$ such that $v_{j}\in V_{h_j}$ for all  $j=1,\dots$ and 
\begin{equation}\label{eq:lim vh}
\lim_{j\to\infty}\left\Vert u-v_{h_j}\right\Vert _{V}=0.
\end{equation}
From the Lemma \ref{Lemma  uniqueness} yields the existence and uniqueness of the solution $u_{h_j}\in V_{h_j}$ of the discrete problem \eqref{gral Galerkin} for all $h_j$. Then by the C\'ea's lemma we have
\[
\left\Vert u-u_{h_j}\right\Vert _{V}\leq\frac{\alpha}{\beta}\inf_{v\in V_{h_j}}\left\Vert u-v\right\Vert _{V}\leq\frac{\alpha}{\beta}\left\Vert u-v_{h_j}\right\Vert _{V},\mbox{ for all }j=1,2,\dots.
\]
From the equation \eqref{eq:lim vh} we conclude that
\[
\lim_{h\to\infty}\left\Vert u-u_{h}\right\Vert _{V}=0,
\]
that is what we wanted to show.
\end{proof}
For more details we refer to e.g., \cite{Galvis-book2011} and references there in.

\section{Finite Element Spaces}

In this section we consider piecewise linear functions as  finite element spaces. For this we introduce the next definition.

\subsection{Triangulation}

As above let $D\subset \R^d$ for $d=1,2,3$ be a polygonal domain. A triangulation (or mesh) ${\cal T}^{h}$, is a (finite) partition of $D$ in disjoint subsets of $D$ are called   elements, that is ${\cal T}^{h}=\left\{ K_{j}\right\} _{i=j}^{N_{h}}$ with
\[
\bigcup_{j=1}^{N_{h}}\overline{K}_{j}=\overline{D},\quad K_{i}\cap K_{j}=\varnothing,\,\mbox{ for }i\neq j,\,\mbox{ and }h=\max_{1\leq j\leq N_{h}}\diame(K_{j}).
\]
We say that a triangulation is geometrically conforming if the intersection of the closures of two different elements $\overline{K}_{i}\cap\overline{K}_{j}$ with $i\neq j$ is either a common side or a vertex  to the two elements, see Figure \ref{FigureC.1}.\\

\begin{figure}[htb]
\begin{centering}
\subfloat[Conforming]{\begin{centering}
\includegraphics[scale=0.3]{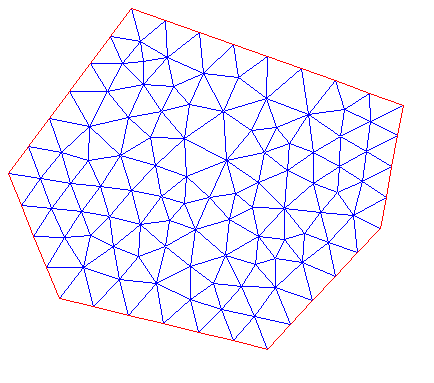}
\par\end{centering}

}\subfloat[Non-conforming]{\begin{centering}
\includegraphics[scale=0.3]{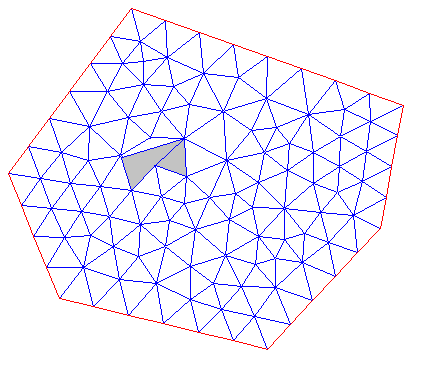}
\par\end{centering}

}
\par\end{centering}

\caption{Example of geometrically conforming or non-conforming triangulation.} \label{FigureC.1}
\end{figure}
For the next definition we refer \cite{Galvis-book2009,Galvis-book2011,toselli2005domain}. The reference triangle $\hat{K}$ in $\R^2$ is a triangle with vertices $(0,0),\,(0,1)$ and  $(1,0)$ or tetrahedrons $(0,0,0),\,(0,0,1)$ and $(0,1,0)$ in $\R^3$. In this part the triangulation is formed  by triangles that are images of the affine mapping from $\hat{K}$ onto an element $K$, i.e., for any $K$ of the triangulation there exists a mapping
$f_{K}:\hat{K}\to K$ is defined by
\[
f_{K}\left(\hat{x}\right)=\left[\begin{array}{cc}
b_{11}^{K} & b_{12}^{K}\\
b_{21}^{K} & b_{22}^{K}
\end{array}\right]\left[\begin{array}{c}
\hat{x}_{1}\\
\hat{x}_{2}
\end{array}\right]+\left[\begin{array}{c}
c_{1}^{K}\\
c_{2}^{K}
\end{array}\right].
\]
For each element $K\in{\cal T}^{h}$ defines the aspect ratio
\[
\rho\left(K\right)=\frac{\diame\left(K\right)}{r_{K}},
\]
where $r_{K}$ is the radius of the largest circle contained in $K$.\\

A family of triangulations $\left\{ {\cal T}^{h}\right\} _{h>0}$ it says of regular aspect if there exists a independent constant $C>0$  of $h$ such that $\rho(K)\leq C$ for all element $K\in{\cal T}^{h}$. This family is quasi-uniform if there exists a independent  constant $C$ $h$ such that $\diame\left(K\right)\geq Ch$ for all element $K\in{\cal T}^{h}$.

Given  a triangulation ${\cal T}^{h}$, let $N_{h}^{v}$ be the number of vertices of the triangulation. The triangulation vertices $\left\{ x_{i}\right\} $ are divided in boundary $\left\{ x_{i}\in\partial D\right\}$ and interior $\left\{ x_{i}\in D\right\}$. Furthermore, we define the space of linear continuous functions by parts associated to the triangulation ${\cal T}^{h}$
\[
\mathbb{P}^{1}\left({\cal T}^{h}\right)=\left\{ \begin{array}{ll}
 & \mbox{is the two variables polynomial}\\
v\in C(D), & \mbox{of total grade \ensuremath{1}for all element \ensuremath{K}}\\
 & \mbox{of the triangulation \ensuremath{{\cal T}^{h}}}
\end{array}\right\} 
\]
where $v|_{K}$ denotes the restriction of the function $v$ to the element $K$. We get the next result

\begin{lemma} 
A function $u:\Omega\to\R$ belongs to the space $H^{1}(D)$ if and only if the restriction of $u$ to every $K\in{\cal T}^{h}$ belongs to $H^{1}(K)$ and for each common face (or edge in two dimension)  $\overline{f}=\overline{K}_{1}\cap\overline{K}_{2}$
we have
\[
u|_{k_{1}}=u|_{K_{2}},\quad\mbox{ in }f.
\]
\end{lemma}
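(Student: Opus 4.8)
The plan is to prove both implications of the equivalence. First I would fix a function $u$ and a triangulation $\mathcal{T}^h$, and assume $u|_K \in H^1(K)$ for every element $K$ together with the matching condition $u|_{K_1} = u|_{K_2}$ along each common face $\overline{f} = \overline{K}_1 \cap \overline{K}_2$. The forward direction ($u \in H^1(D)$ implies both properties) is the easy one: restriction to a subdomain preserves membership in $H^1$, and the trace theorem (Theorem \ref{TheoremA.2}) applied on each $K$ shows the traces of $u|_{K_1}$ and $u|_{K_2}$ on the shared face agree, since both equal the trace of the globally $H^1$ function $u$.

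For the converse — the substantive direction — the key idea is to exhibit a candidate $L^2$ function that plays the role of the weak gradient of $u$ on all of $D$, namely the function $g$ whose restriction to the interior of each $K$ equals $\nabla(u|_K)$. Since there are finitely many elements and each $\nabla(u|_K) \in L^2(K)^d$, we have $g \in L^2(D)^d$. The main step is then to verify the defining identity of the weak derivative: for every test function $\varphi \in C_0^\infty(D)$,
\[
\int_D u\, \partial_i \varphi \, dx = -\int_D g_i\, \varphi\, dx.
\]
One splits both integrals as sums over the elements $K$, applies the integration-by-parts (Green's) formula on each $K$ separately — legitimate because $u|_K \in H^1(K)$ and $K$ has Lipschitz boundary — and collects the resulting boundary terms $\int_{\partial K} (u|_K)\, \varphi\, n_i\, dS$. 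The boundary of $D$ contributes nothing because $\varphi$ has compact support in $D$. Each interior face $f = \overline{K}_1 \cap \overline{K}_2$ appears twice, once from each adjacent element, with opposite outward normals; the matching condition $u|_{K_1} = u|_{K_2}$ on $f$ (in the trace sense) forces these two contributions to cancel. Hence all boundary terms vanish and the weak-derivative identity holds, so $u \in H^1(D)$.

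The main obstacle, and the point requiring the most care, is making the cancellation of face integrals rigorous at the level of traces rather than pointwise values: one must know that $\int_{\partial K} (u|_K)\varphi\, n_i\, dS$ depends only on the trace of $u|_K$ on $\partial D_K$, that this trace is well-defined in $H^{1/2}$ of each face (using that the faces themselves have the required regularity and that $\partial K$ is Lipschitz), and that the two traces from $K_1$ and $K_2$ genuinely coincide as elements of the face trace space. This is where Theorem \ref{TheoremA.2} and the Lipschitz assumption on the elements are essential. A secondary technical point is handling lower-dimensional intersections (vertices in 2D, edges and vertices in 3D), which have measure zero on the relevant faces and therefore do not affect the surface integrals; one notes that the union of all interior faces, counted with multiplicity two, exactly accounts for $\bigcup_K \partial K$ minus the part lying on $\partial D$, up to a set of surface measure zero.
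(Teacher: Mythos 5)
Your argument is correct: the forward direction by restriction plus agreement of traces, and the converse by assembling the candidate weak gradient elementwise and cancelling the interface boundary terms via Green's formula, opposite normals, and the matching of traces, is exactly the standard proof of this fact. Note that the paper itself gives no proof of this lemma — it only defers to the cited references (e.g.\ \cite{toselli2005domain,Galvis-book2011}) — and the argument you outline, including the care about traces on faces and the irrelevance of lower-dimensional intersections for a geometrically conforming triangulation, is precisely the one found there; the only implicit point worth stating is that $u\in L^2(D)$ follows immediately because there are finitely many elements.
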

The finite element spaces of piecewise linear continuous functions are therefore contained in $H^{1}(D)$. For more details of the triangulations and finite element spaces see, \cite{Galvis-book2011,toselli2005domain}.

\begin{lemma}
The set of piecewise linear function $\mathbb{P}^1\left({\cal T}^h\right)$ is subset of $H^1(D)$. The gradient of the linear function is a piecewise constant vector function in ${\cal T}^h$.
\end{lemma}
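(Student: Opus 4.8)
The plan is to treat the two assertions separately, disposing of the elementary one first. For the gradient claim: by the very definition of $\mathbb{P}^1({\cal T}^h)$, on each element $K\in{\cal T}^h$ the restriction $v|_K$ is an affine polynomial, say $v|_K(x)=a_K\cdot x+b_K$ with $a_K\in\R^d$ and $b_K\in\R$, whose classical gradient is the constant vector $a_K$. Hence the function defined by $x\mapsto a_K$ for $x\in K$ is well defined off the skeleton $\bigcup_j\partial K_j$ (a set of $d$-dimensional Lebesgue measure zero, so harmless to ignore) and is by construction constant on every element, i.e.\ piecewise constant on ${\cal T}^h$; this is the candidate for $\nabla v$.

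For the inclusion $\mathbb{P}^1({\cal T}^h)\subset H^1(D)$, I would invoke the characterization lemma stated just above, which says that $u\in H^1(D)$ if and only if $u|_K\in H^1(K)$ for every $K\in{\cal T}^h$ and $u|_{K_1}=u|_{K_2}$ on every common face $\overline f=\overline K_1\cap\overline K_2$. Given $v\in\mathbb{P}^1({\cal T}^h)$: first, $v|_K$ is a polynomial, hence lies in $C^\infty(\overline K)\subset H^1(K)$, so the first hypothesis holds; second, by definition $v\in C(D)$, so $v$ is single-valued on each interior face and the matching condition holds automatically. Therefore $v\in H^1(D)$, and inspecting the proof of that lemma shows its weak gradient is precisely the piecewise classical gradient $x\mapsto a_K$ identified above.

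If one prefers a self-contained argument not relying on that lemma, I would verify directly from the definition of weak derivative that $a:=(x\mapsto a_K$ on $K)$ satisfies $\int_D v\,\partial_i\varphi = -\int_D a_i\varphi$ for all $\varphi\in C_0^\infty(D)$ and each $i=1,\dots,d$. Writing $\int_D=\sum_j\int_{K_j}$ (a finite sum, so interchanges with integration are legitimate) and integrating by parts on each $K_j$, the volume terms produce $-\sum_j\int_{K_j}(\partial_i v)\varphi=-\int_D a_i\varphi$, while the boundary terms sum to $\sum_j\int_{\partial K_j} v\,\varphi\, n_i$, which one shows vanishes: each interior face is shared by exactly two elements with opposite outward normals, and since $v$ and $\varphi$ are continuous across it the two contributions cancel; faces lying on $\partial D$ contribute nothing because $\varphi$ has compact support in $D$. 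Hence $a$ is the weak gradient, $v,\nabla v\in L^2(D)$, so $v\in H^1(D)$.

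The main obstacle — indeed essentially the only nontrivial point — is this cancellation of the inter-element boundary terms: it is exactly where the hypothesis $v\in C(D)$ (global continuity, not merely piecewise smoothness) is used, and it is what rules out a spurious surface-measure (Dirac-type) contribution to the distributional gradient. The remaining points are bookkeeping: the skeleton has measure zero, the triangulation is finite so all sums are finite, and each $v|_K$ being a polynomial is trivially square integrable together with its gradient on the bounded domain $D$.
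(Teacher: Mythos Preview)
Your proposal is correct and matches the paper's approach: the paper states this lemma without a formal proof, the justification being the sentence immediately preceding it (``The finite element spaces of piecewise linear continuous functions are therefore contained in $H^{1}(D)$''), which is precisely your first argument via the characterization lemma. Your additional self-contained integration-by-parts verification goes beyond what the paper provides and is a welcome supplement.
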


\subsection{Interpolation Operator}

In this part we introduce the operator ${\cal I}^h$ which is denoted by
\[
{\cal I}^{h}:C(D)\to V^h.
\]
This operator associates each continuous function $f$ to a new function ${\cal I}^hf\in V^h$. In addition, the operator is useful tool in the Finite Element Method.\\

${\cal I}^h$ is defined as follows: let $v_i$, $i=1,2\dots,N^h$ be the set of vertices of the triangulation ${\cal T}^h$. Then, given a continuous function $f$, ${\cal I}^hf\in V^h$ and ${\cal T}^hf(v_i)=f(v_i)$ for $i=1,2,\dots,N^h$. Note that ${\cal I}^hf$ is a continuous function and is well defined.\\

According Cea's lemma, see Theorem \ref{(LemmaCea)}, allows us to limit the discretization error by $\alpha/\beta \inf_{v_h\in V_h}\|u-v\|_{V}$. We recall the result given in \cite{Galvis-book2011} to obtain an estimative of the error in function of $h$, i.e., the discretization error is ${\cal O}(h)^1$. We consider $v={\cal I}^hu$ in Cea's lemma and use the fact
\[
\inf_{v_h \in V_h}\|u-v\|_V\leq \|{\cal I}^hu-u\|_V.
\]
So we have the following result given in \cite{Galvis-book2011} for the case one or two dimensional.
\begin{lemma}
Let $D=(a,b)$ or $D\subset \R^2$ be a polygonal set. Given a family of triangulations quasi-uniform ${\cal T}^h$ of $D$, let $V_0^1$ be the piecewise linear function associated to ${\cal T}^h$. We have
\[
\|{\cal T}^hu-u\|_{V}\leq ch\|u\|_{H^2(D)},
\]
where $\|\cdot\|_{H^{2}(D)}$ is defined by
\[
\|f\|_{H^2(D)}=\left(\int_Df(x)^2+|\nabla f(x)|^2+\sum_{i,j=1}^{2}\left(\partial_{ij}f\right)^2\right)^{1/2}.
\]
\end{lemma}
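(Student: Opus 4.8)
The plan is to prove the bound elementwise and then assemble. First I would reduce to a local estimate: since $V$ is $H^1_0(D)$ (or $H^1(D)$) and both the $H^1$-norm and seminorm are additive over the elements of ${\cal T}^h$, it suffices to establish, for each $K\in{\cal T}^h$ with $h_K=\diame(K)$,
\[
\|{\cal I}^h u-u\|_{H^1(K)}\le c\,h_K\,\|u\|_{H^2(K)},
\]
and then to sum the squares and use $h_K\le h$ for all $K$. Observe that ${\cal I}^h u$ is well defined here because point values of $u$ make sense: in one dimension $H^2(a,b)$ embeds into $C^1$, and in two dimensions $H^2(D)$ embeds into $C(\overline D)$ by the Sobolev embedding theorem recalled in Appendix A.

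For the local estimate I would use a scaling argument together with a Bramble--Hilbert (quotient-space) lemma, which is exactly the abstract Poincar\'e-type inequality of Appendix A applied on the fixed reference element $\hat K$. Using the affine map $f_K:\hat K\to K$, $f_K(\hat x)=B_K\hat x+c_K$, introduced in the Triangulation section, I pull $u$ back to $\hat u=u\circ f_K$ on $\hat K$. Nodal interpolation commutes with affine pullback, so $({\cal I}^h u)\circ f_K=\hat{\cal I}\hat u$, where $\hat{\cal I}$ is the reference interpolant onto affine functions; since $\hat{\cal I}$ reproduces polynomials of degree $1$ and is bounded on $H^2(\hat K)$, the quotient-space argument on $\hat K$ gives $\|\hat u-\hat{\cal I}\hat u\|_{H^1(\hat K)}\le \hat c\,|\hat u|_{H^2(\hat K)}$. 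I then transfer this back to $K$ by the usual change-of-variables bounds for affine maps, which express the seminorms on $\hat K$ in terms of the seminorms on $K$ weighted by powers of $\|B_K\|$, $\|B_K^{-1}\|$ and $|\det B_K|$. Regularity of the family $\{{\cal T}^h\}$ (bounded aspect ratio $\rho(K)\le C$) controls $\|B_K\|\le c\,h_K$ and $\|B_K^{-1}\|\le c\,h_K^{-1}$ with a constant depending only on $C$ and $\hat K$, and combining these inequalities produces precisely the factor $h_K$ in front of $\|u\|_{H^2(K)}$.

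Finally I would assemble the global bound: summing the squared local estimates over $K\in{\cal T}^h$ yields $\|{\cal I}^h u-u\|_{H^1(D)}^2\le c\sum_K h_K^2\,\|u\|_{H^2(K)}^2\le c\,h^2\sum_K\|u\|_{H^2(K)}^2=c\,h^2\,\|u\|_{H^2(D)}^2$, and taking square roots gives the claim. In the one-dimensional case the argument collapses to an elementary one: on each subinterval $[x_i,x_{i+1}]$ the error $u-{\cal I}^h u$ vanishes at both endpoints, so Rolle's theorem together with a direct integration of $u''$ gives $\|u-{\cal I}^h u\|_{H^1(x_i,x_{i+1})}\le c\,h_i\,\|u''\|_{L^2(x_i,x_{i+1})}$, and one sums as above.

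The main obstacle is the scaling step: one must track the correct powers of the mesh size through the affine change of variables and verify that all constants depend only on the shape-regularity constant $C$ and on the reference element, not on $h$ or on the individual $K$. The Bramble--Hilbert lemma is the conceptual core, but since it is invoked once and for all on the fixed $\hat K$, the delicate bookkeeping lies entirely in the change of variables and in the use of quasi-uniformity; the rest is routine.
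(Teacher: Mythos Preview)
The paper does not actually prove this lemma: it is stated at the end of Appendix~C with the remark ``we have the following result given in \cite{Galvis-book2011}'' and no argument is supplied. So there is nothing in the paper to compare your proposal against.

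That said, your outline is the standard and correct proof: local estimate on each element via pullback to the reference element $\hat K$, Bramble--Hilbert on $\hat K$ using that the nodal interpolant reproduces $\mathbb{P}^1$, transfer back with the affine scaling bounds $\|B_K\|\le c\,h_K$ and $\|B_K^{-1}\|\le c\,h_K^{-1}$ controlled by shape regularity, and then sum in $L^2$ over the elements. One small remark: you only need shape regularity (bounded aspect ratio) for the constant to be uniform in $K$; the quasi-uniformity hypothesis in the statement is stronger than necessary for this particular estimate, and your argument correctly does not use it beyond $h_K\le h$.
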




\chapter{\textsc{MatLab} Codes} \label{AppendixD}

\minitoc

In this Appendix we present the code in \textsc{MatLab} using PDE toolbox with which we develop the examples in this manuscript.We note that this code (as well as others codes used), is based on a code developed by the authors of \cite{calo2014asymptotic}.\\

\lstset{basicstyle=\footnotesize\ttfamily,breaklines=true}
\lstset{framextopmargin=20pt,numbers=left, numberstyle=\tiny, stepnumber=1, numbersep=-9pt}

\begin{lstlisting}[frame=single]
  	% loading mesh and parameter information
	load newexample05.mat

	ibackground=1;

	NI=30;
	eta=1000;
	
	ref=3;
	for refaux=1:ref
    	[p,e,t]=refinemesh(g,p,e,t) ;

	end

	M=t(1:3,:)';
	vx=p(1,:);
	vy=p(2,:);
	%%%%%%%%%%%%%%%%%%%%%%%%%%%%%%%%%%%%%%
	%%%%%% matrix assemble and interior index
	[Kd,Fd,B,ud]=assempde(b,p,e,t,c,a,f);
	Ifreeg=B'*(1:size(B,1))';
	for i=1:length(Ifreeg)
   		Ifreeginv(Ifreeg(i))=i;
	end
	I=1:length(Fd);

	%%%%%%%%%%%%%%%%%%%%%%%%%%%%%%%%%%%%%%%%%%%%%%%%%5
	%%% extracting background subdomain info
	com=pdesdp(p,e,t);
	imax=max(t(4,:));
	[idb,cdb]=pdesdp(p,e,t,ibackground);

	%%%%%%%%%%%%%%%%%%%%%%%%%%%%%%%%%%%%%%5
	%%%% extracting inclusions meshes
	ido=[]; itinc=[];intido=[];
	basis=0;
	for i=1:imax
    	if i~=ibackground
        	basis=basis+1;
        	[idaux,cdo]=pdesdp(p,e,t,i);
        	ido=[ido,[idaux,cdo]];
        	intido=[intido,ido];
        	it=pdesdt(t,i);
       	itinc=[itinc,it];
    	end
	end
	ido=Ifreeginv(ido);
	intido=Ifreeginv(intido);
	Ifree=setdiff(I,ido);

	%%%%%%%%%%%%%%%%%%%%%%%%%%%%%%%%%%%%%%%%5
	%%%%% Neumann matrix and mass matrix for inclusions
	itbacground=pdesdt(t,ibackground);
	[Kaux,Faux,Baux,aux]=assempde(b,p,e,t(:,itinc),c,a,f);
	[Kaux2,Faux2,Baux2,aux2]=assempde(b,p,e,t(:,itbacground),c,a,f*0);
	[Massaux,Faux,Baux,aux]=assempde(b,p,e,t(:,itinc),0,1,f);
	mulaux=sum(Massaux,2);

	%%%%%%%%%%%%%%%%%%%%%%%%%%%%%%%%%%%%%%%%%%%%
	%%%%% harmonic characteristic functions, coarse matrix	and
	%%%%% lagrange multipliers for Neumann problems
	R=sparse(length(Fd),imax-1);
	basis=0;
	Mul=sparse(length(Fd),imax-1);
	for i1=1:imax
   		 if i1~=ibackground
        	basis=basis+1;
        	[id,cd]=pdesdp(p,e,t,i1);
        	id=Ifreeginv(id);
        	cd=Ifreeginv(cd);
        	u=Fd*0;
        	u(Ifree)=Kd(Ifree,Ifree)\(-Kd(Ifree,[id,cd])*([id,cd]'*0+1));
        	u([id,cd])=1;
        	R(:,basis)=u;
        	cvmul=mulaux*0;
        	cvmul([id,cd])=mulaux([id,cd]);
        	Mul(:,basis)=cvmul;
    	end
	end
	K0=R'*Kd*R;
	%%%%%%%%%%%%%%%%%%%%%%%%%%%%%%%%55
	%%%% computation of u00
	u00=Fd*0;
	u00(Ifree)=Kd(Ifree,Ifree)\(Fd(Ifree));

	%%%%%%%%%%%%%%%%%%%%%%%%%%%%%%%%%%
	%%%% computation of uc=ucoarse
	Fc=R'*(Fd-Kd*u00);
	uc=K0\Fc;
	u0=R*uc+u00;
	%%%%%%%%%%%%%%%%%%%%%%%%%%%%%%%%%%%%%%%%%%%%%%%%%%%%%%%%%5
	%%%%%%%%%%%%% computation of u1%%%%%%%%%%%%%%%%%%%%%%%5
	%%%%%%%%%%%%%%%%%%%%%%%%%%%%%%%%%%%%%%%%%%%%%%%%%%%%%%%%%5
	%%%%%%%%%%%%%%%%%%%%%%%%%%%%%%%%%%%%%%%%%%%%%%%%%%%%%%%%%5

	%%%%%%%%%%
	%%% Neumann problem in inclusions
	u1=Fd*0;
	Ku0=[Fd-Kd*u0;sparse(imax-1,1)];
	KauxMul=[Kaux,Mul;Mul',sparse(imax-1,imax-1)];
	u1aux=[u1;sparse(imax-1,1)];
	idofmul=[ido,length(Fd)+(1:(imax-1))];
	u1aux(idofmul)=KauxMul(idofmul,idofmul)\(Ku0(idofmul));
	u1=u1aux(1:length(Fd));

	%%%%%%%%%%%%%%%%%%%%%%%%%%%%%%%%%%
	%%%% Dirichlet problem on background

	u1tilde=Fd*0;
	u1tilde(Ifree)=Kd(Ifree,Ifree)\(-Kd(Ifree,:)*(u1));
	u1tilde(ido)=u1tilde(ido)+u1(ido);

	%%%%%%%%%%%%%%%%%%%%%%%%%%%%%%
	%%%% correction of constants for next compatibility condition
	%%%% with a coarse problem
	F1c=R'*(-Kd*u1tilde);
	u1c=K0\F1c;
	u1=u1tilde+R*u1c;

	Sol=sparse(length(Fd),NI);
	Sol(:,1)=u0;
	Sol(:,2)=u1;

	%%%%%%%%%%%%%%%%%%%%%%%%%%%%%%%%%%%%%%%%%%%%%%%%%%%
	%%%%%%%%%%%%% computation of the ui, i\geq 1

	uold=u1;
	for k=1:(NI-2)
	    %%%%%%%%%%%%%%%%%%%%%%
	%     uold(intido)=0;
	    %%% Neumann problems on inclusions
	    unew1=Fd*0;
	    unew1aux=[unew1;sparse(imax-1,1)];
	    Kuold=[Kaux2*uold;sparse(imax-1,1)];
	%    Kuold(intido)=0;
	    unew1aux(idofmul)=KauxMul(idofmul,idofmul)\(-Kuold(idofmul));
    	unew1=unew1aux(1:length(Fd));
    
	    %%%%%%%%%%%%%%%%%%%%%%%%%%%%%
	    %%%% dirichlet problem on background
	    unewtilde=Fd*0;    
	    unewtilde(Ifree)=Kd(Ifree,Ifree)\(-Kd(Ifree,ido)*(unew1(ido)));    
	    unewtilde(ido)=unewtilde(ido)+unew1(ido);
    
	    %%%%%%%%%%%%%%%%%%%%%%%%%%%%%%
	    %%%% correction of constants for next compatibility condition
	    %%%% with a coarse problem
	    Fnewc=-R'*(Kd*unewtilde);
	    unewc=K0\Fnewc;    
	    unew=unewtilde+R*unewc;
	    Sol(:,k+2)=unew;
	    uold=unew;
    
	end
	coef=1.00*(t(4,:)==ibackground)+eta*(t(4,:)~=ibackground);

	[Keta,F,B,ud]=assempde(b,p,e,t,coef,a,f);

	[KL,FL]=assempde(b,p,e,t,1,a,f);

	uetaaux=Keta\F;
	ueta=B*uetaaux+ud;

	Sol=B*Sol;
	Sol(:,1)=Sol(:,1)+ud;
	su=0;
	for k=1:NI
    	su=su+(1/eta)^(k-1)*Sol(:,k);
    	error(k)= sqrt((ueta-su)'*KL*(ueta-su));
 	         % % %  figure(1)
	    % % %
	 %   trisurf(M,vx,vy,ueta-su)
	 %   title(k)
	 %    view(2)
	 %    shading interp
	 %    colorbar
	 %    pause(0.2)
	    % %       hold off
	    % %       pause
	end

\end{lstlisting}


\bibliographystyle{amsplain}
\bibliography{MasterBib}
\end{document}